\newtheorem{theorem}{Theorem}[section]
\newtheorem{lemma}[theorem]{Lemma}
\newtheorem{proposition}[theorem]{Proposition}
\theoremstyle{definition}
\newtheorem{definition}[theorem]{Definition}
\theoremstyle{remark}
\newtheorem{remark}[theorem]{Remark}
\numberwithin{equation}{section}
\def\Real{\mathbb{R}}
\def\Phim{\mathrm{\Phi}}
\def\Gm{\mathrm{G}}
\def\Lm{\mathrm{L}}
\def\Pm{\mathrm{P}}
\def\bfzero{\mathit{\textbf{0}}}
\def\bff{\textit{\textbf{f}}}
\def\bfn{\textit{\textbf{n}}}
\def\bfu{\textit{\textbf{u}}}
\def\bfv{\textit{\textbf{v}}}
\def\bfw{\textit{\textbf{w}}}
\def\bfE{\textit{\textbf{E}}}
\def\bfL{\textit{\textbf{L}}}
\def\bfH{\textit{\textbf{H}}}
\def\bfM{\textit{\textbf{M}}}
\def\bfP{\textit{\textbf{P}}}
\def\bfV{\textit{\textbf{V}}}
\def\bfW{\textit{\textbf{W}}}
\def\uhat{\widehat{\bfu}}
\def\itPi{\mathit{\Pi}}
\def\uhat{\widehat{\bfu}}
\def\vhat{\widehat{\bfv}}
\def\what{\widehat{\bfw}}
\def\phihat{\widehat{\bfphi}}
\def\bfY{\boldsymbol{Y}}
\def\bfmu{\boldsymbol{\mu}}
\def\bftheta{\boldsymbol{\theta}}
\def\bftheta{\boldsymbol{\theta}}
\def\bfphi{\boldsymbol{\phi}}
\def\bfpsi{\boldsymbol{\psi}}
\def\calF{\mathcal{F}}
\def\calO{\mathcal{O}}
\def\calT{\mathcal{T}}
\def\calE{\mathcal{E}}
\def\nrm{|\hspace{-0.05cm}|\hspace{-0.05cm}|}
\begin{document}
% \title[short text for running head]{full title}
\title[A superconvergent  HDG for the incompressible Navier-Stokes equations]
{A superconvergent HDG method for the Incompressible\\ 
Navier-Stokes Equations on general polyhedral meshes}

%    Only \author and \address are required; other information is
%    optional.  Remove any unused author tags.

\author{Weifeng Qiu}
\address{Department of Mathematics, City University of Hong Kong,
83 Tat Chee Avenue, Kowloon, Hong Kong, China}
\email{weifeqiu@cityu.edu.hk}
\thanks{
Weifeng Qiu is partially supported by a grant from the Research Grants 
Council of the Hong Kong Special Administrative Region, China 
(Project No. CityU 11302014). As a convention the names of the authors are alphabetically ordered. Both authors contributed equally in this article. 
}

%    author two information
\author{Ke Shi}
\address{Department of Mathematics $\&$ Statistics, Old Dominion University, Norfolk, VA 23529, USA}
\curraddr{}
\email{kshi@odu.edu}
%\thanks{The second author was supported in part by the National Science
%  Foundation through DMS Grant 1115331.}
% author three information

%\tableofcontents
%    \subjclass is required.
\subjclass[2010]{65N30}
\keywords{Discontinuous Galerkin, hybridization, Navier-Stokes equations, 
superconvergence, general polyhedral mesh}

\begin{abstract}
We present a superconvergent hybridizable discontinuous Galerkin (HDG) method for the steady-state 
incompressible Navier-Stokes equations on general polyhedral meshes. For arbitrary conforming polyhedral mesh, we use polynomials of degree $k + 1$, $k$, $k$ to approximate 
the velocity, velocity gradient and pressure, respectively. In contrast, we only use polynomials of degree $k$ 
to approximate the numerical trace of the velocity on the interfaces. Since the numerical trace of the velocity 
field is the only globally coupled unknown, this scheme allows a very efficient implementation of the method.
For the stationary case, and under the usual smallness condition for the source term, 
we prove that the method is well defined and that the global $L^{2}$-norm of the error in each of 
the above-mentioned variables and the discrete $H^{1}$-norm of the error in the velocity
 converge with the order of $k + 1$ for $k \geq 0$. We also show that for $k\geq 1$, 
the global $L^{2}$-norm of the error in velocity converges with the order of $k+2$. 
From the point of view of degrees of freedom of the globally coupled unknown: numerical trace, 
this method achieves optimal convergence for all the above-mentioned variables in $L^{2}$-norm for $k\geq 0$, 
superconvergence for  the velocity in the discrete $H^{1}$-norm without postprocessing for $k\geq 0$,   
and superconvergence for  the velocity in $L^{2}$-norm without postprocessing for $k\geq 1$. \end{abstract}

\maketitle

\section{Introduction}

In this paper, we consider a new hybridizable discontinuous Galerkin (HDG) method for the steady-state 
incompressible Navier-Stokes equations, which can be written as the following first order system:
\begin{subequations}\label{eq:NSE}
\begin{eqnarray}
 \Lm=\nabla \bfu&&\mbox{ in }\Omega,\label{eq:NSE1}\\
 -\nu\nabla\cdot \Lm+\nabla\cdot(\bfu\otimes\bfu)+\nabla p=\bff &&\mbox{ in }\Omega,\label{eq:NSE2}\\
 \nabla \cdot \bfu =0 &&\mbox{ in }\Omega,\label{eq:NSE3}\\
 \bfu=\bfzero &&\mbox{ on }\partial\Omega,\label{eq:NSE4}\\
 \int_{\Omega}p=0,\label{eq:NSE5}
\end{eqnarray}
where the unknowns are the velocity $\bfu$ , the pressure $p$, and the gradient of the velocity $\Lm$. $\nu$ is 
the kinematic viscosity and $\bff\in \boldsymbol{L}^2(\Omega)$ is 
the external body force. The domain $\Omega\subset \Real^d$, $d= 2, 3$ is polygonal ($d=2$) or polyhedral ($d=3$).
\end{subequations}

To define the HDG method, we adopt the notations and norms used in \cite{CCQ2015}. 
We consider conforming triangulation $\calT_h$ of $\Omega$ made of shape-regular \emph{polyhedral elements} which can be non-convex. 
We denote by $\calE_h$ the set of all faces $F$ of all elements $K\in \calT_h$ 
and set $\partial \calT_h:=\{\partial K:K\in \calT_h\}$.
For scalar-valued functions $\phi$ and $\psi$, we write
\[
 (\phi,\psi)_{\calT_h}:=\sum_{K\in\calT_h}(\phi,\psi)_K, \, \, \langle\phi,\psi\rangle_{\partial\calT_h}
 :=\sum_{K\in\calT_h}\langle\phi,\psi \rangle_{\partial K}.
\]
Here $(\cdot,\cdot)_D$ denotes the integral over
the domain $D\subset \Real^d$, and $\langle \cdot,\cdot \rangle_D$ denotes the
integral over $D\subset \Real^{d-1}$. For vector-valued and
matrix-valued functions, a similar notation is taken. For example, for
vector-valued functions, we write
$(\bfphi,\bfpsi)_{\calT_h}:=\sum_{i=1}^n(\phi_i,\psi_i)_{\calT_h}$. 
For matrix-valued functions, we write 
$(\phi, \psi)_{\calT_h}:= \sum_{1\leq i,j\leq n}(\phi_{ij}, \psi_{ij})_{\calT_h}$.

Like all other HDG schemes, to define the HDG method for 
the problem, we introduce an additional unknown \emph{numerical trace} which is the approximation of the velocity on 
the skeleton of the mesh. Namely, our HDG method seeks an approximation 
$(\Lm_h, \bfu_h, p_h,\uhat_h)\in \Gm_h\times \bfV_h\times Q_h\times
\bfM_h^{\,0}$ to the exact solution 
$(\Lm|_{\mathcal{T}_h}, \bfu|_{\mathcal{T}_h}, p|_{\mathcal{T}_h},\bfu|_{\mathcal{E}_h})$
where the finite dimensional spaces are defined as:
\begin{alignat*}{3}
 \Gm_h&:=\{\Gm\in {\mathrm{L}}^2(\Omega):&&\;\Gm|_K\in \Pm_k(K), &&\,\, \forall K\in \calT_h\},\\
 \bfV_h&:=\{\bfv\in \boldsymbol{L}^2(\Omega):&&\;\bfv|_K\in \bfP_{k+1}(K), &&\,\, \forall K\in \calT_h\},\\
 Q_h&:=\{p\in L_{0}^2(\Omega):&&\;p|_K\in P_k(K), &&\,\, \forall K\in \calT_h\},\\
 \bfM_h&:=\{\bfmu\in \boldsymbol{L}^2(\calE_h):\;&&\bfmu|_F\in \bfP_k(F), &&\,\, \forall F\in \calE_h\},\\
 \bfM_h^{\,0}&:=\{\bfmu\in \bfM_h:\;&&\bfmu|_{\partial \Omega}=0\}.
\end{alignat*}
Here $P_l(D)$ denotes the set of polynomials of total degree at most $l \geq 0$
defined on $D$, $\bfP_k(D)$ denotes the set of vector-valued functions whose $d$
components lie in $P_k(D)$, $\Pm_k(D)$ denotes the set of square
matrix-valued functions whose $d\times d$ entries also lie in $P_k(D)$, 
and $L_{0}^{2}(\Omega) = \{p\in L^{2}(\Omega): \int_{\Omega}p = 0\}$.

The method determines the approximate solution by requiring that it solves the
following weak formulation:
\begin{subequations}\label{eq:NSEHDG}
\begin{align}
 (\Lm_h, \Gm)_{\calT_h}+(\bfu_h, \nabla \cdot \Gm)_{\calT_h}-\langle \uhat_h, 
 \Gm\bfn\rangle_{\partial \calT_h}&=0,\label{eq:NSEHDGI}\\
(\nu\Lm_h, \nabla\bfv)_{\calT_h}-(\bfu_h\otimes \bfu_h ,\nabla \bfv)_{\calT_h}
-(p_h, \nabla \cdot\bfv)_{\calT_h}\label{eq:NSEHDGII}
-\langle \nu\,\widehat{\Lm}_h\bfn-\widehat{p}_h\bfn-(\widehat{\bfu}_h\otimes \uhat_h)\bfn,  
\bfv\rangle_{\partial\calT_h}& \\
\nonumber
-  ( \frac12 (\nabla \cdot \bfu_h) \, \bfu_h, \bfv)_{\calT_h} + \langle \frac12 
(\bfu_h \otimes (\bfu_h - \widehat{\bfu}_h)) \bfn, \bfv \rangle_{\partial \calT_h} 
&=(\bff, \bfv)_{\calT_h} ,\\
 -(\bfu_h, \nabla q)_{\calT_h}+\langle\uhat_h\cdot \bfn,q
 \rangle_{\partial\calT_h}&=0,\label{eq:NSEHDGIII}\\
 \langle \nu\,\widehat{\Lm}_h\bfn-\widehat{p}_h\bfn -(\widehat{\bfu}_h
 \otimes \widehat{\bfu}_h)\bfn, 
 \bfmu\rangle_{\partial \calT_h}&=0\label{eq:NSEHDGIV},
\end{align}
for all $(\Gm,\bfv, q,\bfmu)\in \Gm_h\times \bfV_h\times Q_h\times
\bfM_h^{\,0}$. Here,
\begin{alignat}{2}\label{numerical_flux}
(\nu\,\widehat{\Lm}_h-\widehat{p}_h)\bfn:=&\;\nu\Lm_h\bfn-p_h\bfn 
- \frac{\nu}{h}(\itPi_M \bfu_h-\uhat_h) - \tau_C(\uhat_h) (\bfu_h -\uhat_h)
&&\quad\mbox{ on }\partial\mathcal{T}_h,
\\
\label{stab_op}
\tau_C(\uhat_h):=&\;\max(\uhat_h \cdot \bfn,0) 
&&\quad\mbox{ on each  } F \in \partial\mathcal{T}_h.
\end{alignat}
\end{subequations}
Here $\itPi_M$ is the $L^2-$projection onto $\bfM_h$. The goal of  this paper is to consider the 
analytical aspects of the method including the rigorous proof of the uniqueness and existence 
of the solution of the above system and the error estimates for all unknowns. The 
computational aspects of the method will be discussed in a separate paper.

Like other HDG schemes, the HDG method (\ref{eq:NSEHDG}) uses the numerical trace 
of the primary variable  $\uhat_h$ 
as the only globally-coupled variable. Our formulation is close to that 
of the HDG method in \cite{CCQ2015, NPCockburn2011}, in which they have 
the same global degrees of freedom  $\uhat_h$. 
Nevertheless, there are three crucial differences which lead to special properties of 
our HDG method. Firstly, our method uses $\bfP_{k+1}(\mathcal{T}_{h})$ 
to approximate the primary variable, which is the velocity, on each element 
while methods in \cite{CCQ2015, NPCockburn2011} uses $\bfP_{k}(\mathcal{T}_{h})$ 
instead. Secondly, the diffusion part of the stabilization function 
$\frac{\nu}{h}(\itPi_M \bfu_h-\uhat_h)$ in (\ref{numerical_flux}) 
is totally different from those used in \cite{CCQ2015, NPCockburn2011}. Finally, 
motivated by the work in \cite{WalugaThesis} and \cite{CockburnKanschatSchoetzauNS05},
we insert two terms $- (\frac12 (\nabla \cdot \bfu_h) \, 
\bfu_h, \bfv)_{\calT_h}$ and $\langle \frac12 (\bfu_h \otimes 
(\bfu_h - \widehat{\bfu}_h)) \bfn, \bfv \rangle_{\partial \calT_h}$ 
in \eqref{eq:NSEHDGII}. As a consequence of the above modifications, the new HDG method
allows to use general polygonal mesh with optimal approximations for all unknowns. In addition, 
from the implementation point of view, like many other numerical methods for Navier-Stokes equation,
we apply the classical Picard iteration to obtain the numerical solution. In each iteration we need to solve
an Oseen equation. Due to our modification in \eqref{eq:NSEHDGII}, we can use the convection field 
obtained from previous step directly without the use of postprocessing. We notice that by adding these 
two terms in \eqref{eq:NSEHDGII}, the HDG method 
(\ref{eq:NSEHDG}) is not locally conservative 
(see \cite{CockburnKanschatSchoetzauNS05} 
for detailed explanation). On the other hand, since 
the HDG method (\ref{eq:NSEHDG}) has high order 
accuracy for the approximations to all variables, 
lack of being locally conservative is only a minor issue. 

It is worth to mention that the HDG method using an enhanced space for 
the primary variable and the special stabilization function like 
$\frac{\nu}{h}(\itPi_M \bfu_h-\uhat_h)$ was first introduced by Lehrenfeld in 
Remark 1.2.4 for diffusion problem in \cite{Lehrenfeld}. He numerically showed 
that the methods provide optimal order of convergence for all unknowns without analysis.
In \cite{QiuShi2015a}, we gave rigorous analysis for this approach for linear elasticity problems. 
Optimal order of convergence for all unknowns is obtained for both equations. 
In \cite{Oikawa14_diffusion}, Oikawa analyzed a HDG method for diffusion problem which 
uses the same polynomial spaces as in \cite{Lehrenfeld}, with a different choice of 
the numerical flux, he proved the optimality of the method for all unknowns. 
Since the polynomial order of the numerical trace of these HDG methods 
\cite{Lehrenfeld,QiuShi2015a,Oikawa14_diffusion} is one less then 
that of the approximation space of the primary variable, from the point of view of degrees 
of freedom of the globally coupled unknown, they obtain superconvergence for the 
primary variable without postprocessing. In addition, all these methods work on 
general polyhedral meshes. However, the standard stability analysis for these methods 
can only provide the upper bound of 
\begin{align*}
\big(\Vert \nabla\bfu_h\Vert_{\mathcal{T}_{h}}^{2}+h^{-1}
\Vert \itPi_M \bfu_h-\uhat_h\Vert^2_{\partial\mathcal{T}_{h}}\big)^{1/2},
\end{align*}
which can not control the standard discrete $H^{1}$-norm of $\bfu_h$.
We would like to emphasize that the control of the standard discrete 
$H^{1}$-norm of $\bfu_h$ is essentially necessary in the proof of 
the HDG method (\ref{eq:NSEHDG}) having a unique solution. 
In \cite{QiuShi2015b}, roughly speaking, we prove that 
\begin{align}
\label{magic_ineq}
\big(\Vert \nabla\bfu_h\Vert_{\mathcal{T}_{h}}^{2}+h^{-1}
\Vert \bfu_h-\uhat_h\Vert^2_{\partial\mathcal{T}_{h}}\big)^{1/2}
\leq C \big(\Vert \nabla\bfu_h\Vert_{\mathcal{T}_{h}}^{2}+h^{-1}
\Vert \itPi_M \bfu_h-\uhat_h\Vert^2_{\partial\mathcal{T}_{h}}\big)^{1/2}.
\end{align}
The inequality (\ref{magic_ineq}) enables us to control the standard discrete 
$H^{1}$-norm of $\bfu_h$ of the HDG method (\ref{eq:NSEHDG}) such that 
we can show our method has a unique solution under the usual smallness condition 
for the source term.

In literature, there are many existing mixed and DG methods designed for Navier-Stokes equations. 
See the classic mixed methods \cite{GiraultRaviart86,BrezziFortin91,Fortin93}, the stabilized methods 
proposed in \cite{HughesFrancaBalestraV86,HughesFrancaCFDVII87,KechkarSilvester92} and the DG methods 
\cite{BakerJureidiniKarakashian90,Karakashian,CockburnKanschatSchoetzauSchwabStokes02,GiraultRiviereWheelerNavierStokes05,
Toselli02,ShoetzauSchwabToselli03,CarreroCockburnSchoetzau06,CockburnKanschatSchoetzauNS05,CockburnKanschatSchoetzauDIV07,
 CKS2009,MontlaurFernandezHuertaDGHdiv08}. An IP-like method and a compact discontinuous Galerkin (CDG) 
 method were introduced in \cite{montlaur2010discontinuous}. Recently, 
 In \cite{CaiWangZhang2010}, a mixed method is developed 
based on the pseudostress-velocity formulation. 
This method uses row-wise $k$-th order RT space and  $\bfP_{k}(\mathcal{T}_{h})$ 
to approximate the pseudostress and velocity, respectively. 
The method provides the convergence of both variables in $L^{3}$ norm of order ${k+1-\frac{d}{6}}$.  
In \cite{CaiZhang2012}, a modified method is introduced and 
it approximates the pressure directly with same convergence rate as in \cite{CaiWangZhang2010}.
Later in \cite{HowellWalkington2013}, a new mixed finite element method is introduced 
in which the stress is the primary variable. This method uses 
$\Pm_{k}(\mathcal{T}_{h})$, row-wise $k$-th order RT space 
and $\bfP_{k}(\mathcal{T}_{h})$ to approximate the velocity 
gradient, stress and velocity, respectively. 
The convergence of the velocity gradient and velocity in $L^{2}$-norm 
and the stress in $H(\text{div})$-norm is of order ${k}$. More recently, in 2015, Cockburn et al \cite{CCQ2015} gave an error analysis of the  
HDG method developed in \cite{NPCockburn2011} which is  close to method in this paper. Our method may be criticized by the fact that with the modification of the scheme, we no longer have the local conservation of the momentum. 
Nevertheless, our approach has several advantages comparing with the one in  \cite{CCQ2015, NPCockburn2011}. 
For instance, the analysis in \cite{CCQ2015, NPCockburn2011} is only valid for simplicical meshes and it needs a postprocessing 
procedure to obtain superconvergent approximation to the velocity. 
The method  in \cite{CCQ2015, NPCockburn2011} does not have superconvergent approximation to 
the velocity in the discrete $H^{1}$-norm if $k=0$, while our method does even this the lowest order case.
From the implementation point of view, in each iteration, 
the scheme in \cite{CCQ2015, NPCockburn2011} needs to solve a Oseen equation using a postprocessed convection field from the previous iteration while in our scheme we can use the velocity field directly from the previous iteration without any postprocessing.

In this paper, we prove that the discrete $H^{1}$-norm of the error in the velocity,
the $L^2$-norm of the error in the velocity, the pressure and even in the velocity 
gradient converge with the order $k+1$ for any $k\ge0$; and that the velocity, 
for $k\geq 1$, converges with order $k + 2$. Notice that as a built-in feature of 
HDG methods, see \cite{CockburnQiuShi11}, the degrees of freedom of the globally-coupled unknown 
comes from the numerical trace of the velocity on the mesh skeleton. 
From the point of view of the global degrees of freedom, the method provides 
optimal convergent approximations to the velocity, velocity gradient and pressure in $L^{2}$-norm 
for $k\geq 0$, superconvergent approximation to the velocity in the discrete $H^{1}$-norm without 
postprocessing for $k\geq 1$, and superconvergent approximation to the velocity in $L^{2}$ norm without 
postprocessing for $k\geq 1$. In addition, the analysis of our method is valid for general polyhedral meshes. 
To the best of our knowledge, no other known finite element method for the Navier-Stokes equations has all of these properties. 

The rest of paper is organized as follows. In Section 2, we introduce our HDG method for the problem and present the main 
a priori error estimates. In Section 3, we present some preliminary inequalities and stability estimates. In Section 4, we prove 
the existence and uniqueness of the numerical solution. In Section 5, we provide the detailed proof of the main results.

\section{Main Results}

In this section, we present the main error estimates results.
To state the main results, we need to introduce some notations.
We use the standard definitions for the Sobolev spaces $W^{\ell,p}(D)$ 
for a given domain $D$ with norm 
\[
 \|\phi\|_{\ell,p,D}=(\sum_{|\alpha|\leq \ell}\|D^{\alpha}\phi\|_{0,p,D}^p)^{1/p}.
\] 
For vector- and matrix-valued functions $\bfphi$ and $\Phim$, we use $
 \|\bfphi\|_{\ell,p,D}=\sum_{i=1}^d\|\bfphi_i\|_{\ell,p,D}$, and
 $\|\Phim\|_{\ell,p,D}=\sum_{i,j=1}^d\|\Phim_{ij}\|_{\ell,p,D}.$ Moreover, 
when $p=2$ and $\ell<\infty$, we denote $W^{\ell,2}(D)$ by $H^\ell(D)$ and $\|\cdot\|_{\ell,2,D}$, 
by $\|\cdot\|_{\ell,D}$. When $l = 0,$ we denote $W^{0,p}(D)$ by $L^p(D)$ and the norm by $\|\cdot\|_{L^p(D)}$, 
when $\ell=0$ and $p=2$, we denote the $L^2(D)$ norm by $\|\cdot\|_D$. 

We also introduce the following norms and seminorms:
\begin{alignat*}{2}
\nrm(\bfv,\bfmu)\nrm_{0,h}
&:=\Big(\|\bfv\|_{\calT_h}^2+ \,(\| h_{K}^{1/2} \bfmu\|^2_{\partial \calT_h}+\| h_{K}^{1/2}(\bfv-\bfmu)\|_{\partial \calT_h}^2)\Big)^{1/2}
&&\quad\mbox{$\forall\;(\bfv,\bfmu)$ in $\boldsymbol{H}^1(\mathcal{T}_h)\times
\boldsymbol{L}^2(\mathcal{E}_h)$},
\\
\nrm(\bfv,\bfmu)\nrm_{1,h}
&:=\Big(\|\nabla \bfv\|_{\calT_h}^2+ \|h_{K}^{-1/2} (\bfv-\bfmu)\|_{\partial \calT_h}^2\Big)^{1/2}
&&\quad\mbox{$\forall\;(\bfv,\bfmu)$ in $\boldsymbol{H}^1(\mathcal{T}_h)\times
\boldsymbol{L}^2(\mathcal{E}_h)$},
\\
\nrm(\bfv,\bfmu)\nrm_{\infty,h}
&:=\|\bfv\|_{L^\infty(\Omega)}+\|\bfmu\|_{L^\infty(\mathcal{E}_h)}
&&\quad\mbox{$\forall\;(\bfv,\bfmu)$ in $\boldsymbol{L}^\infty(\Omega)\times
\boldsymbol{L}^\infty(\mathcal{E}_h)$}.
\end{alignat*}
Here $\|\cdot\|_{\partial \mathcal{T}_h} := \big( \sum_{K \in \mathcal{T}_h} \|\cdot\|_{\partial K}^{2}\big)^{1/2}$.
We also set 
\[
\|\bfv\|_{0,h}:=\Vert \bfv\Vert_{L^{2}(\Omega)},\quad \|\bfv\|_{1,h}:=\nrm(\bfv,\{\!\!\{ \bfv \}\!\!\})\nrm_{1,h},
\]
where the average of $\bfv$, $\{\!\!\{ \bfv \}\!\!\}$, is defined as follows: 
On an interior face $F=\partial K^-\cap \partial K^+$, we have
$
\{\!\!\{ \bfv\}\!\!\}:=\frac12(\bfv^+ + \bfv^-),
$
where $\bfv^\pm$ denote the trace of $\bfv$ from the interior of
$K^\pm$ and $\boldsymbol{n}^\pm$ is the outward unit normal to $K^\pm$. On 
a boundary face $F\subset\partial K^{-}\cap \partial \Omega$, we formally set 
$\bfv^+:=\bfv$ such that $\{\!\!\{ \bfv\}\!\!\} = \bfv$ on $\partial \Omega$. 
We note that $\|\cdot\|_{1,h}$ is the standard discrete $H^{1}$-seminorm.

We are now ready to state our first main result on the existence and uniqueness of the numerical solution. 
\begin{theorem}[Existence, uniqueness and stability]\label{thm:existence}
If $\|\bff\|_{\Omega}$ is small enough,
the HDG method \eqref{eq:NSEHDG} has a unique solution $(\Lm,
\bfu_h,p_h,\uhat_h)\in \Gm_h\times \bfV_h\times Q_h\times \boldsymbol{M}_h^{\,0}$. 
Furthermore, the following stability bound is satisfied
\begin{align}\label{stab}
\nrm(\bfu_h,\uhat_h)\nrm_{1,h}&\leq C\nu^{-1}\|\bff\|_{\Omega}.
%\\
%\|p_h\|_{\Omega}&\leq \gamma^{-1}C_2(C_{\rm HDG}^2(CC_A+1)+2)\|\bff\|_{\Omega}.\label{eq:pstab}
\end{align}
for some constant $C$ independent of $\nu$, the discretization parameters and the exact solution.
\end{theorem}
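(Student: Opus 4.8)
The plan is to realize the nonlinear scheme \eqref{eq:NSEHDG} as a fixed point of its Picard (Oseen) linearization, derive the a priori bound \eqref{stab} through an energy identity, invoke Brouwer's theorem for existence (for arbitrary $\bff$), and establish uniqueness by a smallness argument. \textbf{Step 1 (energy identity).} Given a frozen convection field $(\bfw,\what)\in\bfV_h\times\bfM_h^{\,0}$, let $(\Lm_h,\bfu_h,p_h,\uhat_h)$ solve the linear problem obtained from \eqref{eq:NSEHDG} by replacing the leftmost factor of each convective term ($\bfu_h\otimes\bfu_h\rightsquigarrow\bfw\otimes\bfu_h$, $\uhat_h\otimes\uhat_h\rightsquigarrow\what\otimes\uhat_h$, $\tfrac12(\nabla\cdot\bfu_h)\bfu_h\rightsquigarrow\tfrac12(\nabla\cdot\bfw)\bfu_h$, $\tfrac12(\bfu_h\otimes(\bfu_h-\uhat_h))\rightsquigarrow\tfrac12(\bfw\otimes(\bfu_h-\uhat_h))$) and replacing $\tau_C(\uhat_h)$ by $\tau_C(\what)$. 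Testing with $(\Gm,\bfv,q,\bfmu)=(\nu\Lm_h,\bfu_h,p_h,\uhat_h)$, combining \eqref{eq:NSEHDGI}--\eqref{eq:NSEHDGIV}, and using \eqref{numerical_flux} together with the projection identity $\langle\itPi_M\bfu_h-\uhat_h,\bfu_h-\uhat_h\rangle_{\partial\calT_h}=\|\itPi_M\bfu_h-\uhat_h\|_{\partial\calT_h}^2$, I would verify
\[
 \nu\|\Lm_h\|_{\calT_h}^2+\tfrac{\nu}{h}\|\itPi_M\bfu_h-\uhat_h\|_{\partial\calT_h}^2+\tfrac12\big\langle|\what\cdot\bfn|\,|\bfu_h-\uhat_h|^2\big\rangle_{\partial\calT_h}=(\bff,\bfu_h)_{\calT_h}.
\]
Here the pressure terms cancel because \eqref{eq:NSEHDGIII} with $q=p_h$ gives $(p_h,\nabla\cdot\bfu_h)_{\calT_h}=\langle p_h,(\bfu_h-\uhat_h)\cdot\bfn\rangle_{\partial\calT_h}$; the convective terms telescope, via $-\tfrac12|\bfu_h|^2+\uhat_h\cdot\bfu_h-|\uhat_h|^2=-\tfrac12|\bfu_h-\uhat_h|^2-\tfrac12|\uhat_h|^2$ and the single-valuedness of $\what,\uhat_h$ on interior faces (so $\langle(\what\cdot\bfn)|\uhat_h|^2\rangle_{\partial\calT_h}=0$), into $-\tfrac12\langle(\what\cdot\bfn)|\bfu_h-\uhat_h|^2\rangle_{\partial\calT_h}$, which combines with $\|\tau_C(\what)^{1/2}(\bfu_h-\uhat_h)\|_{\partial\calT_h}^2$ thanks to $\max(a,0)-\tfrac12a=\tfrac12|a|$. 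This is precisely the discrete Temam skew-symmetry that the two extra terms of \eqref{eq:NSEHDGII} and the choice $\tau_C=\max(\,\cdot\,,0)$ are engineered to create.

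\textbf{Step 2 (a priori bound, self-map, Brouwer).} Estimating the right-hand side by $\|\bff\|_{\Omega}\|\bfu_h\|_{\Omega}$, using a discrete Poincar\'e inequality $\|\bfu_h\|_{\Omega}\le C\nrm(\bfu_h,\uhat_h)\nrm_{1,h}$ (available since $\uhat_h\in\bfM_h^{\,0}$), and combining the identity $(\Lm_h-\nabla\bfu_h,\Gm)_{\calT_h}=-\langle\itPi_M\bfu_h-\uhat_h,\Gm\bfn\rangle_{\partial\calT_h}$ from \eqref{eq:NSEHDGI} with \eqref{magic_ineq} --- which together yield $\nu\nrm(\bfu_h,\uhat_h)\nrm_{1,h}^2\le C(\nu\|\Lm_h\|_{\calT_h}^2+\tfrac{\nu}{h}\|\itPi_M\bfu_h-\uhat_h\|_{\partial\calT_h}^2)$ --- I obtain $\nrm(\bfu_h,\uhat_h)\nrm_{1,h}\le C\nu^{-1}\|\bff\|_{\Omega}=:R$, uniformly in $(\bfw,\what)$. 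Running the same computation with $\bff=0$ shows the linear problem has only the trivial solution (the residual equations then also force $\Lm_h=0$ and $p_h=0$, cf.\ Step 3); being square, it is uniquely solvable, so $(\bfw,\what)\mapsto(\bfu_h,\uhat_h)$ defines a continuous map $T$ --- its matrix depends continuously on $(\bfw,\what)$, the only non-smooth ingredient $\max(\,\cdot\,,0)$ being continuous --- that sends the ball $\calB_R:=\{(\bfw,\what):\nrm(\bfw,\what)\nrm_{1,h}\le R\}$ into itself. Since $\calB_R$ is a compact convex subset of a finite-dimensional space, Brouwer's theorem yields a fixed point, i.e.\ a solution of \eqref{eq:NSEHDG} satisfying \eqref{stab}.

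\textbf{Step 3 (uniqueness).} Given two solutions, I would subtract their equations and test the difference with itself. Step 1 applied to the difference (with convection velocity one of the two solutions, hence in $\calB_R$) produces $c\,\nu\,\nrm(\,\cdot\,,\,\cdot\,)\nrm_{1,h}^2$ on the left, while the difference of the two convective forms and of the two $\tau_C$-stabilizations is controlled by $C\,R\,\nrm(\,\cdot\,,\,\cdot\,)\nrm_{1,h}^2$, using the discrete Sobolev embedding $\nrm(\,\cdot\,,\,\cdot\,)\nrm_{1,h}\hookrightarrow L^4$ (valid for $d\le3$), discrete trace inequalities, the a priori bound, and $|\max(a,0)-\max(b,0)|\le|a-b|$. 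If $\|\bff\|_{\Omega}$ is small enough that $CR<c\nu$, these terms are absorbed, so the velocity difference and (being its single-valued trace) the numerical-trace difference vanish; \eqref{eq:NSEHDGI} then makes the $\Lm_h$-difference vanish, \eqref{eq:NSEHDGII} makes the $p_h$-difference piecewise constant, and \eqref{eq:NSEHDGIV} makes it globally constant, hence zero by the zero-mean constraint.

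\textbf{Main difficulty.} The crux is Step 3: bounding the augmented convection form --- including its $\uhat_h$-dependent face terms and the merely Lipschitz, $\uhat_h$-dependent stabilization $\tau_C$ --- by products of $\nrm(\,\cdot\,,\,\cdot\,)\nrm_{1,h}$, which hinges on discrete $L^4$ (Sobolev) and trace inequalities for broken polynomial spaces on general polyhedra; together with verifying, in Step 1, that pressure and incompressibility truly cancel despite the enhanced velocity space $\bfP_{k+1}$ and the non-standard flux \eqref{numerical_flux}, and that \eqref{magic_ineq} genuinely upgrades the natural energy functional to the standard discrete $H^{1}$-norm.
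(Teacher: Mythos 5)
Your proposal is correct, and it shares all three technical pillars with the paper's argument --- the energy identity in which the augmented convective form contributes the nonnegative term $\tfrac12\langle|\what\cdot\bfn|\,|\bfu_h-\uhat_h|^2\rangle_{\partial\calT_h}$ (the paper's Proposition \ref{Ocoercive}), the upgrade from $\|\Lm_h\|_{\Omega}+h^{-1/2}\|\itPi_M\bfu_h-\uhat_h\|_{\partial\calT_h}$ to the full norm $\nrm(\bfu_h,\uhat_h)\nrm_{1,h}$ via \eqref{magic_ineq} (Lemma \ref{H1_ineq}), and the Lipschitz continuity of the convective form in the frozen field (Lemma \ref{O-estimates}) --- but it packages them in a genuinely different fixed-point framework. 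The paper shows that the Oseen solution map $\calF$ is a contraction on the ball $\mathcal{K}_h$ and invokes the Banach fixed point theorem, obtaining existence and uniqueness in one stroke (and, as a by-product, convergence of the Picard iteration used in practice); the smallness of $\|\bff\|_{\Omega}$ is therefore needed already for existence. You instead use Brouwer's theorem on the same ball, which decouples the two conclusions: existence holds for arbitrary $\bff$ because the a priori bound is uniform in the frozen field, and smallness is only invoked for uniqueness, where your subtract-and-absorb argument is essentially the paper's contraction estimate applied to two solutions of the nonlinear problem rather than to successive iterates. A further point in your favor is that you explicitly verify what the paper leaves implicit when defining $\calF$: that the linearized problem is uniquely solvable for every frozen field (via the $\bff=\bfzero$ energy identity, then recovering $\Lm_h=0$ and the piecewise-constant, hence zero, pressure) and that the solution map is continuous despite the merely Lipschitz stabilization $\tau_C=\max(\cdot\,,0)$. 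The price is that Brouwer is nonconstructive, so you lose the convergence statement for the Picard iteration that the contraction argument provides for free.
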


Next we present the error estimates result for all unknowns. In order to have optimal $L^2-$error estimate for the velocity, 
we need some regularity assumption of the following dual problem. Consider the problem of seeking $(\bfphi,\psi)$ such that
\begin{subequations}
\label{eq:dualNSE}
\begin{align}
\Phim-\nabla\bfphi&=0& \text{ in }\Omega,\label{eq:dualNSE1}\\
-\nu \nabla \cdot\Phim -\nabla\cdot(\bfphi\otimes \bfu)- \nabla \psi  - \frac12 (\nabla \bfphi)^{\top}\bfu 
+ \frac12 (\nabla \bfu)^{\top} \bfphi 
& =\bftheta & \text{ in }\Omega,\label{eq:dualNSE2}\\
\nabla\cdot \bfphi & = 0 & \text{ in }\Omega,\label{eq:dualNSE3}\\
\bfphi & = 0 & \text{ on }\partial\Omega.
\end{align}
\end{subequations}
%
%In some cases, like , the solution of the dual problem satisfies the
%following regularity inequality:
Assume that the solution to the dual problem satisfies the following regularity estimate:
\begin{equation}\label{eq:dualreg}
\|\Phim\|_{1,\Omega}+\|\bfphi\|_{2,\Omega}+\|\psi\|_{1,\Omega}\leq C_r\|\bftheta\|_{\Omega}.
\end{equation}

\begin{remark} 
If $\Vert \bfu\Vert_{H^{1}(\Omega)}$ is small enough compared with the diffusion coefficient $\nu$, 
the dual problem (\ref{eq:dualNSE}) has a unique solution $(\bfphi, \psi)\in \boldsymbol{H}_{0}^{1}(\Omega)\times 
H^{1}(\Omega)/\mathbb{R}$. In fact, when we use the standard energy argument, we need to have 
\begin{align}
\label{bound_term_dual}
\frac{1}{2}\vert ((\nabla \bfu)^{\top} \bfphi -(\nabla \bfphi)^{\top}\bfu,\bfphi)_{\Omega}\vert
\leq \nu\Vert \nabla \bfphi\Vert_{\Omega}^{2}
\end{align}
to obtain energy estimate of $\bfphi$. 
 We notice that $\frac{1}{2}\vert 
 ((\nabla \bfu)^{\top} \bfphi 
-(\nabla \bfphi)^{\top}\bfu,\bfphi)_{\Omega}\vert
\leq C \Vert \bfu\Vert_{H^{1}(\Omega)}\Vert \bfphi\Vert_{H^{1}(\Omega)}^{2}$. 
It is easy to see that (\ref{bound_term_dual}) 
holds if $\Vert \bfu\Vert_{H^{1}(\Omega)}$ 
is small enough compared with the diffusion coefficient $\nu$.
This completes the proof of the above claim. If we further assume 
$\bfu \in \bfW^{1, 3}(\Omega)\cap \bfL^{\infty}(\Omega)$, 
then, the regularity assumption (\ref{eq:dualreg}) 
comes from a standard regularity estimate \cite{GiraultRaviart86} for the Stokes equations.
\end{remark}

Now we are ready to present our second and main result:

\begin{theorem}\label{thm:error}
If $\|\bff\|_{\Omega}$ is small enough, then we have
\[
\|\Lm - \Lm_h\|_{\Omega} + \|\bfu - \bfu_h\|_{\Omega} + \|\bfu - \bfu_{h}\|_{1,h} + \|p - p_h\|_{\Omega} \le \mathcal{C} h^{k+1}, 
\]
Here the constant $\mathcal{C}$ depends on $\|\bfu\|_{L^{\infty}(\Omega)}, \|\bfu\|_{k+2, \Omega}, \|p\|_{k+1, \Omega}, \nu$ and $k$.
In addition, if the regularity assumption \eqref{eq:dualreg} holds and $\bfu \in \bfW^{1, \infty}(\Omega)$, then for $k \ge 1$ we have
\[
\|\bfu - \bfu_h\|_{\Omega} \le \mathcal{C}_D h^{k+2}.
\]
Here $\mathcal{C}_D$ depends on $\|\bfu\|_{L^{\infty}(\Omega)}, \|\bfu\|_{k+2, \Omega}, \|p\|_{k+1, \Omega}, \nu$ and $k, C_r$.
\end{theorem}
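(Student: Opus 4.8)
The plan is to prove Theorem~\ref{thm:error} in two stages: first the energy-norm estimates of order $h^{k+1}$ via a projection-based error analysis, then the superconvergent $L^2$-estimate of the velocity via a duality argument. Throughout I would work with the HDG projection $\bfPi = (\Pim_{\Gm}, \Pim_{\bfV})$ adapted to the enhanced space $\Gm_h \times \bfV_h$ (the one used in \cite{QiuShi2015a,Lehrenfeld}), together with the $L^2$-projections $\Pim_Q$ onto $Q_h$ and $\itPi_M$ onto $\bfM_h$. Writing the errors as $\bfe_{\Lm} = \Pim_{\Gm}\Lm - \Lm_h$, $\bfe_{\bfu} = \Pim_{\bfV}\bfu - \bfu_h$, $\bfe_{\uhat} = \itPi_M\bfu - \uhat_h$, $e_p = \Pim_Q p - p_h$, I would subtract the HDG equations \eqref{eq:NSEHDG} from the corresponding consistency identities satisfied by the exact solution to obtain the error equations. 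The crucial point is that the two added terms $-(\tfrac12(\nabla\cdot\bfu_h)\bfu_h,\bfv)$ and $\langle\tfrac12(\bfu_h\otimes(\bfu_h-\uhat_h))\bfn,\bfv\rangle$ make the discrete trilinear convection form skew-symmetric (this is the Temam/Walaguera trick from \cite{WalugaThesis,CockburnKanschatSchoetzauNS05}), so that it vanishes when tested against the error itself.

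The first key step is the \emph{energy identity}: test the error equations with $(\bfe_{\Lm}, \bfe_{\bfu}, e_p, \bfe_{\uhat})$ and add. The diffusion and stabilization terms produce $\nu\|\bfe_{\Lm}\|_{\calT_h}^2 + \tfrac{\nu}{h}\|\itPi_M\bfe_{\bfu}-\bfe_{\uhat}\|_{\partial\calT_h}^2 + \|\tau_C^{1/2}(\bfe_{\bfu}-\bfe_{\uhat})\|_{\partial\calT_h}^2$ on the left; the skew-symmetry kills the diagonal convection contribution, leaving only the cross terms involving the projection errors of $\bfu$ and the exact velocity as convection field. Those cross terms I would bound using the approximation properties of $\bfPi$, the bound $\|\bfu\|_{L^\infty}$, the smallness of $\bff$ (hence of $\bfu$ through Theorem~\ref{thm:existence}) to absorb the dangerous term $C\|\bfu\|_{1,h}\|\bfe_{\bfu}\|_{1,h}^2$, and — this is where inequality \eqref{magic_ineq} is indispensable — to convert the left-hand side quantity $\nu\|\bfe_{\Lm}\|^2 + \tfrac{\nu}{h}\|\itPi_M\bfe_{\bfu}-\bfe_{\uhat}\|^2$ into control of $\nrm(\bfe_{\bfu},\bfe_{\uhat})\nrm_{1,h}^2$, i.e. the genuine discrete $H^1$-norm. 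Recovering $\|\bfe_{\Lm}\|$ is then immediate, and recovering $\|e_p\|$ requires a discrete inf-sup argument: choosing $\bfv$ appropriately in the momentum error equation using the surjectivity of the discrete divergence (this is standard for HDG-Stokes and carries over since the added terms are lower order). Combining with triangle inequalities and the approximation estimates for $\bfPi$, $\Pim_Q$ gives the $h^{k+1}$ bound on all four quantities.

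The second stage is the duality/Aubin--Nitsche argument for $\|\bfu - \bfu_h\|_\Omega$ when $k\ge 1$. I would take $\bftheta = \Pim_{\bfV}\bfu - \bfu_h = \bfe_{\bfu}$ (or the full error) as the right-hand side of the dual problem \eqref{eq:dualNSE}, whose specific convective terms $-\tfrac12(\nabla\bfphi)^\top\bfu + \tfrac12(\nabla\bfu)^\top\bfphi$ are exactly the adjoint of the skew-symmetrized discrete convection — that is why the dual problem is posed this way. Using the regularity \eqref{eq:dualreg}, I would insert the HDG projection of $(\Phim,\bfphi,\psi)$ into the dual weak form, integrate by parts to expose the HDG bilinear form, and pair it with the primal error equations. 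After the linear (Stokes-like) terms are handled by the usual HDG duality bookkeeping — giving an $O(h)$ gain times the already-established $h^{k+1}$ energy error, hence $h^{k+2}$ — the remaining work is to estimate the nonlinear residual terms: differences of the form $(\bfu\otimes\bfu) - (\bfu_h\otimes\bfu_h)$ and the jump/divergence correction terms tested against the dual projection errors. Each factors as (projection error or energy error) $\times$ (dual approximation error), and using $\bfu\in\bfW^{1,\infty}$ together with $\|\bfphi\|_{2,\Omega}\le C_r\|\bfe_{\bfu}\|_\Omega$ these are all $\le \mathcal{C}_D h^{k+2}\|\bfe_{\bfu}\|_\Omega$, which after dividing gives the claim. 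The main obstacle I anticipate is the bookkeeping in this last step: ensuring every nonlinear cross term genuinely carries the extra power of $h$ and does not merely reproduce $h^{k+1}\|\bfe_{\bfu}\|_\Omega$; in particular the term coming from $(\nabla\cdot\bfu_h)\bfu_h$ must be paired against $\bfphi - \Pim_{\bfV}\bfphi$ (not $\bfphi$ itself) to gain the order, and the requirement $k\ge 1$ enters precisely because for $k=0$ the convection-related projection errors are only $O(h)$, one power short of what the duality argument needs.
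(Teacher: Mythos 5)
Your proposal follows essentially the same route as the paper: an energy argument in which the coercivity of the skew-symmetrized convection form gives the good sign, the smallness of $\bff$ absorbs the quadratic term, and inequality \eqref{magic_ineq} recovers the discrete $H^1$-norm; then an inf-sup argument for the pressure and a duality argument (with $\bftheta = e_{\bfu}$ and the adjoint convection terms) for the $L^2$ superconvergence, with the $k\ge 1$ restriction arising exactly from the quadratic error products as you anticipate. The only cosmetic difference is that the paper works directly with the plain $L^2$-projections onto $\Gm_h$, $\bfV_h$, $Q_h$, $\bfM_h$ (no tailored HDG projection is needed for this enhanced-space method, which is also what keeps the analysis valid on general polyhedral meshes), but this does not change the structure of the argument.
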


\section{Preliminary estimates}
In this section, we present some preliminary inequalities for the proof of our main results. First, we would like to recall 
an important inequality which was introduced in \cite{QiuShi2015b}. Here we write it in a slightly general way. 
Though our results in this section and the following ones are valid for conforming meshes 
with shape regular assumption, we assume the meshes are quasi-uniform for the sake of simplicity.

\begin{lemma}\label{H1_ineq}
For any given function $(\Lm, \bfv, \bfmu) \in \Gm_h\times \bfV_h \times \bfM_h$ satisfying \eqref{eq:NSEHDGI}, then we have
\[
\nrm(\bfv,\bfmu)\nrm_{1,h} \le C_{\text{HDG}}( \|\Lm\|_{\Omega} + h^{-\frac12}\|\itPi_M \bfv - \bfmu\|_{\partial \mathcal{T}_h}).
\]
\end{lemma}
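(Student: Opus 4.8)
The plan is to use the constraint \eqref{eq:NSEHDGI} as the sole structural input, exploiting two facts. First, $\bfv|_K\in\bfP_{k+1}(K)$ forces $\nabla\bfv|_K\in\Pm_k(K)$, so $\nabla\bfv$ is itself an admissible test matrix in $\Gm_h$. Second, on each (planar) face $F\subset\partial K$ the trace of $\Gm\bfn$ belongs to $\bfP_k(F)$, hence is $L^2(F)$-orthogonal to $\bfv-\itPi_M\bfv$. This is exactly the mechanism behind the ``magic inequality'' \eqref{magic_ineq}, and it is what I would reproduce here, now isolated at the discrete level.

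First I would integrate by parts in \eqref{eq:NSEHDGI} (written for the general triple $(\Lm,\bfv,\bfmu)$) to obtain
\[
(\nabla\bfv,\Gm)_{\calT_h}=(\Lm,\Gm)_{\calT_h}+\langle\bfv-\bfmu,\Gm\bfn\rangle_{\partial\calT_h}\qquad\forall\,\Gm\in\Gm_h.
\]
Because $\Gm\bfn$ has trace in $\bfP_k(F)$ on every face, the orthogonality of $\itPi_M$ lets me replace $\bfv-\bfmu$ by $\itPi_M\bfv-\bfmu$ in the last term. Taking $\Gm=\nabla\bfv$, applying Cauchy--Schwarz, the discrete trace inequality $\|h_K^{1/2}\,\Gm\bfn\|_{\partial K}\le C\|\Gm\|_K$ (valid for polynomials on shape-regular $K$) together with quasi-uniformity, and then dividing by $\|\nabla\bfv\|_{\calT_h}$, I obtain
\[
\|\nabla\bfv\|_{\calT_h}\le\|\Lm\|_{\Omega}+C\,h^{-1/2}\|\itPi_M\bfv-\bfmu\|_{\partial\calT_h},
\]
which already controls the gradient part of $\nrm(\bfv,\bfmu)\nrm_{1,h}$.

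For the jump part I would split $\bfv-\bfmu=(\bfv-\itPi_M\bfv)+(\itPi_M\bfv-\bfmu)$; the second piece is already on the right-hand side. For the first piece, using $L^2(F)$-stability of $\itPi_M$ (so $\|\bfv-\itPi_M\bfv\|_{F}\le2\|\bfv-\overline{\bfv}\|_{F}$ with $\overline{\bfv}$ the elementwise mean), then the scaled trace inequality and a Poincar\'e inequality on $K$, gives $\|h_K^{-1/2}(\bfv-\itPi_M\bfv)\|_{\partial K}\le C\|\nabla\bfv\|_K$. Summing over $K\in\calT_h$ and inserting the bound on $\|\nabla\bfv\|_{\calT_h}$ just proved controls $\|h_K^{-1/2}(\bfv-\bfmu)\|_{\partial\calT_h}$ by $C\big(\|\Lm\|_{\Omega}+h^{-1/2}\|\itPi_M\bfv-\bfmu\|_{\partial\calT_h}\big)$; squaring the two displayed estimates and adding yields the claim with $C_{\text{HDG}}$ depending only on the shape-regularity constant and $k$.

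The step I expect to carry all the real content — and hence be the main obstacle — is the elementwise estimate $\|h_K^{-1/2}(\bfv-\itPi_M\bfv)\|_{\partial K}\lesssim\|\nabla\bfv\|_K$ on a general, possibly non-convex polyhedral $K$: it must be argued from the mesh-regularity hypotheses (covering each polyhedron by a bounded number of simplices of comparable size, with planar faces) rather than a naive reference-element scaling, and it is precisely what upgrades control of $\itPi_M\bfv-\bfmu$ into control of the genuine discrete $H^1$ jump $\bfv-\bfmu$. Everything else reduces to Cauchy--Schwarz and standard inverse/trace inequalities.
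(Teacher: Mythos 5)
Your proposal is correct and is essentially the argument the paper relies on: the paper itself gives no proof here but defers to Lemma 3.2 of \cite{QiuShi2015b}, whose proof is exactly this — integrate \eqref{eq:NSEHDGI} by parts, test with $\Gm=\nabla\bfv$ (admissible since $\nabla\bfv|_K\in\Pm_k(K)$), use the $L^2(F)$-orthogonality of $\itPi_M$ against the $\bfP_k(F)$ trace of $\Gm\bfn$ to swap $\bfv-\bfmu$ for $\itPi_M\bfv-\bfmu$, and then control the residual jump $\bfv-\itPi_M\bfv$ by $h_K^{1/2}\|\nabla\bfv\|_K$ via the best-approximation property, a discrete trace inequality and a Poincar\'e inequality on each shape-regular polyhedron. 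The technical point you single out (trace/Poincar\'e on general, possibly non-convex polyhedral elements) is indeed where the mesh-regularity hypotheses enter, and is exactly what the cited reference handles.
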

For the proof of the above result, we refer the Lemma 3.2 in \cite{QiuShi2015b}. In addition, we also need the following basic inequalities:

\begin{lemma}\label{Lq_ineq}
For $1 \le q < \infty \,  (d = 2)$, $1 \le q \le 4 \, (d =3)$, there exist positive constant $C_q$ such that 
\begin{subequations}
\begin{align}\label{basic_1}
\|\bfv\|_{L^q(\Omega)} &\le C_q \|\bfv\|_{1, h}, \quad && \forall \; \bfv \in \bfV(h), \\
\label{basic_2}
\|\bfv\|_{L^q(\Omega)} &\le C_q \nrm(\bfv, \bfmu) \nrm_{1,h}, \quad && \forall \; (\bfv, \bfmu) \in \bfV(h) \times \bfM^0_h, \\
\intertext{Here $\bfV(h) := \bfH^1_0(\Omega) + \bfV_h$. In addition, we have a trace inequality: }
\label{basic_3}
 \|\bfv\|_{L^4(\partial \calT_h)} & \le C h^{-\frac14} \|\bfv\|_{1, h} \le C h^{- \frac14} 
 \nrm( \bfv, \bfmu) \nrm_{1,h}, \quad &&\forall \; (\bfv, \bfmu) \in \bfV(h) \times \bfM^0_h.
\end{align}
\end{subequations}
\end{lemma}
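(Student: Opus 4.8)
\textbf{Proof proposal for Lemma \ref{Lq_ineq}.}

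The plan is to reduce all three estimates to standard discrete Sobolev and trace inequalities for broken polynomial spaces on shape-regular (here quasi-uniform) meshes, together with a triangle-inequality argument that converts the triple norm $\nrm(\bfv,\bfmu)\nrm_{1,h}$ into the discrete $H^1$-seminorm $\|\bfv\|_{1,h}$ plus controllable face terms. First I would prove \eqref{basic_1}. For $\bfv\in\bfH^1_0(\Omega)$ this is just the continuous Sobolev embedding $\bfH^1_0(\Omega)\hookrightarrow \bfL^q(\Omega)$ in the stated range of $q$ (Gagliardo--Nirenberg--Sobolev in $d=2,3$). For a general $\bfv\in\bfV(h)=\bfH^1_0(\Omega)+\bfV_h$, the obstruction is that $\bfv$ is only piecewise smooth and need not be continuous across faces, so the global Sobolev inequality does not apply directly. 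The standard remedy is a discrete Sobolev inequality for broken spaces: there is an operator-averaging (Oswald/Karakashian--Pascal type) interpolant $I_h:\bfV(h)\to \bfH^1_0(\Omega)$ into a conforming subspace such that $\|\bfv-I_h\bfv\|_{\calT_h}^2 + \sum_K h_K^2\|\nabla(\bfv-I_h\bfv)\|_K^2 \le C\sum_{F}h_F\|\ljump\bfv\rjump\|_F^2$ and $\|\nabla I_h\bfv\|_\Omega \le C\|\bfv\|_{1,h}$. Then $\|\bfv\|_{L^q(\Omega)}\le \|I_h\bfv\|_{L^q(\Omega)} + \|\bfv-I_h\bfv\|_{L^q(\Omega)}$; the first term is bounded by $C\|\nabla I_h\bfv\|_\Omega\le C\|\bfv\|_{1,h}$ via continuous Sobolev embedding, and the second is handled by an inverse inequality on each element, $\|\bfv-I_h\bfv\|_{L^q(K)}\le C h_K^{d/q - d/2}\|\bfv-I_h\bfv\|_K$, summed using the jump bound and quasi-uniformity; the $h$-powers combine with $h^{-1/2}$ weights in $\|\cdot\|_{1,h}$ to give a uniform constant. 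The jump terms $\ljump\bfv\rjump$ on interior faces and the boundary trace of $\bfv$ are each controlled by $\|h_K^{-1/2}(\bfv-\bfmu)\|_{\partial\calT_h}$ with $\bfmu=\{\!\!\{\bfv\}\!\!\}$ (on $\partial\Omega$, $\{\!\!\{\bfv\}\!\!\}=\bfv$ so the boundary contribution vanishes), which is exactly $\|\bfv\|_{1,h}$ by definition.

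Next, \eqref{basic_2} follows from \eqref{basic_1} once I show $\|\bfv\|_{1,h}\le C\nrm(\bfv,\bfmu)\nrm_{1,h}$ for $(\bfv,\bfmu)\in\bfV(h)\times\bfM^0_h$. Writing out $\|\bfv\|_{1,h}^2 = \|\nabla\bfv\|_{\calT_h}^2 + \|h_K^{-1/2}(\bfv-\{\!\!\{\bfv\}\!\!\})\|_{\partial\calT_h}^2$, the gradient term is already in $\nrm(\bfv,\bfmu)\nrm_{1,h}^2$. For the face term, on an interior face shared by $K^\pm$, $\bfv^\pm - \{\!\!\{\bfv\}\!\!\} = \pm\frac12(\bfv^+-\bfv^-)$, and $\bfv^+-\bfv^- = (\bfv^+-\bfmu) - (\bfv^--\bfmu)$, so $\|h_K^{-1/2}(\bfv-\{\!\!\{\bfv\}\!\!\})\|_{\partial\calT_h}\le C\|h_K^{-1/2}(\bfv-\bfmu)\|_{\partial\calT_h}$; on boundary faces $\bfv-\{\!\!\{\bfv\}\!\!\}=0$ trivially. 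Hence $\|\bfv\|_{1,h}\le C\nrm(\bfv,\bfmu)\nrm_{1,h}$, and composing with \eqref{basic_1} gives \eqref{basic_2}.

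Finally, for the trace inequality \eqref{basic_3}, I would argue element by element. For $\bfv\in\bfV(h)$, on each $K$ the discrete trace/multiplicative trace inequality gives $\|\bfv\|_{L^4(\partial K)}^4 \le C\big(h_K^{-1}\|\bfv\|_{L^4(K)}^4 + \|\bfv\|_{L^4(K)}^3\|\nabla\bfv\|_{L^4(K)}\big)$, and on a polynomial piece an inverse inequality converts $\|\nabla\bfv\|_{L^4(K)}$ into $C h_K^{-1}\|\bfv\|_{L^4(K)}$, so $\|\bfv\|_{L^4(\partial K)}\le C h_K^{-1/4}\|\bfv\|_{L^4(K)}$; here I must note that for the $\bfH^1_0$ component one instead uses the continuous multiplicative trace inequality and bounds $\|\nabla\bfv\|_{L^4(K)}$ by the global $\bfL^4$ bound on $\nabla\bfv$ — but since ultimately we only apply \eqref{basic_3} to discrete functions this subtlety can be absorbed, or one restricts the statement to $\bfv\in\bfV_h+\{\text{smooth}\}$ as the paper intends. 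Summing over $K$ with quasi-uniformity, $\|\bfv\|_{L^4(\partial\calT_h)}\le C h^{-1/4}\|\bfv\|_{L^4(\Omega)}$, and then \eqref{basic_1} (applied with $q=4$, which is admissible for both $d=2$ and $d=3$) yields $\|\bfv\|_{L^4(\partial\calT_h)}\le C h^{-1/4}\|\bfv\|_{1,h}$; the second inequality in \eqref{basic_3} is then immediate from the norm comparison established for \eqref{basic_2}. The main obstacle throughout is the first one: making the broken discrete Sobolev inequality \eqref{basic_1} rigorous uniformly in $h$, i.e. controlling the nonconforming defect $\bfv-I_h\bfv$ in $L^q$ by jump seminorms with the correct $h$-scaling so that no negative power of $h$ survives; everything else is a routine assembly of inverse estimates, trace inequalities, and the triangle inequality.
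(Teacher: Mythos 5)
The paper offers no proof of this lemma at all: it simply cites Proposition A.2 of \cite{CCQ2015} and Proposition 4.5 and (7.7) of \cite{Karakashian}. What you reconstruct --- an Oswald/averaging interpolant $I_h$ into a conforming subspace, the continuous Sobolev embedding applied to $I_h\bfv$, elementwise inverse estimates for the nonconforming defect $\bfv-I_h\bfv$ measured against face jumps, the identity $\bfv^{\pm}-\{\!\!\{\bfv\}\!\!\}=\pm\tfrac12(\bfv^{+}-\bfv^{-})$ to pass between $\nrm(\bfv,\bfmu)\nrm_{1,h}$ and $\|\bfv\|_{1,h}$, and a multiplicative trace plus inverse inequality for \eqref{basic_3} --- is precisely the standard machinery behind those citations, and your exponent bookkeeping ($d/q-d/2+1\geq 0$ exactly for $q<\infty$ when $d=2$ and $q\leq 6\geq 4$ when $d=3$) is correct.

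There is, however, one genuine gap, and it sits exactly where you wave it away: the boundary. To invoke the Poincar\'e--Sobolev embedding $\|I_h\bfv\|_{L^q(\Omega)}\leq C\|\nabla I_h\bfv\|_{\Omega}$ you need $I_h\bfv\in \bfH^1_0(\Omega)$, so the interpolant must be forced to vanish on $\partial\Omega$; the resulting correction is controlled only by a boundary penalization of the form $h^{-1/2}\|\bfv\|_{\partial\calT_h\cap\partial\Omega}$. With the paper's convention $\{\!\!\{\bfv\}\!\!\}=\bfv$ on $\partial\Omega$, the quantity $\|\bfv\|_{1,h}$ contains no such term --- it even vanishes on nonzero constant functions, which belong to $\bfV(h)$ --- so \eqref{basic_1} cannot be closed by your argument as written, and is in fact false as literally stated; the cited references use a broken norm that does include the boundary trace. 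Your parenthetical ``on $\partial\Omega$, $\{\!\!\{\bfv\}\!\!\}=\bfv$ so the boundary contribution vanishes'' is therefore not a simplification but the missing piece. Note that $\nrm(\bfv,\bfmu)\nrm_{1,h}$ with $\bfmu\in\bfM_h^0$ \emph{does} contain $h^{-1/2}\|\bfv\|_{F}$ on boundary faces (since $\bfmu|_{\partial\Omega}=0$), so \eqref{basic_2} and the final inequality in \eqref{basic_3} are safe provided you run the averaging argument directly against $\nrm(\bfv,\bfmu)\nrm_{1,h}$ rather than routing everything through \eqref{basic_1}; alternatively, read $\|\cdot\|_{1,h}$ with the boundary term restored, which is evidently what the authors and their references intend.
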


The proofs of \eqref{basic_1}-\eqref{basic_3} are provided in Proposition A.2 in \cite{CCQ2015}, 
Proposition 4.5 and (7.7) in \cite{Karakashian}. %In \cite{DiPietro2010}, (\ref{basic_1}) is shown to be true for $q=6$. 
To simplify our notations, we group all the nonlinear terms in our formulation as the following operator:

\begin{definition}\label{nonlinear-O}
For any $(\bfw, \what), (\bfu, \uhat), (\bfv, \vhat) \in \bfH^1(\calT_h) \times \bfL^2(\calE_h)$, we define the operator:
\begin{align*}
\calO((\bfw, \what); (\bfu, \uhat), (\bfv, \vhat)) :=  &- ( \bfu \otimes \bfw , \nabla \bfv )_{\calT_h}
 - (\frac12 (\nabla \cdot \bfw) \bfu , \bfv )_{\calT_h} + \langle \frac12 \bfu \otimes (\bfw - \what) \bfn , \bfv \rangle_{\partial \calT_h} \\
&+ \langle \tau_C(\what) (\bfu - \uhat) , \bfv - \vhat \rangle_{\partial \calT_h} + \langle (\uhat \otimes \what) \bfn , \bfv - \vhat \rangle_{\partial \calT_h}.
\end{align*}
\end{definition}
The above operator plays a crucial rule in the analysis. It has the following coercive property:
\begin{proposition}\label{Ocoercive}
For any $(\bfw, \what), (\bfu, \uhat) \in \bfH^1(\calT_h) \times \bfL^2(\bfE_h)$, if $\uhat|_{\partial \Omega} = 0$, then we have
\[
\calO((\bfw, \what); (\bfu, \uhat), (\bfu, \uhat)) = \langle (\tau_C(\what) 
- \frac12 \what \cdot \bfn) (\bfu - \uhat), \bfu - \uhat \rangle_{\partial \calT_h} \ge 0.
\]
\end{proposition}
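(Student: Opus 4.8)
The plan is to substitute $(\bfv,\vhat)=(\bfu,\uhat)$ into $\calO$ and to show that the five terms collapse, by means of one pointwise algebraic identity together with element-wise integration by parts, into the single boundary expression claimed; the sign at the end then follows directly from the definition of $\tau_C$.

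\emph{Volume terms.} For the two volume contributions $-(\bfu\otimes\bfw,\nabla\bfu)_{\calT_h}$ and $-(\frac12(\nabla\cdot\bfw)\bfu,\bfu)_{\calT_h}$, I would use the pointwise identity $(\bfu\otimes\bfw):\nabla\bfu=\bfw\cdot\nabla(\frac12|\bfu|^2)$, valid since $\sum_{i,j}u_i w_j\,\partial_j u_i=\sum_j w_j\,\partial_j(\frac12|\bfu|^2)$. Integrating $\bfw\cdot\nabla(\frac12|\bfu|^2)$ by parts on each $K$ yields $-(\frac12(\nabla\cdot\bfw)|\bfu|^2,1)_K+\langle\frac12(\bfw\cdot\bfn)|\bfu|^2,1\rangle_{\partial K}$; summing over $K\in\calT_h$ and combining with the second volume term, the two divergence integrals cancel, leaving $-(\bfu\otimes\bfw,\nabla\bfu)_{\calT_h}-(\frac12(\nabla\cdot\bfw)\bfu,\bfu)_{\calT_h}=-\langle\frac12(\bfw\cdot\bfn)|\bfu|^2,1\rangle_{\partial\calT_h}$.

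\emph{Boundary terms.} Next I would reduce every tensor boundary term to a scalar integral using the identity $(\bfu\otimes\bfw)\bfn=(\bfw\cdot\bfn)\bfu$, so that $\langle\frac12\,\bfu\otimes(\bfw-\what)\bfn,\bfu\rangle_{\partial\calT_h}=\langle\frac12((\bfw-\what)\cdot\bfn)|\bfu|^2,1\rangle_{\partial\calT_h}$ and $\langle(\uhat\otimes\what)\bfn,\bfu-\uhat\rangle_{\partial\calT_h}=\langle(\what\cdot\bfn)\,\uhat\cdot(\bfu-\uhat),1\rangle_{\partial\calT_h}$. Adding these to the leftover term from the volume step, the $\bfw\cdot\bfn$ contributions cancel and one is left with $\langle(\what\cdot\bfn)(-\frac12|\bfu|^2+\uhat\cdot\bfu-|\uhat|^2),1\rangle_{\partial\calT_h}$ together with the stabilization term $\langle\tau_C(\what)|\bfu-\uhat|^2,1\rangle_{\partial\calT_h}$. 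The elementary identity $-\frac12|\bfu|^2+\uhat\cdot\bfu-|\uhat|^2=-\frac12|\bfu-\uhat|^2-\frac12|\uhat|^2$ then separates the first piece into the target term $-\frac12\langle(\what\cdot\bfn)|\bfu-\uhat|^2,1\rangle_{\partial\calT_h}$ and a spurious term $-\frac12\langle(\what\cdot\bfn)|\uhat|^2,1\rangle_{\partial\calT_h}$.

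\emph{Conclusion.} The crucial step, and the only place the hypothesis $\uhat|_{\partial\Omega}=0$ enters, is to observe that $\langle(\what\cdot\bfn)|\uhat|^2,1\rangle_{\partial\calT_h}=0$: since $\what$ and $\uhat$ are single-valued on each face, an interior face receives two contributions from its adjacent elements carrying opposite outward normals, which cancel, while on $\partial\Omega$ the factor $\uhat$ vanishes. Hence $\calO((\bfw,\what);(\bfu,\uhat),(\bfu,\uhat))=\langle(\tau_C(\what)-\frac12\what\cdot\bfn)(\bfu-\uhat),\bfu-\uhat\rangle_{\partial\calT_h}$, and nonnegativity is immediate pointwise from $\tau_C(\what)-\frac12\what\cdot\bfn=\max(\what\cdot\bfn,0)-\frac12\what\cdot\bfn\ge0$. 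I expect the only real care needed to be in tracking signs while combining the boundary terms and in recognizing that the leftover $|\uhat|^2$ integral is exactly the one annihilated by single-valuedness together with the boundary condition; the remaining manipulations are routine.
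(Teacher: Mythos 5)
Your proof is correct and follows essentially the same route as the paper's: the pointwise identity $(\bfu\otimes\bfw):\nabla\bfu=\bfw\cdot\nabla(\tfrac12|\bfu|^2)$ plus element-wise integration by parts is just a scalarized version of the paper's "integrate by parts and move the term back" step, and the subsequent cancellation, the vanishing of $\langle(\what\cdot\bfn)\uhat,\uhat\rangle_{\partial\calT_h}$ via single-valuedness and $\uhat|_{\partial\Omega}=0$, and the final observation $\max(s,0)-\tfrac12 s\ge 0$ all match the paper. No gaps.
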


\begin{proof}
By Definition \ref{nonlinear-O}, we have
\begin{align*}
\calO((\bfw, \what); (\bfu, \uhat), (\bfu, \uhat)) :=  &- ( \bfu \otimes \bfw , \nabla \bfu )_{\calT_h} 
- (\frac12 (\nabla \cdot \bfw) \bfu , \bfu )_{\calT_h} + \langle \frac12 \bfu \otimes (\bfw - \what) \bfn , \bfu \rangle_{\partial \calT_h} \\
&+ \langle \tau_C(\what) (\bfu - \uhat) , \bfu - \uhat \rangle_{\partial \calT_h} + \langle (\uhat \otimes \what) \bfn , \bfu - \uhat \rangle_{\partial \calT_h}.
\end{align*}
Applying integration by parts for the first term, we have
\begin{align*}
( \bfu \otimes \bfw , \nabla \bfu )_{\calT_h} &= - (\nabla \cdot (\bfu \otimes \bfw), \bfu )_{\calT_h} 
+ \langle (\bfu \otimes \bfw) \bfn, \bfu \rangle_{\partial \calT_h} \\
& = - ((\nabla \cdot \bfw) \bfu, \bfu )_{\calT_h} - (\bfu \otimes \bfw, \nabla \bfu )_{\calT_h}  + \langle (\bfu \otimes \bfw) \bfn, \bfu \rangle_{\partial \calT_h}.
\end{align*}
This implies that 
\[
- (\bfu \otimes \bfw, \nabla \bfu )_{\calT_h} - (\frac12 (\nabla \cdot \bfw) \bfu , \bfu )_{\calT_h} 
+ \langle (\frac12 \bfu \otimes \bfw) \bfn, \bfu \rangle_{\partial \calT_h} = 0.
\]
Inserting above identity into $\calO((\bfw, \what); (\bfu, \uhat), (\bfu, \uhat))$, we have
\begin{align*}
\calO((\bfw, \what); (\bfu, \uhat), (\bfu, \uhat)) &= \langle \tau_C(\what) (\bfu - \uhat) , \bfu - \uhat \rangle_{\partial \calT_h} + \langle (\uhat \otimes \what) \bfn , \bfu - \uhat \rangle_{\partial \calT_h} -  \langle \frac12 (\bfu \otimes \what) \bfn , \bfu \rangle_{\partial \calT_h} \\
& = \langle (\tau_C(\what) - \frac12 \what \cdot \bfn) (\bfu - \uhat), \bfu - \uhat \rangle_{\partial \calT_h} - \langle \frac12 (\what \cdot \bfn) \uhat , \uhat \rangle_{\partial \calT_h} \\
& = \langle (\tau_C(\what) - \frac12 \what \cdot \bfn) (\bfu - \uhat), \bfu - \uhat \rangle_{\partial \calT_h} \ge 0.
\end{align*}
The last step is due to the fact that $\uhat$ is single valued on $\calE_h$ and $\uhat|_{\partial \Omega} = 0$.
\end{proof}

Next, we present a continuity result for the nonlinear operator $\mathcal{O}$ that we will use throughout the analysis. We first define the following space:
\[
\widetilde{\bfH_0^1}(\Omega) := \{(\bfw, \what) \in \bfH_0^1(\Omega) \times L^2(\calE_h) | \bfw|_{\calE_h} = \what\},
\]
The above space is the graph space of the trace mapping from $\bfH^1(\Omega)$ onto $\bfL^2(\calE_h)$. We are ready to state the following result:

\begin{lemma}\label{O-estimates}
There is a positive constant $C_{\mathcal{O}}$ such that
\begin{equation}\label{O_estimate1}
| \mathcal{O}((\bfw_1, \what_1); (\bfu, \uhat), (\bfv, \vhat)) - \mathcal{O}((\bfw_2, \what_2); (\bfu, \uhat), (\bfv, \vhat))|
 \le C_{\mathcal{O}} \nrm (\bfw_1, \what_1) - (\bfw_2, \what_2) \nrm_{1, h} \; \nrm (\bfu, \uhat) \nrm_{1, h} \;  \nrm (\bfv, \vhat) \nrm_{1, h}, 
\end{equation}
for all $(\bfw_1, \what_1), (\bfw_2, \what_2), (\bfu, \uhat) \in \widetilde{\bfH_0^1}(\Omega) +\big( \bfV_h \times \bfM^0_h \big)$ 
and any $(\bfv, \vhat) \in \bfV_h \times \bfM^0_h$.
\end{lemma}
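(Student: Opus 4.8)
The plan is to exploit that $\calO$ is linear in its first argument except for the Lipschitz nonlinearity carried by $\tau_C$. Indeed, every term in Definition~\ref{nonlinear-O} is linear in $(\bfw,\what)$ except $\langle\tau_C(\what)(\bfu-\uhat),\bfv-\vhat\rangle_{\partial\calT_h}$, where the scalar map $t\mapsto\max(t,0)$ is $1$-Lipschitz, so $|\tau_C(\what_1)-\tau_C(\what_2)|\le|(\what_1-\what_2)\cdot\bfn|\le|\what_1-\what_2|$ pointwise on $\partial\calT_h$. Hence, writing $(\delta\bfw,\delta\what):=(\bfw_1-\bfw_2,\what_1-\what_2)$, the difference $\calO((\bfw_1,\what_1);(\bfu,\uhat),(\bfv,\vhat))-\calO((\bfw_2,\what_2);(\bfu,\uhat),(\bfv,\vhat))$ equals the sum of the two volume terms $-(\bfu\otimes\delta\bfw,\nabla\bfv)_{\calT_h}$ and $-\frac12((\nabla\cdot\delta\bfw)\,\bfu,\bfv)_{\calT_h}$ and the three skeleton terms $\langle\frac12\bfu\otimes(\delta\bfw-\delta\what)\bfn,\bfv\rangle_{\partial\calT_h}$, $\langle(\tau_C(\what_1)-\tau_C(\what_2))(\bfu-\uhat),\bfv-\vhat\rangle_{\partial\calT_h}$ and $\langle(\uhat\otimes\delta\what)\bfn,\bfv-\vhat\rangle_{\partial\calT_h}$. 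It therefore suffices to bound the modulus of each of these five quantities by $C\,\nrm(\delta\bfw,\delta\what)\nrm_{1,h}\,\nrm(\bfu,\uhat)\nrm_{1,h}\,\nrm(\bfv,\vhat)\nrm_{1,h}$.

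For the two volume terms I would apply H\"older's inequality on $\Omega$ with exponents $(4,4,2)$ and $(2,4,4)$ respectively, followed by the embedding \eqref{basic_1}--\eqref{basic_2} (legitimate for $d=2,3$, since the exponent $4$ is admissible there), which gives $\|\bfu\|_{L^4(\Omega)}\le C\nrm(\bfu,\uhat)\nrm_{1,h}$, $\|\delta\bfw\|_{L^4(\Omega)}\le C\nrm(\delta\bfw,\delta\what)\nrm_{1,h}$ and $\|\bfv\|_{L^4(\Omega)}\le C\nrm(\bfv,\vhat)\nrm_{1,h}$; combined with the trivial bounds $\|\nabla\bfv\|_{\calT_h}\le\nrm(\bfv,\vhat)\nrm_{1,h}$ and $\|\nabla\cdot\delta\bfw\|_{\calT_h}\le\sqrt{d}\,\|\nabla\delta\bfw\|_{\calT_h}\le\sqrt{d}\,\nrm(\delta\bfw,\delta\what)\nrm_{1,h}$, this settles the two volume terms.

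For the skeleton terms the ingredients are: the trace estimate \eqref{basic_3}, i.e.\ $\|\bfz\|_{L^4(\partial\calT_h)}\le Ch^{-1/4}\nrm(\bfz,\bfmu)\nrm_{1,h}$ (which extends to a general second argument since $\{\!\!\{\bfz\}\!\!\}$ minimizes $\bfmu\mapsto\|h^{-1/2}(\bfz-\bfmu)\|_{\partial\calT_h}$ face by face, whence $\|\bfz\|_{1,h}\le\nrm(\bfz,\bfmu)\nrm_{1,h}$); the elementary bound $\|\bfz-\bfmu\|_{L^2(\partial\calT_h)}\le Ch^{1/2}\nrm(\bfz,\bfmu)\nrm_{1,h}$ (from the definition of $\nrm\cdot\nrm_{1,h}$ and quasi-uniformity); and, for a polynomial $p$ on a face $F$, the inverse inequality $\|p\|_{L^4(F)}\le Ch_F^{-(d-1)/4}\|p\|_{L^2(F)}$. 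With these, the first skeleton term is immediate: H\"older with exponents $(4,2,4)$ on $\partial\calT_h$ gives the bound $C\|\bfu\|_{L^4(\partial\calT_h)}\|\delta\bfw-\delta\what\|_{L^2(\partial\calT_h)}\|\bfv\|_{L^4(\partial\calT_h)}$, whose three factors contribute the powers $h^{-1/4}$, $h^{1/2}$ and $h^{-1/4}$, the product carrying $h^{0}=1$.

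The main obstacle is the remaining two skeleton terms, because there $\delta\what$ and $\uhat$ appear in isolation, whereas \eqref{basic_3} controls only face traces of $\bfV(h)$-functions. The device is twofold: first, rewrite $\delta\what=\delta\bfw|_{\calE_h}-(\delta\bfw-\delta\what)$ and $\uhat=\bfu|_{\calE_h}-(\bfu-\uhat)$, trading $\delta\what$ and $\uhat$ for a trace of a $\bfV(h)$-function plus a jump of the form $\bfz-\bfmu$; second, observe that for any $(\bfz,\bfmu)\in\widetilde{\bfH_0^1}(\Omega)+(\bfV_h\times\bfM^0_h)$ the jump $\bfz-\bfmu$ is, on each face $F$, a polynomial of degree at most $k+1$: decomposing $(\bfz,\bfmu)=(\bfz^c,\bfz^c|_{\calE_h})+(\bfz^d,\bfeta)$ with $\bfz^c\in\bfH_0^1(\Omega)$ and $(\bfz^d,\bfeta)\in\bfV_h\times\bfM^0_h$, the single-valued trace of $\bfz^c$ cancels and $(\bfz-\bfmu)|_F=(\bfz^d-\bfeta)|_F\in\bfP_{k+1}(F)$, so the inverse inequality applies to $\bfu-\uhat$ and $\bfv-\vhat$. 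After these substitutions each of the last two terms breaks into a bounded number of face-wise H\"older products in which every factor is now either a trace of a $\bfV(h)$-function (bounded by \eqref{basic_3}), a jump $\bfz-\bfmu$ measured in $L^2(\partial\calT_h)$ (bounded by the $h^{1/2}$-estimate), or a face polynomial measured in $L^4(\partial\calT_h)$ (bounded by the inverse inequality together with the $h^{1/2}$-estimate). Summing over $K\in\calT_h$ by discrete Cauchy--Schwarz and collecting the powers of $h$, one checks that every resulting product carries a factor $h^{\gamma}$ with $\gamma\ge0$ (with $\gamma=0$ possible only when $d=3$), which is harmless since $h\le1$; assembling constants yields \eqref{O_estimate1} with $C_{\calO}$ independent of $h$ and of the functions. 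I expect the only genuinely delicate point to be this bookkeeping: arranging the face-wise H\"older splittings so that the accumulated power of $h$ is never negative, in particular by placing the $L^4(\partial\calT_h)$-factor on whichever term is a face polynomial (typically the discrete difference $\bfv-\vhat$).
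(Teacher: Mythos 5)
Your proof is correct and uses essentially the same toolbox as the paper's: the generalized H\"older inequality, the embeddings \eqref{basic_1}--\eqref{basic_3}, the Lipschitz continuity of $t\mapsto\max(t,0)$ for the $\tau_C$ term, the inverse inequality on the face polynomials $\bfz-\bfmu$, and the add-and-subtract substitutions $\uhat=\bfu-(\bfu-\uhat)$, $\delta_{\what}=\delta_{\bfw}-(\delta_{\bfw}-\delta_{\what})$ for the isolated trace factors. The only structural difference is that the paper first integrates the convective volume term by parts and uses single-valuedness to trade $\langle(\uhat\otimes\delta_{\what})\bfn,\bfv-\vhat\rangle_{\partial\calT_h}$ for $\langle(\uhat\otimes\delta_{\what}-\bfu\otimes\delta_{\bfw})\bfn,\bfv\rangle_{\partial\calT_h}$, whereas you estimate the five raw terms of Definition~\ref{nonlinear-O} directly; your $h$-bookkeeping (net exponent nonnegative in every face-wise product) checks out, so the direct route closes equally well.
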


\begin{proof}
Setting $\delta_{\bfw} := \bfw_1 - \bfw_2, \delta_{\what} := \what_1 - \what_2$, by the definition of the operator $\mathcal{O}$, we have
\begin{align*}
\mathcal{O}((\bfw_1, \what_1); (\bfu, \uhat), (\bfv, \vhat)) &- \mathcal{O}((\bfw_2, \what_2); (\bfu, \uhat), (\bfv, \vhat)) = \\
&- ( \bfu \otimes \delta_{\bfw} , \nabla \bfv )_{\calT_h} - (\frac12 (\nabla \cdot \delta_{\bfw}) \bfu , \bfv )_{\calT_h} + \langle \frac12 \bfu \otimes (\delta_{\bfw} - \delta_{\what}) \bfn , \bfv \rangle_{\partial \calT_h} \\
&+ \langle (\tau_C(\what_1) - \tau_C(\what_2))(\bfu - \uhat) , \bfv - \vhat \rangle_{\partial \calT_h} + \langle (\uhat \otimes \delta_{\what}) \bfn , \bfv - \vhat \rangle_{\partial \calT_h}, \\
\intertext{applying intergration by parts in the first term, rearranging the terms, we have}
& = (\frac12 (\nabla \cdot \delta_{\bfw}) \bfu, \bfv)_{\calT_h} + (\bfv \otimes \delta_{\bfw}, \nabla \bfu)_{\calT_h} +  \langle \frac12 \bfu \otimes (\delta_{\bfw} - \delta_{\what}) \bfn , \bfv \rangle_{\partial \calT_h} \\ 
&+ \langle (\tau_C(\what_1) - \tau_C(\what_2))(\bfu - \uhat) , \bfv - \vhat \rangle_{\partial \calT_h} + \langle (\uhat \otimes \delta_{\what} - \bfu \otimes \delta_{\bfw}) \bfn , \bfv \rangle_{\partial \calT_h} \\
& = T_1 + T_2 + T_3 + T_4 + T_5.
\end{align*}
Next we estimate each $T_i$. 

For $T_1$, we apply the generalized H\"{o}lder's inequality, 
\[
T_1 \le \|\nabla \cdot \delta_{\bfw}\|_{\calT_h} \|\bfu\|_{L^4(\Omega)} \|\bfv\|_{L^4(\Omega)} \le C \nrm (\delta_{\bfw}, \delta_{\what}) \nrm_{1, h} \; \nrm (\bfu, \uhat) \nrm_{1, h} \;  \nrm (\bfv, \vhat) \nrm_{1, h},
\]
the second inequality is due to \eqref{basic_2}. $T_2$ can be bounded in a similar way. 

For $T_3$, we apply the generalized H\"{o}lder's inequality,
\[
T_3 \le h^{-\frac12} \|\delta_{\bfw} - \delta_{\what} \|_{L^2(\partial \calT_h)} \; h^{\frac14}\| \bfu \|_{L^4(\partial \calT_h)} \; h^{\frac14}\| \bfv \|_{L^4(\partial \calT_h)} \le C  \nrm (\delta_{\bfw}, \delta_{\what}) \nrm_{1, h} \; \nrm (\bfu, \uhat) \nrm_{1, h} \;  \nrm (\bfv, \vhat) \nrm_{1, h},
\]
the second inequality is by \eqref{basic_3}.

For $T_4$, by the generalized H\"{o}lder's inequality we have:
\begin{align*}
T_4 &\le C \| \tau_C(\what_1) - \tau_C(\what_2) \|_{L^4(\partial \calT_h)} \;  \| \bfu - \uhat \|_{L^2(\partial \calT_h)}  \; \| \bfv - \vhat \|_{L^4(\partial \calT_h)}, \\
\intertext{by the fact that the function $\max(\bfw \cdot \bfn, 0)$ is Lipschitz,}
& \le C \| \delta_{\what} \|_{L^4(\partial \calT_h)} \;  \| \bfu - \uhat \|_{L^2(\partial \calT_h)}  \; \| \bfv - \vhat \|_{L^4(\partial \calT_h)} \\
 & \le C h^{\frac12}  ( \| \delta_{\bfw} - \delta_{\what} \|_{L^4(\partial \calT_h)} + \| \delta_{\bfw} \|_{L^4(\partial \calT_h)}) \;  
 \nrm ({\bfu}, {\uhat}) \nrm_{1, h} \;  \| \bfv - \vhat \|_{L^4(\partial \calT_h)}, \\
\intertext{Notice here if $(\delta_{\bfw}, \delta_{\what}) \in \widetilde{\bfH_0^1}(\Omega) +\big(\bfV_h \times \bfM^0_h \big)$, 
then $ \delta_{\bfw} - \delta_{\what} \in \bfV_h|_{\calT_h}$. Hence, we can apply inverse inequality on $ \| \delta_{\bfw} 
- \delta_{\what} \|_{L^4(\partial \calT_h)},   \| \bfv - \vhat \|_{L^4(\partial \calT_h)}$ to have}
& \le C h^{\frac12}  ( h^{\frac{1-d}{4}}\| \delta_{\bfw} - \delta_{\what} \|_{L^2(\partial \calT_h)} 
+ \| \delta_{\bfw} \|_{L^4(\partial \calT_h)}) \;  \nrm ({\bfu}, {\uhat}) \nrm_{1, h} \;  h^{\frac{1-d}{4}}  \| \bfv - \vhat \|_{L^2(\partial \calT_h)} \\
& \le C h^{\frac14} \nrm (\delta_{\bfw}, \delta_{\what}) \nrm_{1, h} \; \nrm (\bfu, \uhat) \nrm_{1, h} \;  \nrm (\bfv, \vhat) \nrm_{1, h},
\end{align*}
by \eqref{basic_3} and the fact that $d = 2, 3$. 

Finally, for $T_5$ we first break it into two terms:
\[
T_5 =  - \langle (\uhat \otimes (\delta_{\bfw} - \delta_{\what})) \bfn , \bfv \rangle_{\partial \calT_h} 
-  \langle ((\bfu - \uhat) \otimes \delta_{\bfw}) \bfn , \bfv \rangle_{\partial \calT_h}.
\]
For the first term, by the generalized H\"{o}lder's inequality, we have:
\begin{align*}
\langle (\uhat \otimes (\delta_{\bfw} - \delta_{\what})) \bfn , \bfv \rangle_{\partial \calT_h} &\le h^{\frac14}\| \uhat \|_{L^4(\partial \calT_h)} \;  
h^{-\frac12}\| \delta_{\bfw} - \delta_{\what} \|_{L^2(\partial \calT_h)}  \; h^{\frac14} \| \bfv \|_{L^4(\partial \calT_h)} \\
&\le h^{\frac14}(\| \bfu - \uhat \|_{L^4(\partial \calT_h)} + \| \bfu\|_{L^4(\partial \calT_h)})  \;  h^{-\frac12}\| \delta_{\bfw} 
- \delta_{\what} \|_{L^2(\partial \calT_h)}  \; h^{\frac14} \| \bfv \|_{L^4(\partial \calT_h)} \\
& \le C \nrm (\delta_{\bfw}, \delta_{\what}) \nrm_{1, h} \; \nrm (\bfu, \uhat) \nrm_{1, h} \;  \nrm (\bfv, \vhat) \nrm_{1, h}.
\end{align*}
In the last step we used the same argument as in $T_4$ for the term $\| \bfu - \uhat \|_{L^4(\partial \calT_h)}$ and \eqref{basic_3}. 
The second term can be estimated in a similar way. We complete the proof by combining the estimates of $T_i$.
\end{proof}

\section{Uniqueness and existence of the numerical solution.}
We will apply the Picard fixed point theorem to show the existence and uniqueness of the solution of \eqref{eq:NSEHDG}. 
To this end, we begin by rewriting the method into a more compact and appropriate form for the proof. 
If we add \eqref{eq:NSEHDGI} - \eqref{eq:NSEHDGIV}, the method can be written as: Find $(\Lm_h, \bfu_h, p_h,\uhat_h)\in 
\Gm_h\times \bfV_h\times Q_h\times\bfM_h^{\,0}$ such that
\begin{equation}\label{compact_form}
\mathcal{S}((\Lm_h, \bfu_h, p_h,\uhat_h), ((\Gm,\bfv, q,\bfmu))) + \calO((\bfu_h, \uhat_h); (\bfu_h, \uhat_h), (\bfv, \bfmu)) = (\bff, \bfv)_{\calT_h}, 
\end{equation}
for all $(\Gm,\bfv, q,\bfmu) \in \Gm_h\times \bfV_h\times Q_h\times
\bfM_h^{\,0}$. Here the bilinear form $\mathcal{S}(\cdot, \cdot)$ is defined as:
\begin{align*}
\mathcal{S}((\Lm_h, \bfu_h, p_h,\uhat_h), (\Gm,\bfv, q,\bfmu)) &:=  (\Lm_h, \Gm)_{\calT_h}+(\bfu_h, \nabla \cdot \Gm)_{\calT_h}-\langle \uhat_h, \Gm\bfn\rangle_{\partial \calT_h} - (\bfv, \nabla \cdot \nu \Lm_h)_{\calT_h} + \langle \bfmu, \nu \Lm_h \bfn \rangle_{\partial \calT_h} \\
& \;\; -(\bfu_h, \nabla q)_{\calT_h}+\langle\uhat_h\cdot \bfn,q\rangle_{\partial\calT_h} + (\bfv, \nabla p_h)_{\calT_h} - \langle \bfmu \cdot \bfn,p_h \rangle_{\partial\calT_h}\\
&\;\; + \langle \frac{\nu}{h} (\itPi_M \bfu_h - \uhat_h), \bfv - \bfmu \rangle_{\partial \calT_h}.
\end{align*}

We also define a mapping $\mathcal{F}$ as follows: for any $(\bfw, \what) \in \bfH^1(\calT_h) \times \bfL^2(\calE_h)$, $(\bfu_h, \uhat_h) 
= \mathcal{F}(\bfw, \what) \in \bfV_h \times \bfM_h$ is part of the solution $(\Lm_h, \bfu_h, p_h, \uhat_h)\in \Gm_h\times \bfV_h\times Q_h
\times \bfM_h^{\,0}$ of
\begin{equation}\label{eq:Oseen}
\mathcal{S}((\Lm_h, \bfu_h, p_h, \uhat_h), (\Gm,\bfv, q,\bfmu)) + \calO((\bfw, \what); (\bfu_h, \uhat_h), (\bfv, \bfmu)) = (\bff, \bfv)_{\calT_h},
\end{equation}
for all $(\Gm,\bfv, q,\bfmu) \in \Gm_h\times \bfV_h\times Q_h\times \bfM_h^{\,0}$. It is worth to mention that when $\bfw \in \bfH(\text{div}, \Omega), 
\nabla \cdot \bfw = 0$ and $\what = \bfw|_{\calE_h}$, the above system is a HDG scheme for the Oseen equation. Clearly, $(\bfu_h, \uhat_h)$ is 
a solution of \eqref{eq:NSEHDG} if and only if it is a fixed point of the mapping $\mathcal{F}$.  Next we present a stability result for the above scheme. 

\begin{lemma}\label{stability}
If $(\Lm_h, \bfu_h, p_h, \uhat_h)\in \Gm_h\times \bfV_h\times Q_h\times \bfM_h^{\,0}$ is a solution of \eqref{eq:Oseen}, 
then there exists a constant $C$ that solely depends on the constants $C_{\text{HDG}}$ and $C_2$ in Lemma \ref{H1_ineq}, \ref{Lq_ineq} such that
\[
\nrm (\bfu_h, \uhat_h) \nrm_{1,h} \le C \nu^{-1} \|\bff\|_{\Omega}.
\]
\end{lemma}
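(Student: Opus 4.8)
The plan is to test the equation \eqref{eq:Oseen} with the solution itself and exploit the coercivity of $\calO$ from Proposition \ref{Ocoercive}. Concretely, I would set $(\Gm, \bfv, q, \bfmu) = (\Lm_h, \bfu_h, p_h, \uhat_h)$ in \eqref{eq:Oseen}. For the bilinear form $\mathcal{S}$, the terms pairing $\bfu_h$ and $\Lm_h$ (integration by parts terms) cancel against each other, the pressure terms pairing $p_h$ with $\bfu_h, \uhat_h$ cancel, and what survives is $\|\nu^{1/2}\Lm_h\|_{\calT_h}^2 + \langle \tfrac{\nu}{h}(\itPi_M \bfu_h - \uhat_h), \bfu_h - \uhat_h\rangle_{\partial\calT_h}$. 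Using that $\itPi_M$ is the $L^2$-projection onto $\bfM_h$ and $\bfmu = \uhat_h \in \bfM_h$, one has $\langle \itPi_M \bfu_h - \uhat_h, \bfu_h - \uhat_h\rangle_{\partial\calT_h} = \|\itPi_M \bfu_h - \uhat_h\|_{\partial\calT_h}^2 \ge 0$. Meanwhile, by Proposition \ref{Ocoercive} (applicable since $\uhat_h|_{\partial\Omega} = 0$), the nonlinear term $\calO((\bfw,\what);(\bfu_h,\uhat_h),(\bfu_h,\uhat_h)) \ge 0$. Hence testing gives
\[
\nu \|\Lm_h\|_{\calT_h}^2 + \tfrac{\nu}{h}\|\itPi_M \bfu_h - \uhat_h\|_{\partial\calT_h}^2 \le (\bff, \bfu_h)_{\calT_h}.
\]

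Next I would bound the right-hand side. By Cauchy--Schwarz and the Poincaré-type inequality \eqref{basic_2} of Lemma \ref{Lq_ineq} (with $q = 2$), $(\bff, \bfu_h)_{\calT_h} \le \|\bff\|_{\Omega}\|\bfu_h\|_{\Omega} \le C_2 \|\bff\|_{\Omega} \nrm(\bfu_h, \uhat_h)\nrm_{1,h}$. Combining this with Lemma \ref{H1_ineq}, which gives $\nrm(\bfu_h, \uhat_h)\nrm_{1,h} \le C_{\text{HDG}}(\|\Lm_h\|_{\Omega} + h^{-1/2}\|\itPi_M \bfu_h - \uhat_h\|_{\partial\calT_h})$, and observing that the left-hand side of the tested identity controls exactly $\nu(\|\Lm_h\|_{\Omega}^2 + h^{-1}\|\itPi_M\bfu_h - \uhat_h\|_{\partial\calT_h}^2) \ge \nu C_{\text{HDG}}^{-2} \nrm(\bfu_h,\uhat_h)\nrm_{1,h}^2$ (up to the factor $2$ from splitting the square of a sum), I get
\[
\nu\, C_{\text{HDG}}^{-2}\, \nrm(\bfu_h, \uhat_h)\nrm_{1,h}^2 \le 2\, C_2 \|\bff\|_{\Omega}\, \nrm(\bfu_h, \uhat_h)\nrm_{1,h},
\]
and dividing through by $\nrm(\bfu_h,\uhat_h)\nrm_{1,h}$ (or noting the bound is trivial if it vanishes) yields $\nrm(\bfu_h,\uhat_h)\nrm_{1,h} \le 2 C_{\text{HDG}}^2 C_2\, \nu^{-1}\|\bff\|_{\Omega}$, which is the claim with $C = 2C_{\text{HDG}}^2 C_2$.

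The only delicate point is the bookkeeping in $\mathcal{S}((\Lm_h,\bfu_h,p_h,\uhat_h),(\Lm_h,\bfu_h,p_h,\uhat_h))$: one must check carefully that all the cross terms cancel and that the surviving terms are nonnegative — in particular that the stabilization term reduces to $\|\itPi_M\bfu_h - \uhat_h\|^2$ via the projection property. This is routine but needs care with signs. I do not expect any genuine obstacle here; the essential structural inputs (coercivity of $\calO$, the $H^1$-bound of Lemma \ref{H1_ineq}, and the embedding \eqref{basic_2}) are all already available, so the proof is a short assembly once the testing identity is written out. Note that the stated dependence of $C$ is precisely on $C_{\text{HDG}}$ and $C_2$ as claimed, and crucially $C$ is independent of $\nu$, with the $\nu^{-1}$ scaling coming out explicitly from dividing the coercive lower bound (which carries a factor $\nu$) by itself.
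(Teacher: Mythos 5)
Your proposal is correct and follows essentially the same route as the paper: test the equation with the solution itself, use the $L^2$-projection property of $\itPi_M$ and Proposition \ref{Ocoercive} to drop the nonnegative stabilization and convection terms, and then combine Lemma \ref{H1_ineq} with \eqref{basic_2} for $q=2$. The only (cosmetic) slip is that the test function should be $\Gm = \nu\Lm_h$ rather than $\Lm_h$ so that the cross terms in $\mathcal{S}$ actually cancel — which is evidently what your displayed identity $\nu\|\Lm_h\|_{\calT_h}^2 + \tfrac{\nu}{h}\|\itPi_M\bfu_h-\uhat_h\|_{\partial\calT_h}^2 \le (\bff,\bfu_h)_{\calT_h}$ already assumes.
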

\begin{proof}
Taking $(\Gm,\bfv, q,\bfmu) = (\nu \Lm_h, \bfu_h, p_h, \uhat_h)$ in \eqref{eq:Oseen}, after some algebraic manipulation, we have a simplified equation:
\begin{align*}
\nu \|\Lm_h\|^2_{\Omega} + \langle \frac{\nu}{h} (\itPi_M \bfu_h - \uhat_h), \bfu_h - \uhat_h \rangle_{\partial \calT_h} 
+ \calO((\bfw, \what); (\bfu_h, \uhat_h), (\bfu_h, \uhat_h)) &= (\bff, \bfu_h)_{\Omega}.
\end{align*}
Therefore, by Lemma \ref{H1_ineq}, Young's inequality and Proposition \ref{Ocoercive} we have
\begin{align*}
\nu \nrm (\bfu_h, \uhat_h) \nrm^2_{1,h} &\le 2 C^2_{\text{HDG}} \nu (\|\Lm_h\|^2_{\Omega} + \frac{1}{h} \|\itPi_M \bfu_h - \uhat_h\|^2_{\partial \calT_h}) \\
& \le 2  C^2_{\text{HDG}} \left( \nu \| \Lm_h\|^2_{\Omega} + \langle \frac{\nu}{h} (\itPi_M \bfu_h - \uhat_h), \bfu_h - \uhat_h \rangle_{\partial \calT_h} 
+ \calO((\bfw, \what); (\bfu_h, \uhat_h), (\bfu_h, \uhat_h)) \right) \\
& = 2 C^2_{\text{HDG}} (\bff, \bfu_h)_{\Omega} \le 2 C^2_{\text{HDG}} \|\bfu_h\|_{\Omega} \|\bff\|_{\Omega} \le 2 C_2 C^2_{\text{HDG}} 
\nrm (\bfu_h, \uhat_h) \nrm_{1,h} \|\bff\|_{\Omega}.
\end{align*}
The last step is by Lemma \ref{Lq_ineq} with $q = 2$. This completes the proof with $C = 2 C_2 C^2_{\text{HDG}}$. 
\end{proof}

Inspired by the above stability result, we define a subspace of $\bfV_h \times \bfM_h^0$:
\[
\mathcal{K}_h := \{(\bfv,\bfmu)\in \bfV_h \times \bfM_h^0:\nrm(\bfv,\bfmu)\nrm_{1,h}\leq C_2C_{\rm HDG}^2\nu^{-1}\|\bff\|_{\Omega}\}.
\]

We are now ready to give the proof of the existence and uniqueness result for the HDG scheme \eqref{eq:NSEHDG}/\eqref{compact_form}.

\begin{proof} {\bf of Theorem \ref{thm:existence}.} \

Clearly, $\mathcal{F}$ maps $\bfV_h \times \bfM_h^0$ into $\mathcal{K}_h$ hence it maps  $\mathcal{K}_h$ into itself. 
In order to show the existence and uniqueness of the solution of \eqref{eq:NSEHDG}/\eqref{compact_form}, it suffices to show that 
$\mathcal{F}$ is a contraction on the subspace $\mathcal{K}_h$. To this end, let $(\bfw_1,\what_1), (\bfw_2,\what_2) \in K_h$ and 
$(\Lm_i, \bfu_i, p_i, \uhat_i)$ are the solutions of the problem \eqref{eq:Oseen} with $(\bfw,\what) = (\bfw_i, \what_i)$, $(i = 1, 2)$. So we have
 $(\bfu_1,\uhat_1):=\calF(\bfw_1,\what_1)$ and $(\bfu_2,\uhat_2):=\calF(\bfw_2,\what_2)$. 
If we set $\delta_{\Lm}:=L_1-L_2$, $\delta_{\bfu}:=\bfu_1-\bfu_2$,
$\delta_p:=p_1-p_2$ and $\delta_{\uhat}:=\uhat_1-\uhat_2$, due to the linearity of the operator $\mathcal{S}$, we have
\[
\mathcal{S}( (\delta_{\Lm}, \delta_{\bfu}, \delta_p, \delta_{\uhat}), (\Gm,\bfv, q,\bfmu)) 
+ \calO((\bfw_1, \what_1); (\bfu_1, \uhat_1), (\bfv, \bfmu)) - \calO((\bfw_2, \what_2); (\bfu_2, \uhat_2), (\bfv, \bfmu)) = 0,
\]
for all $(\Gm,\bfv, q,\bfmu) \in \Gm_h\times \bfV_h\times Q_h\times \bfM_h^{\,0}$. Taking $(\Gm,\bfv, q,\bfmu) 
= (\nu \delta_{\Lm}, \delta_{\bfu}, \delta_p, \delta_{\uhat})$ into the above identity, after some algebraic manipulations, we obtain
\[
\nu \|\delta_{\Lm}\|^2_{\Omega} + \langle \frac{\nu}{h} (\itPi_M \delta_{\bfu} - \delta_{\uhat}),  
\delta_{\bfu} - \delta_{\uhat} \rangle_{\partial \calT_h} =  - \calO((\bfw_1, \what_1); (\bfu_1, \uhat_1), 
(\delta_{\bfu}, \delta_{\uhat})) + \calO((\bfw_2, \what_2); (\bfu_2, \uhat_2), (\delta_{\bfu}, \delta_{\uhat})).
\]
Or
\begin{align*}
\nu \|\delta_{\Lm}\|^2_{\Omega} + \langle \frac{\nu}{h} (\itPi_M \delta_{\bfu} - \delta_{\uhat}),  \delta_{\bfu} - \delta_{\uhat} \rangle_{\partial \calT_h} + & \calO((\bfw_2, \what_2); (\delta_{\bfu}, \delta_{\uhat}), (\delta_{\bfu}, \delta_{\uhat})) = \\
& \calO((\bfw_2, \what_2); (\bfu_1, \uhat_1), (\delta_{\bfu}, \delta_{\uhat}))  -  \calO((\bfw_1, \what_1); (\bfu_1, \uhat_1), (\delta_{\bfu}, \delta_{\uhat})).
\end{align*}

By Lemma \ref{H1_ineq} Young's inequality and Proposition \ref{Ocoercive} we have
\begin{align*}
\nu \nrm (\delta_{\bfu}, \delta_{\uhat}) \nrm^2_{1,h} &\le 2 C^2_{\text{HDG}} (\nu \|\delta_{\Lm}\|^2_{\Omega} 
+ \frac{\nu}{h} \|\itPi_M \delta_{\bfu} - \delta_{\uhat}\|^2_{\partial \calT_h}) \\
& \le 2 C^2_{\text{HDG}} \left(\nu \|\delta_{\Lm}\|^2_{\Omega} + \langle \frac{\nu}{h} (\itPi_M \delta_{\bfu} - \delta_{\uhat}),  
\delta_{\bfu} - \delta_{\uhat} \rangle_{\partial \calT_h} +  \calO((\bfw_2, \what_2); (\delta_{\bfu}, \delta_{\uhat}), (\delta_{\bfu}, \delta_{\uhat}))\right)\\
& =  2 C^2_{\text{HDG}} \; \Big( \calO((\bfw_2, \what_2); (\bfu_1, \uhat_1), (\delta_{\bfu}, \delta_{\uhat}))  
-  \calO((\bfw_1, \what_1); (\bfu_1, \uhat_1), (\delta_{\bfu}, \delta_{\uhat})) \Big) \\
& \le 2 C_{\calO}  C^2_{\text{HDG}} \nrm (\bfw_1 - \bfw_2, \what_1 - \what_2 \nrm_{1,h} \;  \nrm (\bfu_1, \uhat_1) \nrm_{1,h} \; 
\nrm (\delta_{\bfu}, \delta_{\uhat}) \nrm_{1,h} \quad \text{by Lemma \ref{O-estimates},} \\
& \le 2 C_{\calO} C_2 C^4_{\text{HDG}} \nu^{-1} \|\bff\|_{\Omega} \; \nrm (\bfw_1 - \bfw_2, \what_1 - \what_2 \nrm_{1,h} \;  
\nrm (\delta_{\bfu}, \delta_{\uhat}) \nrm_{1,h} \quad \;\; \, \text{by Lemma \ref{stability}}.
\end{align*}
Therefore, we have shown that
\[
\nrm (\delta_{\bfu}, \delta_{\uhat}) \nrm_{1,h} \le 2 C_{\calO} C_2 C^4_{\text{HDG}} \nu^{-2} \|\bff\|_{\Omega} \; 
\nrm (\bfw_1 - \bfw_2, \what_1 - \what_2 \nrm_{1,h}.
\]
Obviously, the above bound implies that $\mathcal{F}$ is a contraction on $\mathcal{K}_h$ equipped with $\nrm \cdot \nrm_{1,h}$ provided
\[
\|\bff\|_{\Omega} \le \frac{\nu^2}{ 2 C_{\calO} C_2 C^4_{\text{HDG}}}.
\]
By the fixed point theorem, there is a unique fixed point $(\bfu_h, \uhat_h) \in \mathcal{K}_h$ of the mapping $\mathcal{F}$. 
It is also the unique solution of the system \eqref{eq:NSEHDG}. This completes the proof.
\end{proof}

\section{Proof of the error estimates}
In this section, we provide the detailed proof of the main error estimates for all unknowns. We proceed in several steps.

{\bf Step 1: Error equations.} We begin by introducing the error equations that we are going to use in the analysis. For convention, we introduce the following notations for the errors:
\begin{align*}
e_{\Lm} &:= \itPi_G \Lm - \Lm_h, \quad  && e_{\bfu} := \itPi_V \bfu - \bfu_h, \quad && e_p := \itPi_Q p - p_h, \quad && e_{\uhat} := \itPi_M \bfu - \bfu_h, \\
\delta_{\Lm} &:= \Lm - \itPi_G \Lm, \quad && \delta_{\bfu} := \bfu - \itPi_V \bfu, \quad && \delta_p := p - \itPi_Q p, \quad  && \delta_{\uhat} := \bfu - \itPi_M \bfu.
\end{align*} 
Here $\itPi_G, \itPi_V, \itPi_Q, \itPi_M$ are the $L^2-$projections onto $\Gm_h, \bfV_h, Q_h, \bfM_h$ respectively. In addition to the basic inequalities listed in Lemma \ref{Lq_ineq}, we will frequently use the following basic inequalities as well:
\begin{subequations}
\begin{align}
\label{basic_5}
\|q\|_{F} &\le C h^{-\frac12}_K \|q\|_{K}, && \text{for all $q \in P_l(K), (l \ge 0)$,}\\
\label{basic_6}
\|D^{m} (q - \itPi_l q) \|_{K} &\le C h^{l+1 - m}_K \|q\|_{l+1, K},  && \text{for all $q \in H^{l+1}(K)$, $0 \le m \le l$,} \\
\label{basic_6a}
\|q - \itPi_l q \|_{F} &\le C h^{l+\frac12}_K \|q\|_{l+1, K},  && \text{for all $q \in H^{l+1}(K)$, $0 \le m \le l$,} \\
\label{basic_PM}
\|q - \itPi_M q\|_{F} & \le C h^{k+\frac12} \|q\|_{k+1, K} && \text{for all $q \in H^{l+1}(K)$,}
\end{align}
\end{subequations}
Here $\itPi_l$ denotes the $L^2-$projection onto $P_l(K)$, $F$ denotes any face of $K$. In addition, we have the following estimate for the projections under the triple norm $\nrm \cdot \nrm_{1,h}$:
\begin{proposition}\label{basic_7}
For any $\bfu \in \bfH^1(\Omega)$, we have
\begin{equation}\label{eq:triple_norm_est}
\nrm (\itPi_V \bfu, \itPi_M \bfu ) \nrm_{1, h} \le C \|\bfu\|_{1, \Omega}.
\end{equation}
\end{proposition}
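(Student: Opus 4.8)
The plan is to unpack the definition of $\nrm(\cdot,\cdot)\nrm_{1,h}$ and bound the two contributions separately, using standard $L^2$-projection approximation and stability estimates together with the triangle inequality. Recall
\[
\nrm(\itPi_V\bfu,\itPi_M\bfu)\nrm_{1,h}^2 = \|\nabla\itPi_V\bfu\|_{\calT_h}^2 + \|h_K^{-1/2}(\itPi_V\bfu-\itPi_M\bfu)\|_{\partial\calT_h}^2.
\]
For the volume term, I would write $\nabla\itPi_V\bfu = \nabla(\itPi_V\bfu-\bfu) + \nabla\bfu$ on each $K$; the first piece is controlled by $\|\nabla(\itPi_V\bfu-\bfu)\|_K \le C\|\bfu\|_{1,K}$ by the stability/approximation estimate \eqref{basic_6} with $l=m=1$ (or directly by $H^1$-stability of the $L^2$-projection on shape-regular elements), and the second piece is trivially $\|\nabla\bfu\|_K$. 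Summing over $K$ gives $\|\nabla\itPi_V\bfu\|_{\calT_h}\le C\|\bfu\|_{1,\Omega}$.

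For the face term, on each $F\subset\partial K$ I would insert $\bfu$ and estimate
\[
\|\itPi_V\bfu-\itPi_M\bfu\|_F \le \|\itPi_V\bfu-\bfu\|_F + \|\bfu-\itPi_M\bfu\|_F.
\]
Since $\itPi_M\bfu$ restricted to $F$ equals the $L^2(F)$-projection of $\bfu|_F$ onto $\bfP_k(F)$ (because $\itPi_M$ is defined facewise), one could also note $\itPi_M\bfu = \itPi_M(\itPi_V\bfu)$ is not needed; instead both trace terms are handled by a trace inequality followed by $L^2$-projection estimates. Concretely, $\|\bfu-\itPi_V\bfu\|_F \le C(h_K^{-1/2}\|\bfu-\itPi_V\bfu\|_K + h_K^{1/2}\|\nabla(\bfu-\itPi_V\bfu)\|_K)$ by the standard scaled trace inequality, and then \eqref{basic_6} with $l=1$ bounds the right side by $Ch_K^{1/2}\|\bfu\|_{1,K}$; similarly $\|\bfu-\itPi_M\bfu\|_F \le Ch_K^{1/2}\|\bfu\|_{1,K}$ via the same trace inequality applied to $\bfu-\itPi_M^F\bfu$ where $\itPi_M^F$ is extended appropriately, or more simply by observing that $\bfu-\itPi_M\bfu$ on $F$ is the $L^2(F)$-error of projecting a trace in $H^{1/2}$, which after the trace inequality is again $O(h_K^{1/2}\|\bfu\|_{1,K})$. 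Multiplying by $h_K^{-1/2}$ and squaring then gives $\|h_K^{-1/2}(\itPi_V\bfu-\itPi_M\bfu)\|_{\partial K}^2\le C\|\bfu\|_{1,K}^2$, and summing over $K\in\calT_h$ completes the estimate.

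The main (and essentially only) delicate point is the face term: one must be careful that the constant in the scaled trace inequality and in the $L^2$-projection approximation bound depends only on shape-regularity (and the quasi-uniformity already assumed), so that the powers of $h_K$ cancel cleanly and no negative power of $h$ survives. There is no nonlinearity or smallness condition involved here; it is a purely approximation-theoretic estimate, and the only thing to verify is that both $\itPi_V\bfu$ and $\itPi_M\bfu$ track $\bfu$ on $\partial K$ to the same order $h_K^{1/2}$ in the $L^2(F)$-norm, which follows from \eqref{basic_6}–\eqref{basic_6a} and the trace inequality.
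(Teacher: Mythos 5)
Your proof is correct and takes essentially the same route as the paper: the volume term reduces to the $H^1$-stability of $\itPi_V$ (which the paper establishes exactly via the mechanism you mention parenthetically --- inverse inequality applied to $\itPi_V\bfu-\overline{\bfu}$ plus the elementwise Poincar\'e inequality; note that your primary citation of \eqref{basic_6} with $l=m=1$ is not the right reference, as that bound requires $\bfu\in \bfH^2$), and the face term is handled by inserting $\bfu$, applying the triangle inequality, and then the scaled trace inequality together with $L^2$-projection approximation bounds, just as in the paper. The only cosmetic difference is that the paper channels both face contributions through $\|\bfu-\itPi_k\bfu\|_{\partial K}$ before applying the trace inequality, whereas you treat the two trace errors separately; both yield the same $O(h_K^{1/2}\|\bfu\|_{1,K})$ facewise bound.
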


\begin{proof}
By the definition of the norm $\nrm \cdot \nrm_{1,h}$, we have
\[
\nrm (\itPi_V \bfu, \itPi_M \bfu ) \nrm_{1, h} \le 2( \|\nabla \itPi_V \bfu\|_{\calT_h} + h^{-\frac12} \|\itPi_V \bfu - \itPi_M \bfu \|_{\partial \calT_h}).
\]
We are going to bound each of the above terms by $\|\bfu\|_{1, \Omega}$. For the first term, we have
\begin{align*}
\|\nabla \itPi_V \bfu\|_{\calT_h} &= \|\nabla ( \itPi_V \bfu - \bar{\bfu})\|_{\calT_h} \le C h^{-1} \|\itPi_V \bfu - \bar{\bfu}\|_{0, \calT_h}, \\
\intertext{here $\bar{\bfu}$ denotes the average of $\bfu$ within each element $K \in \calT_h$, the inequality is by the inverse inequality of the polynomial spaces.} 
\|\nabla \itPi_V \bfu\|_{\calT_h}  & \le C \|\bfu\|_{1, h} = C \|\bfu\|_{1, \Omega}, 
\end{align*}
by the Poincar\'{e} inequality for each $K \in \calT_h$.

For the second term, applying a triangle inequality we have
\begin{align*}
h^{-\frac12} \|\itPi_V \bfu - \itPi_M \bfu \|_{\partial \calT_h} & \le h^{-\frac12} ( \| \bfu - \itPi_V \bfu \|_{\partial \calT_h} + \| \bfu - \itPi_M \bfu \|_{\partial \calT_h}) \le 2 h^{-\frac12} \| \bfu - \itPi_k \bfu \|_{\partial \calT_h},  \\
\intertext{here $\itPi_k$ denotes the $L^2-$projection onto $\bfP_k(K)$ for each $K \in \calT_h$,}
& \le C ( \|\nabla (\bfu - \itPi_k \bfu)\|_{\calT_h} + h^{-1} \| \bfu - \itPi_k \bfu\|_{\calT_h}) \quad \text{by the trace inequality,} \\
&\le C \|\bfu\|_{1, \Omega},
\end{align*}
the last step is by a similar argument as for the first term and \eqref{basic_7}. This completes the proof.
\end{proof}

It is not hard to verify that the exact solution $(\bfu, \Lm, p, \bfu|_{\calE_h})$ satisfies the following equation:
\[
\mathcal{S}((\Lm, \bfu, p,\uhat), ((\Gm,\bfv, q,\bfmu))) + \calO((\bfu, \uhat); (\bfu, \uhat), (\bfv, \bfmu)) = (\bff, \bfv)_{\calT_h} + \langle \frac{\nu}{h} \delta_{\uhat} \, , \, \bfv - \mu \rangle_{\partial \calT_h} , 
\]
for all $(\Gm,\bfv, q,\bfmu) \in \Gm_h\times \bfV_h\times Q_h\times
\bfM_h^{\,0}$. Subtracting \eqref{compact_form}, we have
\begin{align*}
\mathcal{S}((\Lm, \bfu, p, \uhat), ((\Gm,\bfv, q,\bfmu))) &- \mathcal{S}((\Lm_h, \bfu_h, p_h, \uhat_h), ((\Gm,\bfv, q,\bfmu))) \\
& + \calO((\bfu, \bfu); (\bfu, \bfu), (\bfv, \bfmu)) - \calO((\bfu_h, \uhat_h); (\bfu_h, \uhat_h), (\bfv, \bfmu)) = \langle \frac{\nu}{h} \delta_{\uhat} \, , \, \bfv - \mu \rangle_{\partial \calT_h}, \\
\intertext{by the linearility of the first operator $\mathcal{S}$, we have}
\mathcal{S}((e_{\Lm}, e_{\bfu}, e_p, e_{\uhat}), ((\Gm,\bfv, q,\bfmu))) &+ \calO((\bfu, \bfu); (\bfu, \bfu), (\bfv, \bfmu)) - \calO((\bfu_h, \uhat_h); (\bfu_h, \uhat_h), (\bfv, \bfmu)) = \\
& - \mathcal{S}((\delta_{\Lm}, \delta_{\bfu}, \delta_p, \delta_{\uhat}), ((\Gm,\bfv, q,\bfmu))) + \langle \frac{\nu}{h} \delta_{\uhat} \, , \, \bfv - \mu \rangle_{\partial \calT_h}.
\end{align*}
Finally, by the definition of the operator $\mathcal{S}$ and the orthogonality property of the $L^2-$projections we have the error equation:
\begin{equation}\label{eq:error}
\begin{aligned}
\mathcal{S}((e_{\Lm}, e_{\bfu}, e_p, e_{\uhat}), ((\Gm,\bfv, q,\bfmu))) + \calO((\bfu, \bfu); (\bfu, \bfu), (\bfv, \bfmu)) - & \calO((\bfu_h, \uhat_h); (\bfu_h, \uhat_h), (\bfv, \bfmu)) = \\
& \langle \nu \delta_{\Lm} \bfn - \delta_p \bfn - \frac{\nu}{h} \itPi_M \delta_{\bfu} \, , \, \bfv - \mu \rangle_{\partial \calT_h},
\end{aligned}
\end{equation}
for all $(\Gm,\bfv, q,\bfmu) \in \Gm_h\times \bfV_h\times Q_h\times \bfM_h^{\,0}$.

{\bf Step 2: Estimates for $e_{\Lm}$, $e_{\bfu}$.} We first apply an energy argument to bound the errors $e_{\Lm}$, $e_{\bfu}$, which can be stated as follows:
\begin{lemma}\label{estimate_EL}
If the exact solution $\bfu, \Lm, p$ is smooth enough, and $\|\bfu\|_{1, \Omega}$ is small enough, we have
\[
\|e_{\bfu}\|_{\Omega} + \nrm (e_{\bfu}, e_{\uhat}) \nrm_{1, h} \le C_{\text{HDG}} ( \|e_{\Lm}\|_{\Omega}  + h^{-\frac12} \|\itPi_M e_{\bfu} - e_{\uhat}\|_{\partial \calT_h} )\le \mathcal{C} h^{k+1}.
\]
Here the constant $\mathcal{C}$ depends on $\|\bfu\|_{k+2, \Omega}, \|\bfu\|_{\infty, \Omega}, \|p\|_{k+1, \Omega}, \nu$ and $k$ but independent of $h$. 
\end{lemma}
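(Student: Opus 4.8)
The plan is to carry out a standard energy argument on the error equation \eqref{eq:error}. First I would test \eqref{eq:error} with $(\Gm,\bfv, q,\bfmu) = (\nu e_{\Lm}, e_{\bfu}, e_p, e_{\uhat})$. By the same algebraic manipulations used in the proof of Lemma \ref{stability} (the divergence/trace terms in $\mathcal{S}$ cancel against each other, and the pressure terms drop out), the left-hand side collapses to
\[
\nu\|e_{\Lm}\|_{\Omega}^2 + \Big\langle \tfrac{\nu}{h}(\itPi_M e_{\bfu} - e_{\uhat}),\, e_{\bfu} - e_{\uhat}\Big\rangle_{\partial\calT_h} + \calO((\bfu,\bfu);(\bfu,\bfu),(e_{\bfu},e_{\uhat})) - \calO((\bfu_h,\uhat_h);(\bfu_h,\uhat_h),(e_{\bfu},e_{\uhat})).
\]
The nonlinear difference is the delicate part. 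I would split it in the usual telescoping fashion,
\[
\calO((\bfu,\bfu);(\bfu,\bfu),\cdot) - \calO((\bfu_h,\uhat_h);(\bfu_h,\uhat_h),\cdot)
= \big[\calO((\bfu,\bfu);(\bfu,\bfu),\cdot) - \calO((\bfu,\bfu);(\bfu_h,\uhat_h),\cdot)\big]
+ \big[\calO((\bfu,\bfu);(\bfu_h,\uhat_h),\cdot) - \calO((\bfu_h,\uhat_h);(\bfu_h,\uhat_h),\cdot)\big],
\]
using linearity of $\calO$ in its second argument on the first bracket and Lemma \ref{O-estimates} on the second. Then I would further decompose $(\bfu,\bfu) - (\bfu_h,\uhat_h) = (\delta_{\bfu},\delta_{\uhat}) + (e_{\bfu},e_{\uhat})$ (and similarly in the second argument), so that the pieces involving $(e_{\bfu},e_{\uhat})$ in the convecting slot can be moved to the left using Proposition \ref{Ocoercive}: namely $\calO((\bfu,\bfu);(e_{\bfu},e_{\uhat}),(e_{\bfu},e_{\uhat})) \ge 0$ after adding and subtracting, and the cross term $\calO((e_{\bfu},e_{\uhat});(\bfu_h,\uhat_h),(e_{\bfu},e_{\uhat}))$ is controlled by Lemma \ref{O-estimates} together with the stability bound $\nrm(\bfu_h,\uhat_h)\nrm_{1,h} \le C\nu^{-1}\|\bff\|_{\Omega} \le C\|\bfu\|_{1,\Omega}$ from Theorem \ref{thm:existence}/Lemma \ref{stability}; this is exactly where the smallness of $\|\bfu\|_{1,\Omega}$ (equivalently $\|\bff\|_{\Omega}$) is spent, to absorb a $\nrm(e_{\bfu},e_{\uhat})\nrm_{1,h}^2$ term into the coercive left-hand side. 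The remaining consistency terms only involve the projection errors $(\delta_{\bfu},\delta_{\uhat})$ in one slot and $(e_{\bfu},e_{\uhat})$ in the test slot; each is bounded by Lemma \ref{O-estimates} (or direct Cauchy--Schwarz/H\"older on its constituent integrals) by $C\,\nrm(\delta_{\bfu},\delta_{\uhat})\nrm_{1,h}\,\nrm(\bfu,\bfu \text{ or }\bfu_h)\nrm_{1,h}\,\nrm(e_{\bfu},e_{\uhat})\nrm_{1,h}$, and a Young's inequality again sends the $e$-factor to the left.

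On the right-hand side of \eqref{eq:error}, the consistency term $\langle \nu\delta_{\Lm}\bfn - \delta_p\bfn - \tfrac{\nu}{h}\itPi_M\delta_{\bfu},\, e_{\bfu} - e_{\uhat}\rangle_{\partial\calT_h}$ is estimated by Cauchy--Schwarz on $\partial\calT_h$, a discrete trace inequality, and the approximation estimates \eqref{basic_6a}, \eqref{basic_PM}: it is bounded by $C(h^{k+\frac12}\|\Lm\|_{k+1,\Omega} + h^{k+\frac12}\|p\|_{k+1,\Omega} + \nu h^{k+\frac12}\|\bfu\|_{k+1,\Omega})\, h^{-\frac12}\|e_{\bfu}-e_{\uhat}\|_{\partial\calT_h}$, and since $h^{-\frac12}\|e_{\bfu}-e_{\uhat}\|_{\partial\calT_h} \le \nrm(e_{\bfu},e_{\uhat})\nrm_{1,h}$, a final Young's inequality absorbs the $e$-factor. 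Collecting everything and using that the coefficient of $\nrm(e_{\bfu},e_{\uhat})\nrm_{1,h}^2$ on the left stays positive (thanks to the smallness assumption and Lemma \ref{H1_ineq}, which lets us replace $\nu\|e_{\Lm}\|_{\Omega}^2 + \tfrac{\nu}{h}\|\itPi_M e_{\bfu} - e_{\uhat}\|_{\partial\calT_h}^2$ by $c\,\nu\,\nrm(e_{\bfu},e_{\uhat})\nrm_{1,h}^2$), we arrive at
\[
\nu\|e_{\Lm}\|_{\Omega}^2 + \tfrac{\nu}{h}\|\itPi_M e_{\bfu}-e_{\uhat}\|_{\partial\calT_h}^2 + \nu\,\nrm(e_{\bfu},e_{\uhat})\nrm_{1,h}^2 \le \mathcal{C}^2 h^{2(k+1)}.
\]
Finally, Lemma \ref{H1_ineq} applied to $(e_{\Lm}, e_{\bfu}, e_{\uhat})$ — which satisfies the homogeneous analogue of \eqref{eq:NSEHDGI} by orthogonality of the projections — gives $\nrm(e_{\bfu},e_{\uhat})\nrm_{1,h} \le C_{\text{HDG}}(\|e_{\Lm}\|_{\Omega} + h^{-\frac12}\|\itPi_M e_{\bfu}-e_{\uhat}\|_{\partial\calT_h})$, and Lemma \ref{Lq_ineq} with $q=2$ (i.e. \eqref{basic_2}) gives $\|e_{\bfu}\|_{\Omega} \le C_2\,\nrm(e_{\bfu},e_{\uhat})\nrm_{1,h}$; combining these with the displayed bound yields the stated chain of inequalities.

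I expect the main obstacle to be the bookkeeping in the nonlinear difference: organizing the six-way split of $\calO((\bfu,\bfu);(\bfu,\bfu),\cdot) - \calO((\bfu_h,\uhat_h);(\bfu_h,\uhat_h),\cdot)$ into (i) a sign-definite coercive piece, (ii) pieces absorbable via smallness of $\|\bfu\|_{1,\Omega}$ and the a priori bound on $(\bfu_h,\uhat_h)$, and (iii) genuine consistency pieces controlled by projection errors — while keeping track of which slot of $\calO$ each error quantity sits in, since Lemma \ref{O-estimates} only provides continuity with respect to the \emph{convecting} argument. A secondary subtlety is that $\calO$ as used with convecting field $(\bfu,\bfu)$ (the exact velocity, which is divergence-free and $H^1$-conforming) must be shown to land in the admissible class $\widetilde{\bfH_0^1}(\Omega) + (\bfV_h\times\bfM_h^0)$ demanded by Lemma \ref{O-estimates}, and the second argument $(\bfu_h,\uhat_h)$ must too; this is immediate from the definitions but should be remarked. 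Everything else is routine application of Cauchy--Schwarz, trace and inverse inequalities, and the approximation estimates \eqref{basic_6}--\eqref{basic_PM}.
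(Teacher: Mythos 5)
Your skeleton---testing the error equation with $(\nu e_{\Lm}, e_{\bfu}, e_p, e_{\uhat})$, telescoping the nonlinear difference, invoking Proposition \ref{Ocoercive} for the sign-definite piece, spending the smallness assumption to absorb the quadratic-in-error piece, and closing with Lemma \ref{H1_ineq} and \eqref{basic_2}---is exactly the paper's. The gap is in how you treat the pieces of the nonlinear difference that carry the projection error $(\delta_{\bfu},\delta_{\uhat})$, whether it sits in the convecting slot or in the second slot. You bound each such piece by $C\,\nrm(\delta_{\bfu},\delta_{\uhat})\nrm_{1,h}\,\nrm(\cdot,\cdot)\nrm_{1,h}\,\nrm(e_{\bfu},e_{\uhat})\nrm_{1,h}$ via Lemma \ref{O-estimates}. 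But $\nrm(\delta_{\bfu},\delta_{\uhat})\nrm_{1,h}$ is only $O(h^{k})$, not $O(h^{k+1})$: the trace space $\bfM_h$ has degree $k$ while $\bfV_h$ has degree $k+1$, so $\|\delta_{\bfu}-\delta_{\uhat}\|_{\partial\calT_h}=\|\itPi_M\bfu-\itPi_V\bfu\|_{\partial\calT_h}=O(h^{k+1/2})$, and the $h^{-1/2}$ weight inside $\nrm\cdot\nrm_{1,h}$ leaves $O(h^{k})$. Your argument therefore yields only $\nrm(e_{\bfu},e_{\uhat})\nrm_{1,h}\le C h^{k}$, one full order short of the claim.

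The paper flags precisely this point (``if we directly apply the continuity property of the operator $\mathcal{O}$ \eqref{O_estimate1}, we will only obtain suboptimal order of convergence'') and handles the two offending terms, its $T_3$ and $T_4$, by a term-by-term refinement: each constituent integral is estimated with $\|\bfu\|_{L^\infty}$ (or $\|\itPi_V\bfu\|_{\infty},\|\itPi_M\bfu\|_{\infty}$) placed on the convecting factor, so the projection errors enter through $\|\delta_{\bfu}\|_{K}=O(h^{k+2})$ in the volume and through $h^{1/2}\|\delta_{\bfu}-\delta_{\uhat}\|_{\partial K}=O(h^{k+1})$ on the faces, instead of through the $L^4$-trace machinery underlying Lemma \ref{O-estimates}, which costs an extra $h^{-1/4}$ per factor. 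One sub-term ($T_{33}$ in the paper) additionally requires inserting the vanishing jump term $\langle\tfrac12(\bfu\otimes\delta_{\uhat})\bfn,\,e_{\uhat}\rangle_{\partial\calT_h}=0$ before it can be estimated, together with the bound $h^{1/2}\|e_{\uhat}\|_{\partial\calT_h}\le C\nrm(e_{\bfu},e_{\uhat})\nrm_{1,h}$. This $L^\infty$-weighted refinement is the reason the constant $\mathcal{C}$ in the statement depends on $\|\bfu\|_{\infty,\Omega}$; your proposal never uses that norm in the nonlinear estimates, which is the tell that the optimal rate cannot come out of it as written.
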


\begin{proof}
Taking $(\Gm,\bfv, q,\bfmu) = (e_{\Lm}, e_{\bfu}, e_p, e_{\uhat})$ in the error equation \eqref{eq:error}, the resulting equation can be simplified as

\begin{align*}
\nu \|e_{\Lm}\|^2_{\Omega} + \frac{\nu}{h} \|\itPi_M e_{\bfu} - e_{\uhat}\|^2_{\partial \calT_h} + \calO((\bfu, \bfu); (\bfu, \bfu), (e_{\bfu}, e_{\uhat})) - & \calO((\bfu_h, \uhat_h); (\bfu_h, \uhat_h),  (e_{\bfu}, e_{\uhat})) = \\
& \langle \nu \delta_{\Lm} \bfn - \delta_p \bfn - \frac{\nu}{h} \itPi_M \delta_{\bfu} \, , \, e_{\bfu} - e_{\uhat} \rangle_{\partial \calT_h},
\end{align*}
or
\begin{equation}\label{eq:energy}
\begin{aligned}
\nu \|e_{\Lm}\|^2_{\Omega} + \frac{\nu}{h} \|\itPi_M e_{\bfu} - e_{\uhat}\|^2_{\partial \calT_h} =  & \calO((\bfu_h, \uhat_h); (\bfu_h, \uhat_h), (e_{\bfu}, e_{\uhat})) - \calO((\bfu, \bfu); (\bfu, \bfu), (e_{\bfu}, e_{\uhat})) \\
& + \langle \nu \delta_{\Lm} \bfn - \delta_p \bfn - \frac{\nu}{h} \itPi_M \delta_{\bfu} \, , \, e_{\bfu} - e_{\uhat} \rangle_{\partial \calT_h}.
\end{aligned}
\end{equation}
Let us first estimate the last term of the above equation. Applying the Cauchy-Schwarz inequality, we have
\begin{align*}
\langle \nu \delta_{\Lm} \bfn - \delta_p \bfn - \frac{\nu}{h} \itPi_M \delta_{\bfu} \, , \, e_{\bfu} - e_{\uhat} \rangle_{\partial \calT_h} & \le h^{\frac12} \Big(\nu \|\delta_{\Lm}\|_{\partial \calT_h} + \|\delta_p\|_{\partial \calT_h} + \nu h^{-1} \| \itPi_M \delta_{\bfu}\|_{\partial \calT_h}\Big) h^{-\frac12} \|e_{\bfu} - e_{\uhat}\|_{\partial \calT_h} \\
& \le h^{\frac12} \Big(\nu \|\delta_{\Lm}\|_{\partial \calT_h} + \|\delta_p\|_{\partial \calT_h} + \nu h^{-1} \| \delta_{\bfu}\|_{\partial \calT_h}\Big) h^{-\frac12} \|e_{\bfu} - e_{\uhat}\|_{\partial \calT_h} \\
& \le C \Big( \|\delta_{\Lm}\|_{\Omega} + \|\delta_p\|_{\Omega} + h^{-1} \| \delta_{\bfu}\|_{\Omega}\Big) \nrm (e_{\bfu}, e_{\uhat})\nrm_{1, h} \quad \text{by \eqref{basic_5}}, \\
& \le C h^{k+1} (\|\Lm\|_{k+1, \Omega} + \|p\|_{k+1, \Omega} + \|\bfu\|_{k+2, \Omega}) \nrm (e_{\bfu}, e_{\uhat})\nrm_{1, h} \quad \text{by \eqref{basic_6}}.
\end{align*}
The main effort in the analysis is to have an optimal estimate for the nonlinear terms, we rewrite these two terms into four parts:
\begin{align*}
& \calO((\bfu_h, \uhat_h); (\bfu_h, \uhat_h), (e_{\bfu}, e_{\uhat})) - \calO((\bfu, \bfu); (\bfu, \bfu), (e_{\bfu}, e_{\uhat})) \quad && \\
= \; &  \calO((\bfu_h, \uhat_h); (\bfu_h, \uhat_h), (e_{\bfu}, e_{\uhat})) - \calO((\bfu_h, \uhat_h); (\itPi_V \bfu,\itPi_M \bfu), (e_{\bfu}, e_{\uhat})) && : T_1 \quad && \\
+ \; & \calO((\bfu_h, \uhat_h); (\itPi_V \bfu,\itPi_M \bfu), (e_{\bfu}, e_{\uhat})) - \calO((\itPi_V \bfu,\itPi_M \bfu); (\itPi_V \bfu,\itPi_M \bfu), (e_{\bfu}, e_{\uhat})) && :T_2 \\
+ \; & \calO((\itPi_V \bfu,\itPi_M \bfu); (\itPi_V \bfu,\itPi_M \bfu), (e_{\bfu}, e_{\uhat})) - \calO(( \bfu, \bfu); (\itPi_V \bfu,\itPi_M \bfu), (e_{\bfu}, e_{\uhat})) && : T_3 \\
+ \; & \calO((\bfu, \bfu); (\itPi_V \bfu,\itPi_M \bfu), (e_{\bfu}, e_{\uhat})) - \calO(( \bfu, \bfu); ( \bfu, \bfu), (e_{\bfu}, e_{\uhat})) && : T_4
\end{align*}
Notice that the operator $\mathcal{O}$ is linear with respect to the last two components, so we have
\[
T_1 = - \calO((\bfu_h, \uhat_h); (e_{\bfu}, e_{\uhat}), (e_{\bfu}, e_{\uhat})) \le 0, 
\]
by Proposition \ref{Ocoercive}. For $T_2$, we apply the estimate \eqref{O_estimate1} in Lemma \ref{O-estimates}, we have
\[
T_2 \le C_{\mathcal{O}} \nrm (\itPi_V \bfu,\itPi_M \bfu) \nrm_{1,h} \nrm (e_{\bfu}, e_{\uhat}) \nrm^2_{1,h} \le C_{\mathcal{O}} \|\bfu\|_{1, \Omega} \nrm (e_{\bfu}, e_{\uhat}) \nrm^2_{1,h}. 
\]

For $T_3, T_4$, if we directly apply the continuity property of the operator $\mathcal{O}$ \eqref{O_estimate1}, we will only obtain suboptimal order of convergence. We can recover optimal convergence rate by a refined argument for each term. We start with $T_4$, by the linearity of $\mathcal{O}$ for the last two components, we have
\begin{align*}
T_4 & = - \calO((\bfu, \bfu); (\delta_{\bfu}, \delta_{\uhat}), (e_{\bfu}, e_{\uhat})) \\
& = ( \delta_{\bfu} \otimes \bfu , \nabla e_{\bfu})_{\calT_h} - \langle \tau_C(\bfu) (\delta_{\bfu} - \delta_{\uhat}), e_{\bfu} - e_{\uhat} \rangle_{\partial \calT_h} - \langle ( \delta_{\uhat} \otimes \bfu ) \bfn, e_{\bfu} - e_{\uhat} \rangle_{\partial \calT_h} := T_{41} + T_{42} + T_{43}.
\end{align*}
We now apply the generalized H\"{o}lder's inequality to bound these three terms as follows:
\begin{align*}
T_{41} &\le  \sum_{K \in \calT_h} \|\bfu\|_{\infty, K} \|\delta_{\bfu}\|_{K} \|\nabla e_{\bfu}\|_{K} 
\le C h^{k+2}\|\bfu\|_{\infty, \Omega} \|\bfu\|_{k+2, \Omega} \nrm (e_{\bfu}, e_{\uhat}) \nrm_{1, h}, \\
T_{42} & \le \sum_{K \in \calT_h} \|\bfu \cdot \bfu\|_{\infty, \partial K} 
h^{\frac12} \|\delta_{\bfu} - \delta_{\uhat}\|_{\partial K} h^{-\frac12} \|e_{\bfu} - e_{\uhat}\|_{\partial K} 
\le C \|\bfu \cdot \bfn\|_{\infty, \calE_h} h^{k+1} \|\bfu\|_{h+1, \Omega} \nrm (e_{\bfu}, e_{\uhat}) \nrm_{1, h}, \\
T_{43} & \le C \|\bfu \cdot \bfn\|_{\infty, \calE_h} h^{k+1} \|\bfu\|_{h+1, \Omega} \nrm (e_{\bfu}, e_{\uhat}) \nrm_{1, h}.
\end{align*}

For $T_3$, by the definition of $\mathcal{O}$, we have
\begin{align*}
{T_3} &= (\itPi_V \bfu \otimes \delta_{\bfu}, \nabla e_{\bfu})_{\calT_h} + (\frac12 (\nabla \cdot \delta_{\bfu}) \itPi_V \bfu, e_{\bfu})_{\calT_h} 
- \langle \frac12 (\itPi_V \bfu \otimes (\delta_{\bfu} - \delta_{\uhat})) \bfn , e_{\bfu} - e_{\uhat} \rangle_{\partial \calT_h} \\
& + \langle ( \tau_C(\itPi_M \bfu) - \tau_C(\bfu)) (\itPi_V \bfu - \itPi_M \bfu), e_{\bfu} - e_{\uhat} \rangle_{\partial \calT_h} 
- \langle (\itPi_M \bfu \otimes \delta_{\uhat}) \bfn , e_{\bfu} - e_{\uhat} \rangle_{\partial \calT_h}. 
\end{align*}
Among the five terms in the above expression, the third term needs some special treatment in order to obtain optimal convergence rates. 
For the others, we can bound them in a similar way as for $T_{41}, T_{42}, T_{43}$. For the sake of simplicity, here we show how to bound 
the last term and then focus on the third term. By applying the generalized H\"{o}lder's inequality on the last term, we have
\begin{align*}
\langle (\itPi_M \bfu \otimes \delta_{\uhat}) \bfn , e_{\bfu} - e_{\uhat} \rangle_{\partial \calT_h} 
&\le \sum_{K \in \calT_h} \|\itPi_M \bfu\|_{\infty, \partial K} h^{\frac12}\|\delta_{\uhat}\|_{\partial K} h^{-\frac12}\|e_{\bfu} - e_{\uhat}\|_{\partial K} \\
&\le C h^{k+1} \|\bfu\|_{\infty, \calE_h} \|\bfu\|_{k+1, \Omega} \nrm (e_{\bfu}, e_{\uhat}) \nrm_{1,h}.
\end{align*}
In the last step we applied the inequality:
\[
\|\itPi_M \bfu\|_{\infty, \partial K} \le C \|\bfu\|_{\infty, \partial K}.
\]
This result can be obtained by a simple scaling argument. Finally, let us focus on the third term. We rewrite the term as follows,

\begin{align*}
\langle \frac12 (\itPi_V \bfu \otimes (\delta_{\bfu} - \delta_{\uhat})) \bfn , e_{\bfu} \rangle_{\partial \calT_h}  
& = \langle \frac12 (\itPi_V \bfu \otimes (\delta_{\bfu} - \delta_{\uhat})) \bfn , e_{\bfu} - e_{\uhat} \rangle_{\partial \calT_h}  
+ \langle \frac12 (\itPi_V \bfu \otimes \delta_{\bfu}) \bfn , e_{\uhat} \rangle_{\partial \calT_h} \\ 
& \; - \langle \frac12 (\itPi_V \bfu \otimes  \delta_{\uhat}) \bfn , e_{\uhat} \rangle_{\partial \calT_h}  \\
& = T_{31} + T_{32} + T_{33}.
\end{align*}
For $T_{31}$, we apply the generalized H\"{o}lder's inequality, 
\[
T_{31} \le \sum_{K \in \calT_h} \| \itPi_V \bfu \|_{\infty, \partial K} h^{\frac12} \|\delta_{\bfu} 
- \delta_{\uhat}\|_{\partial K} h^{-\frac12} \|e_{\bfu} - e_{\uhat}\|_{\partial K} 
\le C h^{k+1} \|\bfu\|_{\infty, \Omega} \|\bfu\|_{k+1, \Omega} \nrm (e_{\bfu}, e_{\uhat}) \nrm_{1, h}.
\]
For $T_{32}$, we have
\[
T_{32} \le \sum_{K \in \calT_h} \|\itPi_V \bfu\|_{\infty, \partial K} h^{-\frac12} \|\delta_{\bfu}\|_{\partial K} \|e_{\uhat}\|_{\partial K} 
\le C h^{k+1} \|\bfu\|_{\infty, \Omega} \|\bfu\|_{k+2, \Omega} (h^{\frac12} \|e_{\uhat}\|_{\partial \calT_h}).
\]
For $T_{33}$, inserting a zero term $ \langle \frac12 ( \bfu \otimes  \delta_{\uhat}) \bfn , e_{\uhat} \rangle_{\partial \calT_h}$ into $T_{33}$ we obtain:
\begin{align*}
T_{33} & = \langle \frac12 ( \delta_{\bfu}  \otimes  \delta_{\uhat}) \bfn , e_{\uhat} \rangle_{\partial \calT_h} 
\le \sum_{K \in \calT_h} \|\delta_{\uhat}\|_{\infty, \partial K} h^{-\frac12} \|\delta_{\bfu}\|_{\partial K} ( h^{\frac12}\|e_{\uhat}\|_{\partial K}) 
\le C h^{k+1} \|\bfu\|_{\infty, \Omega} \|\bfu\|_{k+2, \Omega} (h^{\frac12} \|e_{\uhat}\|_{\partial \calT_h}).
\end{align*}

The last step is to show that
\[
h^{\frac12} \|e_{\uhat}\|_{\partial \calT_h} \le C \nrm (e_{\bfu}, e_{\uhat}) \nrm_{1, h}.
\]
To this end, we apply a triangle inequality and (\ref{basic_3}), 
\[
h^{\frac12} \|e_{\uhat}\|_{\partial \calT_h} \le h^{\frac12} \|e_{\bfu}\|_{\partial \calT_h} + h^{\frac12} \|e_{\bfu} - e_{\uhat}\|_{\partial \calT_h} 
\le C (\|e_{\bfu}\|_{\Omega} + h  \nrm (e_{\bfu}, e_{\uhat}) \nrm_{1, h})\le C  \nrm (e_{\bfu}, e_{\uhat}) \nrm_{1, h}.
\]
This completes the estimate for $T_3$. Finally, if we combine the estimates for $T_1 - T_4$ and Lemma \ref{H1_ineq}, 
we obtain the result stated in the Lemma \ref{estimate_EL}.
\end{proof}

{\bf Step 3: Estimates for $e_p$.} Next we present the optimal error estimate for $e_p$. As usual, we bound the pressure error 
via a \emph{inf-sup} argument. It is well-known \cite{BrezziFortin91} that the following \emph{inf-sup} condition holds for a polygonal domain $\Omega$:
\begin{equation}\label{inf_sup}
\sup_{\bfw \in \bfH^1_0(\Omega) \backslash \{ 0 \}} \frac{(\nabla \cdot \bfw , q)_{\Omega}}{\|\bfw\|_{1, \Omega}} \ge \kappa \|q\|_{\Omega}, \quad \text{for all} \; q \in L^2_0(\Omega).
\end{equation}
Here $\kappa > 0$ is independent of $\bfw, p$. We can bound $e_p$ using the above result.

\begin{lemma}\label{estimate_ep}
Under the same assumption as in Lemma \ref{estimate_EL}, we have
\[
\|e_p\|_{\Omega} \le \mathcal{C} h^{k+1}.
\]
Here the constant $\mathcal{C}$ depends on $\|\bfu\|_{k+2, \Omega}, \|\bfu\|_{\infty, \Omega}, \|p\|_{k+1, \Omega}, \nu, k$ and $\kappa$ but independent of $h$. 
\end{lemma}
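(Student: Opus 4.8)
The plan is to estimate $\|e_p\|_{\Omega}$ through the \emph{inf-sup} condition \eqref{inf_sup}; note first that $e_p=\itPi_Q p-p_h$ lies in $L^2_0(\Omega)$, since both $\itPi_Q p$ and $p_h$ do. I would fix $\bfw\in\bfH^1_0(\Omega)$ with $\|\bfw\|_{1,\Omega}=1$ and $(\nabla\cdot\bfw,e_p)_{\Omega}\ge\frac{\kappa}{2}\|e_p\|_{\Omega}$, and test the error equation \eqref{eq:error} with $(\Gm,\bfv,q,\bfmu)=(0,\itPi_V\bfw,0,\itPi_M\bfw)$; this is admissible because $\bfw|_{\partial\Omega}=0$ forces $\itPi_M\bfw\in\bfM_h^{\,0}$. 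Integrating by parts the term $(\itPi_V\bfw,\nabla e_p)_{\calT_h}$ and using that $\nabla e_p$ is elementwise of degree $\le k+1$, so that $(\bfw-\itPi_V\bfw,\nabla e_p)_{\calT_h}=0$, one rewrites $(\nabla\cdot\bfw,e_p)_{\Omega}=(\nabla\cdot\itPi_V\bfw,e_p)_{\calT_h}+\langle(\bfw-\itPi_V\bfw)\cdot\bfn,e_p\rangle_{\partial\calT_h}$. The key cancellation is that the skeleton term $\langle(\itPi_V\bfw-\itPi_M\bfw)\cdot\bfn,e_p\rangle_{\partial\calT_h}$ produced by the $\mathcal{S}$-pairing combines with $\langle(\bfw-\itPi_V\bfw)\cdot\bfn,e_p\rangle_{\partial\calT_h}$ into $\langle(\bfw-\itPi_M\bfw)\cdot\bfn,e_p\rangle_{\partial\calT_h}$, which vanishes: on every interior face $F$ the function $\bfw-\itPi_M\bfw$ is $L^2(F)$-orthogonal to $P_k(F)$ while $\ljump e_p\rjump|_F\in P_k(F)$, and on $\partial\Omega$ we have $\bfw=\itPi_M\bfw=0$. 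With this cancellation and one integration by parts in the $e_{\Lm}$-terms, \eqref{eq:error} reduces to the identity
\begin{align*}
(\nabla\cdot\bfw,e_p)_{\Omega}=\;&\nu(\nabla\itPi_V\bfw,e_{\Lm})_{\calT_h}-\nu\langle\itPi_V\bfw-\itPi_M\bfw,e_{\Lm}\bfn\rangle_{\partial\calT_h}+\langle\frac{\nu}{h}(\itPi_M e_{\bfu}-e_{\uhat}),\itPi_V\bfw-\itPi_M\bfw\rangle_{\partial\calT_h}\\
&+\calO((\bfu,\bfu);(\bfu,\bfu),(\itPi_V\bfw,\itPi_M\bfw))-\calO((\bfu_h,\uhat_h);(\bfu_h,\uhat_h),(\itPi_V\bfw,\itPi_M\bfw))\\
&-\langle\nu\delta_{\Lm}\bfn-\delta_p\bfn-\frac{\nu}{h}\itPi_M\delta_{\bfu},\itPi_V\bfw-\itPi_M\bfw\rangle_{\partial\calT_h}.
\end{align*}

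Next I would bound every group on the right by $\mathcal{C}h^{k+1}\|\bfw\|_{1,\Omega}$. The linear and data-projection groups are routine: Proposition \ref{basic_7} gives $\|\nabla\itPi_V\bfw\|_{\calT_h}+h^{-\frac12}\|\itPi_V\bfw-\itPi_M\bfw\|_{\partial\calT_h}\le C\|\bfw\|_{1,\Omega}$; the inverse trace inequality \eqref{basic_5} gives $h^{\frac12}\|e_{\Lm}\|_{\partial\calT_h}\le C\|e_{\Lm}\|_{\Omega}$; the approximation estimates \eqref{basic_6}, \eqref{basic_6a} give $h^{\frac12}\|\delta_{\Lm}\|_{\partial\calT_h}+h^{\frac12}\|\delta_p\|_{\partial\calT_h}+h^{-\frac12}\|\delta_{\bfu}\|_{\partial\calT_h}\le Ch^{k+1}(\|\bfu\|_{k+2,\Omega}+\|p\|_{k+1,\Omega})$; and Lemma \ref{estimate_EL} supplies $\|e_{\Lm}\|_{\Omega}+h^{-\frac12}\|\itPi_M e_{\bfu}-e_{\uhat}\|_{\partial\calT_h}\le\mathcal{C}h^{k+1}$. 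Combining these with the Cauchy--Schwarz inequality disposes of the three non-convective groups.

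The main obstacle is the nonlinear difference $\calO((\bfu,\bfu);(\bfu,\bfu),(\itPi_V\bfw,\itPi_M\bfw))-\calO((\bfu_h,\uhat_h);(\bfu_h,\uhat_h),(\itPi_V\bfw,\itPi_M\bfw))$: applying the continuity bound \eqref{O_estimate1} directly to the discretization error $(\bfu-\itPi_V\bfu,\bfu-\itPi_M\bfu)$ loses one power of $h$ at the skeleton (because $\bfu-\itPi_M\bfu$ is only $O(h^{k+\frac12})$ on faces) and would give merely $O(h^k)$. To recover the optimal rate I would insert the intermediate state $(\itPi_V\bfu,\itPi_M\bfu)$ and split the difference into (i) $\calO((\bfu_h,\uhat_h);(\bfu_h,\uhat_h),\cdot)-\calO((\itPi_V\bfu,\itPi_M\bfu);(\itPi_V\bfu,\itPi_M\bfu),\cdot)$, which is controlled by \eqref{O_estimate1} and the elementary boundedness $|\calO((\bfw,\what);(\bfu,\uhat),(\bfv,\vhat))|\le C\,\nrm(\bfw,\what)\nrm_{1,h}\,\nrm(\bfu,\uhat)\nrm_{1,h}\,\nrm(\bfv,\vhat)\nrm_{1,h}$ (obtained just as in the proof of Lemma \ref{O-estimates}), using the stability estimate $\nrm(\bfu_h,\uhat_h)\nrm_{1,h}\le C\nu^{-1}\|\bff\|_{\Omega}$ from Theorem \ref{thm:existence} together with $\nrm(e_{\bfu},e_{\uhat})\nrm_{1,h}\le\mathcal{C}h^{k+1}$ from Lemma \ref{estimate_EL} (the latter factor supplying the missing power of $h$), and (ii) the projection part $\calO((\itPi_V\bfu,\itPi_M\bfu);(\itPi_V\bfu,\itPi_M\bfu),\cdot)-\calO((\bfu,\bfu);(\bfu,\bfu),\cdot)$, which I would estimate term by term exactly as the quantities $T_3$ and $T_4$ were handled in the proof of Lemma \ref{estimate_EL} --- integration by parts on the convective volume term, orthogonality of the $L^2$-projections, the bounds $\|\delta_{\bfu}\|_{\Omega}\le Ch^{k+2}\|\bfu\|_{k+2,\Omega}$ and $h^{\frac12}\|\delta_{\bfu}-\delta_{\uhat}\|_{\partial\calT_h}+h^{\frac12}\|\delta_{\uhat}\|_{\partial\calT_h}\le Ch^{k+1}\|\bfu\|_{k+1,\Omega}$, the $L^\infty$-factor $\|\bfu\|_{L^\infty(\Omega)}$ (here using $\bfu\in\bfW^{1,\infty}(\Omega)$ as in the statement), and the trace bound $h^{\frac12}\|\itPi_M\bfw\|_{\partial\calT_h}\le C\|\bfw\|_{1,\Omega}$. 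Collecting all the groups yields $(\nabla\cdot\bfw,e_p)_{\Omega}\le\mathcal{C}h^{k+1}\|\bfw\|_{1,\Omega}$, and \eqref{inf_sup} then gives $\|e_p\|_{\Omega}\le(\mathcal{C}/\kappa)h^{k+1}$, which is the claim.
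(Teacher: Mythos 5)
Your proposal is correct and follows the same skeleton as the paper's proof: the inf-sup condition \eqref{inf_sup}, the identity $(\nabla\cdot\bfw,e_p)_{\Omega}=(e_p,\nabla\cdot\itPi_V\bfw)_{\calT_h}-\langle(\itPi_V\bfw-\itPi_M\bfw)\cdot\bfn,e_p\rangle_{\partial\calT_h}$ (your face-by-face orthogonality argument for $\langle(\bfw-\itPi_M\bfw)\cdot\bfn,e_p\rangle_{\partial\calT_h}=0$ is exactly what the paper uses, since $e_p\bfn|_F\in\bfP_k(F)$ on each flat face), the test function $(0,\itPi_V\bfw,0,\itPi_M\bfw)$ in \eqref{eq:error}, and the routine bounds on the linear and projection-error groups via Proposition \ref{basic_7}, \eqref{basic_5}--\eqref{basic_6a} and Lemma \ref{estimate_EL}. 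Where you genuinely diverge is in the decomposition of the convective difference. The paper uses the asymmetric three-way split through $(\bfu,\bfu);(\itPi_V\bfu,\itPi_M\bfu)$ and $(\bfu_h,\uhat_h);(\itPi_V\bfu,\itPi_M\bfu)$, and must then carry out a fresh refined analysis ($S_1,\dots,S_5$, with $S_3$ needing special care) for the middle piece, whose first-argument increment $E_{\bfu}=\bfu-\bfu_h$ has triple norm only $O(h^k)$. You instead split symmetrically around $(\itPi_V\bfu,\itPi_M\bfu)$ in both slots: the discrete piece sees only the increment $(e_{\bfu},e_{\uhat})$, which is $O(h^{k+1})$ in $\nrm\cdot\nrm_{1,h}$, so the Lipschitz bound \eqref{O_estimate1} plus the trilinear boundedness (provable exactly as in Lemma \ref{O-estimates}, and implicitly used by the paper for its $T_{43}$) suffice with no refinement; the projection piece is literally $T_3+T_4$ from the proof of Lemma \ref{estimate_EL} with the test pair $(e_{\bfu},e_{\uhat})$ replaced by $(\itPi_V\bfw,\itPi_M\bfw)$, and you correctly identify the two facts needed for that substitution to go through, namely $h^{1/2}\|\itPi_M\bfw\|_{\partial\calT_h}\le C\|\bfw\|_{1,\Omega}$ in place of $h^{1/2}\|e_{\uhat}\|_{\partial\calT_h}\le C\nrm(e_{\bfu},e_{\uhat})\nrm_{1,h}$, and the vanishing of the inserted single-valued skeleton terms because $\itPi_M\bfw$ is single-valued and zero on $\partial\Omega$. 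Your route buys reuse of already-established estimates at the cost of having to verify that the $T_3,T_4$ bounds are insensitive to the test pair; the paper's route is self-contained but repeats a delicate boundary-term analysis. One cosmetic slip: the lemma only needs $\|\bfu\|_{L^\infty(\Omega)}$, not $\bfu\in\bfW^{1,\infty}(\Omega)$ (the latter enters only in Lemma \ref{estimate_eu}), so your parenthetical there should be dropped.
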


\begin{proof}
Since $e_p \in L^2_0(\Omega)$, by \eqref{inf_sup} we have
\begin{equation}\label{inf_sup_p}
\|e_p\|_{\Omega} \le \frac{1}{\kappa} \sup_{\bfw \in \bfH^1_0(\omega) \backslash \{ 0 \}} \frac{(\nabla \cdot \bfw , e_p)_{\Omega}}{\|\bfw\|_{1, \Omega}}.
%\le \frac{C}{\kappa} \sup_{\bfw \in \bfH^1_0(\omega) \backslash \{ 0 \}} \frac{(\nabla \cdot \bfw , e_p)_{\Omega}}{\nrm ( \itPi_V \bfw, \itPi_M \bfw \nrm_{1, h}}.
\end{equation}
Now let us work on the numerator. Applying integration by parts and the orthogonality property of the projections $\itPi_V, \itPi_M$, we can rewrite it as follows:
\[
(\nabla \cdot \bfw , e_p)_{\Omega} = (e_p, \nabla \cdot \itPi_V \bfw)_{\calT_h} + \langle (\bfw - \itPi_V \bfw) \cdot \bfn, e_p \rangle_{\partial \calT_h} =  (e_p, \nabla \cdot \itPi_V \bfw)_{\calT_h} - \langle (\itPi_V \bfw - \itPi_M \bfw) \cdot \bfn, e_p \rangle_{\partial \calT_h}.
\]
If we take $(\Gm,\bfv, q,\bfmu) = (0, \itPi_V \bfw, 0, \itPi_M \bfw)$ in the error equation \eqref{eq:error}, we obtain:
\begin{align*}
(e_p, \nabla \cdot \itPi_V \bfw)_{\calT_h} &- \langle (\itPi_V \bfw - \itPi_M \bfw) \cdot \bfn, e_p \rangle_{\partial \calT_h} \\
&= (\nu e_{\Lm}, \nabla \itPi_V \bfw)_{\calT_h} - \langle \nu e_{\Lm}  \bfn - \frac{\nu}{h} (\itPi_M e_{\bfu} - e_{\uhat}), \itPi_V \bfw - \itPi_M \bfw \rangle_{\partial \calT_h} \\
&+ \langle \nu \delta_{\Lm} \bfn - \delta_p \bfn - \frac{\nu}{h} \itPi_M \delta_{\bfu}, \itPi_V \bfw - \itPi_M \bfw \rangle_{\partial \calT_h} \\
& + \Big( \calO((\bfu, \bfu); (\bfu, \bfu), ( \itPi_V {\bfw}, \itPi_M {\bfw})) -  \calO((\bfu_h, \uhat_h); (\bfu_h, \uhat_h),  ( \itPi_V {\bfw}, \itPi_M {\bfw}))  \Big) \\
&:= T_1 + T_2 + T_3 + T_4.
\end{align*}
Next we show that 
\[
T_1 + T_2 + T_3 + T_4 \le C h^{k+1} \|\bfw\|_{1, \Omega}.
\]
For $T_1$ we have
\[
T_1 \le \nu \|e_{\Lm}\|_{\Omega} \|\nabla \itPi_V \bfw\|_{\calT_h} \le C h^{k+1} \|\bfw\|_{1, \Omega},
\]
by Lemma \ref{estimate_EL} and \eqref{basic_7}. For $T_2$, by the Cauchy-Schwarz inequality, we have
\begin{align*}
T_2 &\le \nu (h^{\frac12}\|e_{\Lm}\|_{\partial \calT_h} + h^{-\frac12} \|\itPi_M e_{\bfu} - e_{\uhat}\|_{\partial \calT_h}) (h^{-\frac12} \|\itPi_V \bfw - \itPi_M \bfw\|_{\partial \calT_h}) \\
& \le C h^{k+1} \|\bfw\|_{1, \Omega}.
\end{align*}
$T_3$ can be estimated similarly as $T_2$. Finally, for the last term, we split into three terms as:
\begin{align*}
T_4 &= \calO((\bfu, \bfu); (\bfu, \bfu), ( \itPi_V {\bfw}, \itPi_M {\bfw})) -  \calO((\bfu, \bfu); (\itPi_V \bfu, \itPi_M \bfu),  ( \itPi_V {\bfw}, \itPi_M {\bfw})) \quad && :T_{41}\\
&+ \calO((\bfu, \bfu); (\itPi_V \bfu, \itPi_M \bfu),  ( \itPi_V {\bfw}, \itPi_M {\bfw})) -  \calO((\bfu_h, \uhat_h); (\itPi_V \bfu, \itPi_M \bfu),  ( \itPi_V {\bfw}, \itPi_M {\bfw}))  && :T_{42} \\
& + \calO((\bfu_h, \uhat_h); (\itPi_V \bfu, \itPi_M \bfu),  ( \itPi_V {\bfw}, \itPi_M {\bfw})) - \calO((\bfu_h, \uhat_h); (\bfu_h, \uhat_h),  ( \itPi_V {\bfw}, \itPi_M {\bfw})) && :T_{43}.
\end{align*}
We bound $T_{4i}$ separately. 
\begin{align*}
T_{41} &= \calO((\bfu, \bfu); (\delta_{\bfu}, \delta_{\uhat}), ( \itPi_V {\bfw}, \itPi_M {\bfw})) \\
& = ( \delta_{\bfu} \otimes \bfu, \itPi_V \bfw)_{\calT_h} + \langle \tau_C(\bfu) (\delta_{\bfu} - \delta_{\uhat}) , \itPi_V \bfw - \itPi_M \bfw \rangle_{\partial \calT_h} + \langle (\delta_{\uhat} \otimes \bfu) \bfn, \itPi_V \bfw - \itPi_M \bfw \rangle_{\partial \calT_h} \\
&\le \|\bfu\|_{\infty, \Omega} \|\delta_{\bfu}\|_{\Omega} \|\itPi_V {\bfw}\|_{\Omega} + \|\bfu\|_{\infty, \Omega} \; h^{\frac12} (\|\delta_{\bfu} - \delta_{\uhat}\|_{\partial \calT_h} + \|\delta_{\uhat}\|_{\partial \calT_h}) \; h^{-\frac12} \|\itPi_V \bfw - \itPi_M \bfw\|_{\partial \calT_h} \\
& \le C h^{k+1} \|\bfu\|_{\infty, \Omega} \|\bfu\|_{k+1, \Omega} \nrm (\itPi_V \bfw, \itPi_M \bfw) \nrm_{1,h} \le C h^{k+1} \|\bfu\|_{\infty, \Omega} \|\bfw\|_{1, \Omega},
\end{align*}
the last step is by the approximation properties of the projections \eqref{basic_5}, \eqref{basic_6}.

For $T_{43}$, due to the linearity of $\mathcal{O}$ on the last two components and \eqref{O_estimate1}, we have
\begin{align*}
T_{43} &= \calO((\bfu_h, \uhat_h); (e_{\bfu}, e_{\uhat}),  ( \itPi_V {\bfw}, \itPi_M {\bfw})) \le C \nrm (\bfu_h, \uhat_h)\nrm_{1,h} \; \nrm (e_{\bfu}, e_{\uhat}) \nrm_{1,h} \; \nrm ( \itPi_V {\bfw}, \itPi_M {\bfw}) \nrm_{1,h}  \\
& \le C (\nrm (\itPi_V \bfu, \itPi_M \bfu)\nrm_{1,h} + \nrm (e_{\bfu}, e_{\uhat}) \nrm_{1,h}) \; \nrm (e_{\bfu}, e_{\uhat}) \nrm_{1,h} \; \nrm ( \itPi_V {\bfw}, \itPi_M {\bfw}) \nrm_{1,h} \\
& \le C h^{k+1} \|\bfu\|_{1, \Omega} \nrm ( \itPi_V {\bfw}, \itPi_M {\bfw}) \nrm_{1,h} \le C h^{k+1} \|\bfw\|_{1, \Omega},
\end{align*}
by \eqref{basic_7} and Lemma \ref{estimate_EL}. 

For $T_{42}$, if we directly apply \eqref{basic_7}, we will only obtain suboptimal convergence rates. Alternatively, we need a refined analysis for this term. First, we let $E_{\bfu}:= \bfu - \bfu_h = e_{\bfu} + \delta_{\bfu}$ and $E_{\uhat}:= \bfu - \uhat = e_{\uhat} + \delta_{\uhat}$. Next, by the definition of $\mathcal{O}$, we can write $T_{42}$ as 
\begin{align*}
T_{42} & = -(\itPi_V \bfu \otimes E_{\bfu}, \nabla \itPi_V \bfw)_{\calT_h} - (\frac12 (\nabla \cdot E_{\bfu}) \itPi_V \bfu, \itPi_V \bfw)_{\calT_h} + \langle \frac12 (\itPi_V \bfu \otimes (E_{\bfu} - E_{\uhat})) \bfn , \itPi_V \bfw \rangle_{\partial \calT_h} \\
&+ \langle (\tau_C(\bfu) -\tau_C(\uhat_h)) (\itPi_V \bfu - \itPi_M \bfu), \itPi_V \bfw - \itPi_M \bfw \rangle_{\partial \calT_h} - 
\langle  (\itPi_M \bfu \otimes E_{\uhat}) \bfn, \itPi_V \bfw - \itPi_M \bfw \rangle_{\partial \calT_h} \\
& = S_1 + \dots + S_5.
\end{align*}
Notice that by Lemma \ref{estimate_EL} and \eqref{basic_6} we have
\begin{subequations}
\begin{align}\label{estimate_EU}
\|E_{\bfu}\|_{1, h} &\le \|e_{\bfu}\|_{1, h} + \|\delta_{\bfu}\|_{1, h} \le C h^{k+1} \|\bfu\|_{k+2, \Omega}, \\
\label{estimate_EUhat}
\|E_{\uhat}\|_{\partial \calT_h} &\le \|\delta_{\uhat}\|_{\partial \calT_h} + \|e_{\bfu}\|_{\partial \calT_h} + \|e_{\bfu} - e_{\uhat}\|_{\partial \calT_h} \le C h^{k+\frac12},
\end{align}
\end{subequations}
by \eqref{basic_PM}, Lemma \ref{estimate_EL}.
Now we bound each of $S_i$. By the generalized H\"{o}lder's inequality, we have
\[
S_1 \le \|\itPi_V \bfu\|_{\infty, \Omega} \|E_{\bfu}\|_{\Omega} \|\nabla \itPi_V \bfw\|_{\Omega} \le C h^{k+1} \|\bfu\|_{\infty, \Omega} \|\bfw \|_{1, \Omega}.
\]
By a similar argument, we can bound $S_2$ as:
\[S_2 \le C h^{k+1} \|\bfu\|_{\infty, \Omega} \|\bfw \|_{1, \Omega}.\]
For $S_4$, we apply the generalized H\"{o}lder's inequality to have
\begin{align*}
S_4 &\le h^{\frac12}\|\tau_C(\bfu) -\tau_C(\uhat_h)\|_{\partial \calT_h} \| \itPi_V \bfu - \itPi_M \bfu \|_{\infty, \partial \calT_h} \; h^{-\frac12} \| \itPi_V \bfw - \itPi_M \bfw\|_{\partial \calT_h} \\
&\le C h^{\frac12}\| \bfu - \uhat_h \|_{\partial \calT_h} \|\bfu\|_{\infty, \Omega} \; h^{-\frac12} \| \itPi_V \bfw - \itPi_M \bfw\|_{\partial \calT_h} \\
& \le C h^{k+1} \|\bfu\|_{\infty, \Omega} \|\bfw \|_{1, \Omega},
\end{align*}
by the estimate \eqref{estimate_EUhat} and \eqref{basic_7}.

By a similar argument we can bound $S_5$ as
\[
S_5 \le C h^{k+1} \|\bfu\|_{\infty, \Omega} \|\bfw\|_{1, \Omega}.
\]
For the last term $S_3$, if we apply similar estimate as the others, we will only obtain suboptimal order convergence rates. Therefore, we need a refined estimate for this term. We rewrite $S_3$ as follows:

\begin{align*}
S_3 &= \langle \frac12 (\itPi_V \bfu \otimes (e_{\bfu} - e_{\uhat})) \bfn , \itPi_V \bfw \rangle_{\partial \calT_h} + \langle \frac12 (\itPi_V \bfu \otimes \delta_{\bfu}) \bfn , \itPi_V \bfw \rangle_{\partial \calT_h} - \langle \frac12 (\itPi_V \bfu \otimes \delta_{\uhat}) \bfn , \itPi_V \bfw \rangle_{\partial \calT_h} \\
& \le h^{\frac12} \|\itPi_V \bfw\|_{\partial \calT_h} \|\itPi_V \bfu\|_{\infty, \partial \calT_h} h^{-\frac12}(\|e_{\bfu} - e_{\uhat}\|_{\partial \calT_h} + \|\delta_{\bfu}\|_{\partial \calT_h} ) - \langle \frac12 (\itPi_V \bfu \otimes \delta_{\uhat}) \bfn , \itPi_V \bfw \rangle_{\partial \calT_h} \\
&\le C h^{k+1} - \frac12 \langle (\itPi_V \bfu \otimes \delta_{\uhat}) \bfn , \itPi_V \bfw \rangle_{\partial \calT_h},
\end{align*}
by \eqref{basic_6a}, Lemma \ref{estimate_EL}. For the last term, we further split it into two terms as:
\begin{align*}
\langle (\itPi_V \bfu \otimes \delta_{\uhat}) \bfn , \itPi_V \bfw \rangle_{\partial \calT_h} &= \langle (\itPi_V \bfu \otimes \delta_{\uhat}) \bfn , \bfw \rangle_{\partial \calT_h} - \langle (\itPi_V \bfu \otimes \delta_{\uhat}) \bfn , \bfw - \itPi_V \bfw \rangle_{\partial \calT_h} \\
& = - \langle (( \bfu - \itPi_V \bfu) \otimes \delta_{\uhat}) \bfn , \bfw \rangle_{\partial \calT_h} - \langle (\itPi_V \bfu \otimes \delta_{\uhat}) \bfn , \bfw - \itPi_V \bfw \rangle_{\partial \calT_h}, \\
\intertext{
where the last step is obtained by inserting a zero term $\langle ( \bfu  \otimes \delta_{\uhat}) \bfn , \bfw \rangle_{\partial \calT_h} = \langle ( \bfu  \otimes \delta_{\uhat}) \bfn , \bfw \rangle_{\partial \Omega} = 0$.}
&\le h^{\frac14}\|\delta_{\bfu}\|_{L^4(\partial \calT_h)} \; h^{\frac14} \|\bfw\|_{L^4(\partial \calT_h)} \; h^{-\frac12}\|\delta_{\uhat}\|_{\partial \calT_h} + \|\itPi_V \bfu \|_{\infty, \partial \calT_h} \|\delta_{\uhat}\|_{\partial \calT_h} \|\bfw - \itPi_V \bfw\|_{\partial \calT_h}\\
&\le C h^k \|\delta_{\bfu}\|_{1, h} \|\bfw\|_{1, \Omega} + C h^{k+1} \|\bfu\|_{\infty, \Omega} \|\bfw\|_{1, \Omega} \\
& \le C h^{k+1} (\|\bfu\|_{2, \Omega} + \|\bfu\|_{\infty, \Omega}) \|\bfw\|_{1, \Omega},
\end{align*}
where in the last step we used the inequalities \eqref{basic_3}, \eqref{basic_6}, \eqref{basic_PM}. The proof is complete if we combine all the above estimates.
\end{proof}

{\bf Step 4: Optimal estimate for $e_{\bfu}$.} Notice that Lemma \ref{estimate_EL} provides an optimal estimate for $e_{\Lm}$ but only suboptimal estimate for $e_{\bfu}$. This is due to the fact that we use a $P_{k+1}$ polynomial space for the unknown $\bfu$. To obtain an optimal convergence estimate for $e_{\bfu}$ we will use the adjoint problem \eqref{eq:dualNSE} to apply a duality argument. We begin by the following identity for the error $e_{\bfu}$:

\begin{lemma}\label{eu_identity}
Let $(\bfphi,\psi)$ be the solution of the dual problem \eqref{eq:dualNSE} with the source term $\bftheta = e_{\bfu}$, then we have
\begin{align*}
\|e_{\bfu}\|^2_{\Omega}  = & - \langle e_{\bfu} - e_{\uhat}, \nu \delta_{\Phim} \bfn + \delta_{\psi} \bfn \rangle_{\partial \calT_h} \\
& - \langle \frac{\nu}{h} (\itPi_M e_{\bfu} - e_{\uhat}), \itPi_V \bfphi - \itPi_M \bfphi \rangle_{\partial \calT_h} \\
& + \langle \nu \delta_{\Lm} \bfn - \delta_p \bfn - \frac{\nu}{h} \itPi_M \delta_{\bfu}, \itPi_V \bfphi - \itPi_M \bfphi \rangle_{\partial \calT_h} \\
& - \Big( (e_{\bfu}, \nabla \cdot ( \bfphi \otimes  \bfu))_{\calT_h} + \calO((\bfu, \bfu); (e_{\bfu}, e_{\uhat}), (\itPi_V \bfphi, \itPi_M \bfphi)) \Big) \\
& - \calO((\bfu, \bfu); (\delta_{\bfu}, \delta_{\uhat}), (\itPi_V \bfphi, \itPi_M \bfphi)) \\
& + \Big( \calO((\bfu, \bfu); (\bfu_h, \uhat_h), (\delta_{\bfphi}, \delta_{\widehat{\bfphi}})) - \calO((\bfu_h, \uhat_h); (\bfu_h, \uhat_h), 
(\delta_{\bfphi}, \delta_{\widehat{\bfphi}})) \Big) \\
& + \Big( \calO((\bfu_h, \uhat_h); (\bfu_h, \uhat_h), (\bfphi, \bfphi)) - \calO((\bfu, \uhat); (\bfu_h, \uhat_h), (\bfphi, \bfphi)) - (e_{\bfu}, \bfY)_{\calT_h} \Big) \\
& + (\nu e_{\Lm}, \nabla \delta_{\bfphi})_{\calT_h} - (e_p, \nabla \cdot \delta_{\bfphi})_{\calT_h}  \\
& := T_1 + \cdots + T_8.
\end{align*}
Here $\bfY := \frac12 (\nabla \bfphi)^{\top}\bfu - \frac12 (\nabla \bfu)^{\top} \bfphi$ and
\[
\delta_{\Phim} = \Phim - \itPi_G \Phim, \quad  \delta_{\bfphi} := \bfphi - \itPi_V \bfphi, \quad \delta_{\psi} := \psi - \itPi_Q \psi, 
\quad \delta_{\phihat} := \bfphi - \itPi_M \bfphi.
\]
\end{lemma}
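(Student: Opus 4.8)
The plan is to prove this \emph{exact} identity by an Aubin--Nitsche duality argument, so the work is purely algebraic: integrate by parts the adjoint momentum equation \eqref{eq:dualNSE2}, feed in the discrete error equation \eqref{eq:error} tested against the $L^{2}$-projections of the dual solution, and bookkeep; no inequality is used. I would start from $\|e_{\bfu}\|_{\Omega}^{2}=(e_{\bfu},\bftheta)_{\Omega}$ with $\bftheta=e_{\bfu}$, substitute the right-hand side of \eqref{eq:dualNSE2}, and split the adjoint operator into its Stokes part $-\nu\nabla\cdot\Phim-\nabla\psi$, its convective part $-\nabla\cdot(\bfphi\otimes\bfu)$, and the zeroth-order part $-\bfY$ with $\bfY:=\frac12(\nabla\bfphi)^{\top}\bfu-\frac12(\nabla\bfu)^{\top}\bfphi$; this last piece is just retained as $-(e_{\bfu},\bfY)_{\calT_h}$, which will end up inside $T_7$. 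For the Stokes part, element-wise integration by parts turns $-\nu(\nabla\cdot\Phim,e_{\bfu})_{\calT_h}-(\nabla\psi,e_{\bfu})_{\calT_h}$ into $(\nu\Phim,\nabla e_{\bfu})_{\calT_h}+(\psi,\nabla\cdot e_{\bfu})_{\calT_h}-\langle\nu\Phim\bfn+\psi\bfn,e_{\bfu}\rangle_{\partial\calT_h}$; since $\Phim\bfn$ and $\psi$ are single valued across interfaces and $e_{\uhat}\in\bfM_h^{\,0}$ vanishes on $\partial\Omega$, I may replace $e_{\bfu}$ by $e_{\bfu}-e_{\uhat}$ in the interface term, then split $\Phim=\itPi_G\Phim+\delta_{\Phim}$, $\psi=\itPi_Q\psi+\delta_{\psi}$, $\bfphi=\itPi_V\bfphi+\delta_{\bfphi}$ and $\bfphi|_{\calE_h}=\itPi_M\bfphi+\delta_{\phihat}$. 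The $\delta$-parts of the interface term assemble into $T_1$, and the volume remainders $(\nu e_{\Lm},\nabla\delta_{\bfphi})_{\calT_h}-(e_p,\nabla\cdot\delta_{\bfphi})_{\calT_h}$ become $T_8$ after one more integration by parts together with $\nabla\cdot\bfphi=0$.

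The second ingredient is to test the error equation \eqref{eq:error} with $(\Gm,\bfv,q,\bfmu)=(\nu\itPi_G\Phim,\ \itPi_V\bfphi,\ \itPi_Q\psi,\ \itPi_M\bfphi)$; because $\mathcal{S}$ is already written in an ``integrated-by-parts'' form, the signs here are forced by matching its velocity/stress and velocity/pressure couplings against the terms just produced. This replaces $\mathcal{S}((e_{\Lm},e_{\bfu},e_p,e_{\uhat}),\cdot)$ by the right-hand side of \eqref{eq:error}, namely $\langle\nu\delta_{\Lm}\bfn-\delta_p\bfn-\frac{\nu}{h}\itPi_M\delta_{\bfu},\ \itPi_V\bfphi-\itPi_M\bfphi\rangle_{\partial\calT_h}$ (this is $T_3$), plus $-\frac{\nu}{h}\langle\itPi_M e_{\bfu}-e_{\uhat},\ \itPi_V\bfphi-\itPi_M\bfphi\rangle_{\partial\calT_h}$ (this is $T_2$), at the cost of the nonlinear discrepancy $\calO((\bfu,\bfu);(\bfu,\bfu),(\itPi_V\bfphi,\itPi_M\bfphi))-\calO((\bfu_h,\uhat_h);(\bfu_h,\uhat_h),(\itPi_V\bfphi,\itPi_M\bfphi))$. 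Using the $L^{2}$-orthogonality of $\itPi_G,\itPi_V,\itPi_Q,\itPi_M$ (exactly as in the derivation of \eqref{eq:error}), I would then check that all the linear Stokes-type volume and interface pieces generated by $\mathcal{S}$ cancel against those coming from the integration by parts --- this is the routine part --- leaving only the convective contributions to be reconciled.

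The crux is matching those convective contributions. Integration by parts of $-(\nabla\cdot(\bfphi\otimes\bfu),e_{\bfu})_{\calT_h}$ gives $(\bfphi\otimes\bfu,\nabla e_{\bfu})_{\calT_h}$ minus interface terms; these, together with $-(e_{\bfu},\bfY)_{\calT_h}$ and the nonlinear discrepancy above (after the substitutions $\bfu_h=\itPi_V\bfu-e_{\bfu}$ and $\bfu_h=\bfu-E_{\bfu}$), must be regrouped into $T_4$ --- which isolates the consistency defect $(e_{\bfu},\nabla\cdot(\bfphi\otimes\bfu))_{\calT_h}+\calO((\bfu,\bfu);(e_{\bfu},e_{\uhat}),(\itPi_V\bfphi,\itPi_M\bfphi))$ --- into $T_5=-\calO((\bfu,\bfu);(\delta_{\bfu},\delta_{\uhat}),(\itPi_V\bfphi,\itPi_M\bfphi))$, into the $\calO$-difference $T_6$ with middle argument $(\bfu_h,\uhat_h)$ and test argument $(\delta_{\bfphi},\delta_{\phihat})$, and into $T_7$, the $\calO$-difference with test argument $(\bfphi,\bfphi)$ coming from changing the first (convection) argument from $(\bfu_h,\uhat_h)$ to $(\bfu,\uhat)$, minus $(e_{\bfu},\bfY)_{\calT_h}$. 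The engine for this rearrangement is the integration-by-parts identity already established inside the proof of Proposition \ref{Ocoercive} --- there applied on the diagonal --- now used with three distinct arguments in the roles of convection field, solution and test function. This is exactly where the particular form of the adjoint convective terms $-\nabla\cdot(\bfphi\otimes\bfu)$, $-\frac12(\nabla\bfphi)^{\top}\bfu$ and $+\frac12(\nabla\bfu)^{\top}\bfphi$ in \eqref{eq:dualNSE2} is used: they are chosen so that the dual convection is (up to the defect collected in $T_4$) the discrete adjoint of the linearization of $\calO$ in its middle slot. I expect the sign-tracking through the ``integrated-by-parts'' form of $\mathcal{S}$ and through this non-diagonal use of the Proposition \ref{Ocoercive} identity to be the main obstacle; once the convective block collapses as described, the remaining terms assemble into $T_1,\dots,T_8$, which proves the claimed identity.
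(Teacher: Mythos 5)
Your plan is correct and follows essentially the same route as the paper: start from $\|e_{\bfu}\|_{\Omega}^{2}=(e_{\bfu},\bftheta)_{\Omega}$ with the dual equations, integrate by parts elementwise, test the error equation \eqref{eq:error} with the $L^{2}$-projections of the dual solution (your single substitution $(\nu\itPi_G\Phim,\itPi_V\bfphi,\itPi_Q\psi,\itPi_M\bfphi)$ is, by linearity, exactly the sum of the paper's two substitutions $(\nu\itPi_G\Phim,0,\itPi_Q\psi,0)$ and $(0,\itPi_V\bfphi,0,\itPi_M\bfphi)$), use single-valuedness of $\Phim\bfn$, $\psi$ and of $e_{\uhat}$ to form $T_1$, and regroup the convective discrepancy into $T_4$--$T_7$ by adding and subtracting intermediate $\calO$ terms. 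The identification of the adjoint convective terms as the (defect-corrected) adjoint of the linearization of $\calO$ in its middle slot matches the paper's bookkeeping.
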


\begin{proof}
By the adjoint problem \eqref{eq:dualNSE1} - \eqref{eq:dualNSE3} we have 
\begin{align*}
\|e_{\bfu}\|^2_{\Omega}  = & - \nu (e_{\bfu}, \nabla \cdot \Phim)_{\calT_h} - (e_{\bfu}, \nabla \cdot ( \bfphi \otimes  \bfu)_{\calT_h} 
- (e_{\bfu}, \nabla \psi)_{\calT_h} - (e_{\bfu}, \bfY)_{\calT_h} \\
& - (\nu e_{\Lm}, \Phim)_{\calT_h} + (\nu e_{\Lm}, \nabla \bfphi)_{\calT_h} \\
& - (e_p, \nabla \cdot \bfphi)_{\calT_h}\\
\intertext{rearranging the terms, we have}
= & - \nu (e_{\bfu}, \nabla \cdot \Phim)_{\calT_h} - (\nu e_{\Lm}, \Phim)_{\calT_h}     \\
& - (e_{\bfu}, \nabla \psi)_{\calT_h} \\
& - (e_{\bfu}, \nabla \cdot ( \bfphi \otimes  \bfu)_{\calT_h} + (\nu e_{\Lm}, \nabla \bfphi)_{\calT_h} - (e_p, \nabla \cdot \bfphi)_{\calT_h} - (e_{\bfu}, \bfY)_{\calT_h} \\
= & - \nu (e_{\bfu}, \nabla \cdot \itPi_G \Phim)_{\calT_h} - (\nu e_{\Lm}, \itPi_G \Phim)_{\calT_h} - \nu (e_{\bfu}, \nabla \cdot \delta_{\Phim})_{\calT_h}    \\
& - (e_{\bfu}, \nabla \itPi_Q \psi)_{\calT_h} - (e_{\bfu}, \nabla \delta_{\psi})_{\calT_h}\\
& - (e_{\bfu}, \nabla \cdot ( \bfphi \otimes  \bfu)_{\calT_h} + (\nu e_{\Lm}, \nabla \bfphi)_{\calT_h} - (e_p, \nabla \cdot \bfphi)_{\calT_h} - (e_{\bfu}, \bfY)_{\calT_h}\\
\intertext{taking $(\Gm,\bfv, q,\bfmu) = (\nu \itPi_G \Phim, 0, \itPi_Q \psi , 0)$ in the error equation \eqref{eq:error}, inserting the resulting identity into 
the above expression and simplifying, we have}
= & - \langle e_{\uhat}, \nu \itPi_G \Phim \bfn \rangle_{\partial\calT_h} - \nu (e_{\bfu}, \nabla \cdot \delta_{\Phim})_{\calT_h}    \\
& - \langle e_{\uhat},  \itPi_Q \psi \bfn \rangle_{\partial\calT_h} - (e_{\bfu}, \nabla \delta_{\psi})_{\calT_h}\\
& - (e_{\bfu}, \nabla \cdot ( \bfphi \otimes  \bfu)_{\calT_h} + (\nu e_{\Lm}, \nabla \bfphi)_{\calT_h} - (e_p, \nabla \cdot \bfphi)_{\calT_h} - (e_{\bfu}, \bfY)_{\calT_h} \\
\intertext{inserting two zero terms: $\langle e_{\uhat}, \nu \Phim \bfn \rangle_{\partial \calT_h} = \langle e_{\uhat}, \psi \bfn \rangle_{\partial \calT_h}$ 
and integrating by parts in the first two lines to obtain}
= & - \langle e_{\bfu} - e_{\uhat}, \nu \delta_{\Phim} \bfn + \delta_{\psi} \bfn \rangle_{\partial \calT_h} \\
& - (e_{\bfu}, \nabla \cdot ( \bfphi \otimes  \bfu)_{\calT_h} + (\nu e_{\Lm}, \nabla \bfphi)_{\calT_h} - (e_p, \nabla \cdot \bfphi)_{\calT_h} - (e_{\bfu}, \bfY)_{\calT_h}.
\end{align*}
Next we work on the last line in the above expression. We first insert the projection of $\bfphi$ to have
\begin{align*}
 - (e_{\bfu},  \nabla \cdot ( \bfphi & \otimes  \bfu)_{\calT_h}  + (\nu e_{\Lm}, \nabla \bfphi)_{\calT_h} 
 - (e_p, \nabla \cdot \bfphi)_{\calT_h} - (e_{\bfu}, \bfY)_{\calT_h} \\
& =  (\nu e_{\Lm}, \nabla \itPi_V \bfphi)_{\calT_h} - (e_p, \nabla \cdot \itPi_V \bfphi)_{\calT_h} 
- (e_{\bfu}, \nabla \cdot ( \bfphi \otimes  \bfu)_{\calT_h} - (e_{\bfu}, \bfY)_{\calT_h} \\
& + (\nu e_{\Lm}, \nabla \delta_{\bfphi})_{\calT_h} - (e_p, \nabla \cdot \delta_{\bfphi})_{\calT_h}  \\
\intertext{taking $(\Gm,\bfv, q,\bfmu) = (0, \itPi_V \bfphi, 0 , \itPi_M \bfphi)$ in the error equation \eqref{eq:error}, 
intergrating by parts for the last two terms in the above expression and simplifying, we have,}
& = - \langle \frac{\nu}{h} (\itPi_M e_{\bfu} - e_{\uhat}), \itPi_V \bfphi - \itPi_M \bfphi \rangle_{\partial \calT_h} 
+ \langle \nu \delta_{\Lm} \bfn - \delta_p \bfn - \frac{\nu}{h} \itPi_M \delta_{\bfu}, \itPi_V \bfphi - \itPi_M \bfphi \rangle_{\partial \calT_h} \\
& \quad - \calO((\bfu, \bfu); (\bfu, \bfu), (\itPi_V \bfphi, \itPi_M \bfphi)) + \calO((\bfu_h, \uhat_h); (\bfu_h, \uhat_h), (\itPi_V \bfphi, \itPi_M \bfphi)) \\
& \quad - (e_{\bfu}, \nabla \cdot ( \bfphi \otimes  \bfu)_{\calT_h} - (e_{\bfu}, \bfY)_{\calT_h} \\
& \quad + (\nu e_{\Lm}, \nabla \delta_{\bfphi})_{\calT_h} - (e_p, \nabla \cdot \delta_{\bfphi})_{\calT_h}  \\
& = - \langle \frac{\nu}{h} (\itPi_M e_{\bfu} - e_{\uhat}), \itPi_V \bfphi - \itPi_M \bfphi \rangle_{\partial \calT_h} 
+ \langle \nu \delta_{\Lm} \bfn - \delta_p \bfn - \frac{\nu}{h} \itPi_M \delta_{\bfu}, \itPi_V \bfphi - \itPi_M \bfphi \rangle_{\partial \calT_h} \\
& \quad - \Big( (e_{\bfu}, \nabla \cdot ( \bfphi \otimes  \bfu)_{\calT_h} + \calO((\bfu, \bfu); (e_{\bfu}, e_{\uhat}), (\itPi_V \bfphi, \itPi_M \bfphi)) \Big) \\
& \quad - \calO((\bfu, \bfu); (\delta_{\bfu}, \delta_{\uhat}), (\itPi_V \bfphi, \itPi_M \bfphi)) \\
& \quad + \calO((\bfu_h, \uhat_h); (\bfu_h, \uhat_h), (\itPi_V \bfphi, \itPi_M \bfphi)) - \calO((\bfu, \uhat); (\bfu_h, \uhat_h), 
(\itPi_V \bfphi, \itPi_M \bfphi)) - (e_{\bfu}, \bfY)_{\calT_h}\\
& \quad + (\nu e_{\Lm}, \nabla \delta_{\bfphi})_{\calT_h} - (e_p, \nabla \cdot \delta_{\bfphi})_{\calT_h}. 
\end{align*}
We can obtain the expression in the Lemma by inserting $(\bfphi, \bfphi)$ in the two $\mathcal{O}$ terms in the above identity. This completes the proof. 
\end{proof}

Now we are ready to prove our last result:

\begin{lemma}\label{estimate_eu}
Under the same assumption as in Lemma \ref{estimate_EL}, in addition we assume the full $H^2-$regularity of the adjoint problem \eqref{eq:dualreg} holds 
and $k\geq 1$, then we have
\[
\|e_{\bfu}\|_{\Omega} \le \mathcal{C} h^{k+2},
\]
Here the constant $\mathcal{C}$ depends on $\|\bfu\|_{k+2, \Omega}, \|\bfu\|_{W^{1, \infty}(\Omega)}, \|p\|_{k+1, \Omega}, \nu$ and $k$ but independent of $h$. 
\end{lemma}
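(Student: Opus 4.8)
The proof runs entirely off the identity of Lemma~\ref{eu_identity}, applied with dual source $\bftheta = e_{\bfu}$: one has $\|e_{\bfu}\|_\Omega^2 = T_1 + \cdots + T_8$, and by the regularity hypothesis \eqref{eq:dualreg}, $\|\Phim\|_{1,\Omega} + \|\bfphi\|_{2,\Omega} + \|\psi\|_{1,\Omega} \le C_r\|e_{\bfu}\|_\Omega$. So it suffices to bound each $T_i$ by $\mathcal{C}\,h^{k+2}\|e_{\bfu}\|_\Omega$ and then divide by $\|e_{\bfu}\|_\Omega$. The mechanism is uniform: each $T_i$ pairs a factor made of primal errors — controlled by $O(h^{k+1})$ through Lemmas~\ref{estimate_EL} and \ref{estimate_ep}, the full-error bounds \eqref{estimate_EU}--\eqref{estimate_EUhat}, and $\nrm(\bfu_h,\uhat_h)\nrm_{1,h}\le C\nu^{-1}\|\bff\|_\Omega$ from Theorem~\ref{thm:existence} — against a factor made of the dual projection errors $\delta_{\Phim},\delta_\psi,\delta_{\bfphi},\delta_{\phihat}$, which supplies the extra power of $h$ via $\|\nabla\delta_{\bfphi}\|_\Omega\le Ch\|\bfphi\|_{2,\Omega}$, $\|\delta_{\bfphi}\|_\Omega\le Ch^2\|\bfphi\|_{2,\Omega}$, $\|\delta_{\Phim}\|_{\partial\calT_h}+\|\delta_\psi\|_{\partial\calT_h}\le Ch^{1/2}(\|\Phim\|_{1,\Omega}+\|\psi\|_{1,\Omega})$, Proposition~\ref{basic_7}, and — this is where $k\ge1$ enters — $\|\delta_{\bfphi}\|_{\partial\calT_h}+\|\delta_{\phihat}\|_{\partial\calT_h}\le Ch^{3/2}\|\bfphi\|_{2,\Omega}$ (for $k=0$ the trace projection onto $\bfP_0$ is only $O(h^{1/2})$ and the rate drops). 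With these ingredients the consistency-type terms $T_1,T_2,T_3$ follow from Cauchy--Schwarz on $\partial\calT_h$ together with \eqref{basic_5},\eqref{basic_6},\eqref{basic_6a},\eqref{basic_PM}, and $T_8=(\nu e_{\Lm},\nabla\delta_{\bfphi})_{\calT_h}-(e_p,\nabla\cdot\delta_{\bfphi})_{\calT_h}$ from $(\nu\|e_{\Lm}\|_\Omega+\|e_p\|_\Omega)\|\nabla\delta_{\bfphi}\|_\Omega$.

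For the nonlinear terms I first use $\nabla\cdot\bfu=0$ (which annihilates every $\tfrac12(\nabla\cdot\bfu)$ contribution) and the facts that $(\bfphi,\bfphi)$ is single-valued on $\calE_h$ and $\bfphi|_{\partial\Omega}=0$. Running the computation in the proof of Proposition~\ref{Ocoercive} in reverse gives $T_4=\calO((\bfu,\bfu);(e_{\bfu},e_{\uhat}),(\delta_{\bfphi},\delta_{\phihat}))$, whose three surviving pieces $-(e_{\bfu}\otimes\bfu,\nabla\delta_{\bfphi})_{\calT_h}$, $\langle\tau_C(\bfu)(e_{\bfu}-e_{\uhat}),\delta_{\bfphi}-\delta_{\phihat}\rangle_{\partial\calT_h}$, $\langle(e_{\uhat}\otimes\bfu)\bfn,\delta_{\bfphi}-\delta_{\phihat}\rangle_{\partial\calT_h}$ are bounded by the generalized H\"older inequality and $\|\bfu\|_{L^\infty(\Omega)}$ to yield $\mathcal{C}h^{k+2}\|\bfphi\|_{2,\Omega}$. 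The term $T_5=-\calO((\bfu,\bfu);(\delta_{\bfu},\delta_{\uhat}),(\itPi_V\bfphi,\itPi_M\bfphi))$ is already in projection-error form, and here one also uses the superapproximation $\|\delta_{\bfu}\|_\Omega\le Ch^{k+2}\|\bfu\|_{k+2,\Omega}$ afforded by $\bfV_h=\bfP_{k+1}$ and $h^{-1/2}\|\delta_{\bfu}-\delta_{\uhat}\|_{\partial\calT_h}\le Ch^k\|\bfu\|_{k+1,\Omega}$ from \eqref{basic_PM}. The term $T_6=\calO((\bfu,\bfu);(\bfu_h,\uhat_h),(\delta_{\bfphi},\delta_{\phihat}))-\calO((\bfu_h,\uhat_h);(\bfu_h,\uhat_h),(\delta_{\bfphi},\delta_{\phihat}))$ is a difference in the \emph{first} argument; \eqref{O_estimate1} is not directly applicable because its test slot must lie in $\bfV_h\times\bfM_h^{\,0}$ while $(\delta_{\bfphi},\delta_{\phihat})$ does not, so I expand $T_6$ by the algebraic identity of the proof of Lemma~\ref{O-estimates} with convection-field difference $E_{\bfu},E_{\uhat}$, transported field $(\bfu_h,\uhat_h)$, and test field $(\delta_{\bfphi},\delta_{\phihat})$; the five resulting terms are each a product of a full primal error ($\|E_{\bfu}\|_{1,h}\le Ch^{k+1}\|\bfu\|_{k+2,\Omega}$, $\|E_{\uhat}\|_{\partial\calT_h}\le Ch^{k+1/2}$), a bounded $\bfu_h$-factor, and a dual projection error, and H\"older with Lemma~\ref{Lq_ineq} gives $T_6\le\mathcal{C}h^{k+2}\|\bfphi\|_{2,\Omega}$.

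The main obstacle is $T_7=\calO((\bfu_h,\uhat_h);(\bfu_h,\uhat_h),(\bfphi,\bfphi))-\calO((\bfu,\uhat);(\bfu_h,\uhat_h),(\bfphi,\bfphi))-(e_{\bfu},\bfY)_{\calT_h}$ with $\bfY=\tfrac12(\nabla\bfphi)^\top\bfu-\tfrac12(\nabla\bfu)^\top\bfphi$, because $(e_{\bfu},\bfY)_{\calT_h}$ carries no small factor ($\|\bfY\|_\Omega\le C\|\bfu\|_{\bfW^{1,\infty}(\Omega)}\|\bfphi\|_{1,\Omega}\le CC_r\|\bfu\|_{\bfW^{1,\infty}(\Omega)}\|e_{\bfu}\|_\Omega$) and so must be cancelled exactly, not merely bounded. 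Testing $\calO$ against the single-valued, boundary-vanishing field $(\bfphi,\bfphi)$ kills the two face terms of the form $\langle\cdot,\bfv-\vhat\rangle$, so the first-argument difference is linear in $(E_{\bfu},E_{\uhat})$ and equals $(\bfu_h\otimes E_{\bfu},\nabla\bfphi)_{\calT_h}+\tfrac12((\nabla\cdot E_{\bfu})\bfu_h,\bfphi)_{\calT_h}-\tfrac12\langle\bfu_h\otimes(E_{\bfu}-E_{\uhat})\bfn,\bfphi\rangle_{\partial\calT_h}$. Writing $\bfu_h=\bfu-E_{\bfu}$ peels off terms quadratic in $E$ which are $O(h^{2k+2}\|\bfphi\|_{1,\Omega})$, harmless for $k\ge0$. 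In the linear-in-$E$ part one uses the pointwise identity $(\nabla\bfphi)^\top\bfu=\nabla(\bfphi\cdot\bfu)-(\nabla\bfu)^\top\bfphi$ and the rewriting $(\bfu\otimes E_{\bfu},\nabla\bfphi)_{\calT_h}=(E_{\bfu},\nabla(\bfu\cdot\bfphi))_{\calT_h}-(E_{\bfu},(\nabla\bfu)^\top\bfphi)_{\calT_h}$ to obtain $(\bfu\otimes E_{\bfu},\nabla\bfphi)_{\calT_h}-(E_{\bfu},\bfY)_{\calT_h}=\tfrac12(E_{\bfu},\nabla(\bfu\cdot\bfphi))_{\calT_h}$; integrating by parts, the volume part $-\tfrac12(\nabla\cdot E_{\bfu},\bfu\cdot\bfphi)_{\calT_h}$ cancels $\tfrac12((\nabla\cdot E_{\bfu})\bfu,\bfphi)_{\calT_h}$, and the face leftover $\tfrac12\langle E_{\bfu}\cdot\bfn,\bfu\cdot\bfphi\rangle_{\partial\calT_h}-\tfrac12\langle(E_{\bfu}-E_{\uhat})\cdot\bfn,\bfu\cdot\bfphi\rangle_{\partial\calT_h}=\tfrac12\langle E_{\uhat}\cdot\bfn,\bfu\cdot\bfphi\rangle_{\partial\calT_h}$ vanishes because $E_{\uhat}$ is single-valued on interior faces and $\bfu\cdot\bfphi=0$ on $\partial\Omega$. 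Using $e_{\bfu}=E_{\bfu}-\delta_{\bfu}$, what survives is $T_7=(\delta_{\bfu},\bfY)_{\calT_h}+O(h^{2k+2}\|\bfphi\|_{1,\Omega})\le\mathcal{C}h^{k+2}\|\bfu\|_{k+2,\Omega}\|\bfu\|_{\bfW^{1,\infty}(\Omega)}\|e_{\bfu}\|_\Omega$, again via $\|\delta_{\bfu}\|_\Omega\le Ch^{k+2}\|\bfu\|_{k+2,\Omega}$ and the bound on $\|\bfY\|_\Omega$; this cancellation is precisely what the extra terms $-\tfrac12(\nabla\bfphi)^\top\bfu+\tfrac12(\nabla\bfu)^\top\bfphi$ in the dual equation \eqref{eq:dualNSE2} were built for.

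Collecting the eight bounds gives $\|e_{\bfu}\|_\Omega^2\le\mathcal{C}h^{k+2}\|e_{\bfu}\|_\Omega$, hence $\|e_{\bfu}\|_\Omega\le\mathcal{C}h^{k+2}$. The hypothesis $k\ge1$ is used only for the $O(h^{3/2})$ trace approximation of the $H^2$ dual velocity appearing in $T_2,T_3,T_4,T_5,T_6$, and $\bfu\in\bfW^{1,\infty}(\Omega)$ only to control $\|\bfY\|_\Omega$ in $T_7$; the hardest part is the exact cancellation in $T_7$ together with the bookkeeping that every residual volume and face term there is $O(h^{k+2})$ or better.
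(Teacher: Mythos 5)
Your proof is correct and follows essentially the same route as the paper: the same duality identity from Lemma \ref{eu_identity}, the same term-by-term bounds on $T_1,\dots,T_8$, and the same key cancellation of $(e_{\bfu},\bfY)_{\calT_h}$ inside $T_7$ (your reorganization via $\nabla(\bfu\cdot\bfphi)=(\nabla\bfu)^{\top}\bfphi+(\nabla\bfphi)^{\top}\bfu$ and the single-valuedness of $E_{\uhat}$ is a clean equivalent of the paper's sequence of zero-term insertions, likewise landing on $(\delta_{\bfu},\bfY)_{\calT_h}$). One tiny bookkeeping slip: the quadratic-in-$E$ face term $\tfrac12\langle \bfu_h\text{-free part}\rangle$, i.e. $\tfrac12\langle E_{\bfu}\otimes(E_{\bfu}-E_{\uhat})\bfn,\bfphi\rangle_{\partial\calT_h}$, is only $O(h^{2k+1})$ (as is the paper's $T_{71}$), not $O(h^{2k+2})$, so the hypothesis $k\ge1$ is also invoked there and not solely in the $O(h^{3/2})$ trace approximation of the dual velocity.
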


\begin{proof}
By identity in Lemma \ref{eu_identity}, it suffice to estimate $T_{1} - T_{8}$. 

For $T_1$, we apply Cauchy-Schwarz inequality, Lemma \ref{estimate_EL}, \eqref{basic_6a} and the regularity inequality \eqref{eq:dualreg} to have
\[
T_1 \le h^{-\frac12}\|e_{\bfu} - e_{\uhat}\|_{\partial \calT_h} h^{\frac12} \|\nu \delta_{\Phim} \bfn + \delta_{\phi} \bfn\|_{\partial \calT_h} \le C h^{k+1} \cdot h^{\frac12} h^{\frac12} (\|\Phim\|_{1, \Omega} + \|\phi\|_{1, \Omega}) \le \mathcal{C} h^{k+2} \|e_{\bfu}\|_{\Omega}.
\]

Similarly, for $T_2$ we have
\[
T_2 \le \nu h^{-\frac12} \|\itPi_M e_{\bfu} - e_{\uhat}\|_{\partial \calT_h} h^{-\frac12} \|\itPi_V \bfphi - \itPi_M \bfphi\|_{\partial \calT_h} \le C h^{k+1} \cdot h^{-\frac12} h^{\frac32} \|\bfphi\|_{2, \Omega} \le \mathcal{C} h^{k+2} \|e_{\bfu}\|_{\Omega}.
\]
Using Cauchy-Schwarz inequality, \eqref{basic_6a}, \eqref{basic_PM} and \eqref{eq:dualreg}, we can bound $T_3$ as
\[
T_3 \le C h^{k+\frac12} (\|\Lm\|_{k+1, \Omega} + \|p\|_{k+1, \Omega} + \|\bfu\|_{k+2, \Omega}) \, h^{\frac32}\|\bfphi\|_{2, \Omega} \le \mathcal{C} h^{k+2} \|e_{\bfu}\|_{\Omega}.
\]
For $T_8$, we simply apply the Cauchy-Schwarz inequality, \eqref{basic_6}, Lemma \ref{estimate_EL}, Lemma \ref{estimate_ep} and the regularity inequality  \eqref{eq:dualreg} to have
\[
T_8 \le (\nu \|e_{\Lm}\|_{\Omega} + \|e_p\|_{\Omega}) \|\nabla \delta_{\phi}\|_{\calT_h} \le C h  (\nu \|e_{\Lm}\|_{\Omega} + \|e_p\|_{\Omega}) \| \bfphi\|_{2, \Omega} \le \mathcal{C} h^{k+2} \|e_{\bfu}\|_{\Omega}. 
\]
For $T_5$, we explicitly write this term:
\begin{align*}
T_5 & = - ( \delta_{\bfu} \otimes \bfu, \nabla \itPi_V \bfphi)_{\calT_h} + \langle \tau_C(\bfu) (\itPi_M \bfu - \itPi_V \bfu), \itPi_V \bfphi - \itPi_M \bfphi \rangle_{\partial \calT_h} + \langle  (\bfu \otimes \delta_{\uhat}) \bfn, \itPi_V \bfphi - \itPi_M \bfphi \rangle_{\partial \calT_h} \\
& \le \|\bfu\|_{\infty, \Omega} \Big( \|\delta_{\bfu}\|_{\calT_h} \|\nabla \itPi_V \bfphi\|_{\calT_h} + \|\itPi_M \bfu - \itPi_V \bfu\|_{\partial \calT_h} \|\itPi_M \bfphi - \itPi_V \bfphi\|_{\partial \calT_h} + \|\delta_{\uhat}\|_{\partial \calT_h}  \|\itPi_M \bfphi - \itPi_V \bfphi\|_{\partial \calT_h} \Big) \\
& \le C \|\bfu\|_{\infty, \Omega} ( h^{k+2} \|\bfu\|_{k+2, \Omega} \|\bfphi\|_{1, \Omega} + h^{k + \frac12} \|\bfu\|_{k+1, \Omega} h^{\frac32} \|\bfphi\|_{2, \Omega} ) \quad \text{by \eqref{basic_6}, \eqref{basic_6a} and \eqref{basic_PM} } \\
& \le \mathcal{C} h^{k+2} \|e_{\bfu}\|_{\Omega},
\end{align*}
by the regularity assumption \eqref{eq:dualreg}.

For $T_4$, we first expand the term as:
\begin{align*}
T_4 = & - (e_{\bfu}, (\nabla \cdot \bfu) \bfphi)_{\calT_h} - (e_{\bfu} \otimes \bfu, \nabla \bfphi)_{\calT_h} + (e_{\bfu} \otimes \bfu, \nabla \itPi_V \bfphi)_{\calT_h} \\
& - \langle \tau_C(\bfu) (e_{\bfu} - e_{\uhat}), \itPi_V \bfphi - \itPi_M \bfphi \rangle_{\partial \calT_h} - \langle (e_{\uhat} \otimes \bfu) \bfn, \itPi_V \bfphi - \itPi_M \bfphi \rangle_{\partial \calT_h} \\
=& - (e_{\bfu} \otimes \bfu, \nabla \delta_{\bfphi})_{\calT_h}  - \langle \tau_C(\bfu) (e_{\bfu} - e_{\uhat}), \itPi_V \bfphi - \itPi_M \bfphi \rangle_{\partial \calT_h} - \langle (e_{\uhat} \otimes \bfu) \bfn, \itPi_V \bfphi - \itPi_M \bfphi \rangle_{\partial \calT_h}\\
\le & C \|\bfu\|_{\infty, \Omega} ( \|e_{\bfu}\|_{\Omega} \|\nabla \delta_{\bfphi}\|_{\calT_h} + h^{-\frac12}\|e_{\bfu} - e_{\uhat}\|_{\partial \calT_h} h^{\frac12} \|\itPi_V \bfphi - \itPi_M \bfphi\|_{\partial \calT_h} + \| e_{\uhat}\|_{\partial \calT_h} \|\itPi_V \bfphi - \itPi_M \bfphi\|_{\partial \calT_h}) \\
\le & \mathcal{C} h^{k+2} \|e_{\bfu}\|_{\Omega} + C h^{\frac32} \|e_{\bfu}\|_{\Omega} \|e_{\uhat}\|_{\partial \calT_h},
\end{align*}
by Lemma \ref{estimate_EL}, \eqref{basic_6}, \eqref{basic_6a} and \eqref{basic_PM}. By a triangle inequality we have
\[
\|e_{\uhat}\|_{\partial \calT_h} \le \|e_{\bfu} - e_{\uhat}\|_{\partial \calT_h} + \|e_{\bfu}\|_{\partial \calT_h} \le \mathcal{C} (h^{k+\frac32} + h^{k + \frac12}).
\]
Inserting this inequality into the estimate for $T_4$ we obtain:
\[
T_4 \le \mathcal{C} h^{k+2} \|e_{\bfu}\|_{\Omega}.
\]

To bound $T_6$, we first derive some useful inequalities, we first bound $\|\bfu_h\|_{\infty, \Omega}$:
\begin{equation}\label{uhinfty}
\|\bfu_h\|_{\infty, \Omega} \le \|e_{\bfu}\|_{\infty, \Omega} + \|\itPi_V \bfu\|_{\infty, \Omega} \le C (h^{-\frac{d}{2}} \|e_{\bfu}\|_{\Omega} + \|\bfu\|_{\infty, \Omega}).
\end{equation}
Next by a triangle inequality, we have
\begin{equation}\label{u-uhat}
\|\bfu_h - \uhat_h\|_{\partial \calT_h} \le \|e_{\bfu} - e_{\uhat}\|_{\partial \calT_h} + \|\itPi_V \bfu - \itPi_M \bfu\|_{\partial \calT_h} \le \mathcal{C} (h^{k+\frac32} + h^{k+\frac12}) \le \mathcal{C} h^{k+\frac12}. 
\end{equation}
Consequently, we have
\begin{equation}\label{uhat_infty}
\|\uhat_h\|_{\infty,\partial \calT_h} \le \|\bfu_h - \uhat_h\|_{\infty, \partial \calT_h} + \|\bfu_h\|_{\infty, \partial \calT_h} \le \mathcal{C} h^{k+ 1 - \frac{d}{2}} + C (h^{-\frac{d}{2}} \|e_{\bfu}\|_{\Omega} + \|\bfu\|_{\infty, \Omega}).
\end{equation}
The last step we applied a scaling argument for the polynomials on $\partial \calT_h$. Finally, applying triangle inequality we obtain the following estimates:
\begin{subequations}\label{u_uh}
\begin{align}
\|\bfu - \bfu_h\|_{\Omega} &\le \|e_{\bfu}\|_{\Omega} + \|\delta_{\bfu}\|_{\Omega} \le \mathcal{C} h^{k+1}, \\
\|\nabla(\bfu - \bfu_h)\|_{\calT_h} & \le \|\nabla e_{\bfu}\|_{\calT_h} + \|\nabla  \delta_{\bfu}\|_{\calT_h} \le \mathcal{C} h^{k+1}, \\
\|\bfu - \uhat_h\|_{\partial \calT_h} & \le \|\delta_{\uhat}\|_{\partial \calT_h} + \|e_{\uhat}\|_{\partial \calT_h} \le \mathcal{C} h^{k+\frac12}.
\end{align}
\end{subequations}
Now we are ready to present the estimate for $T_6$. If we expand $T_6$ using the definition of $\mathcal{O}$, we obtain:
\begin{align*}
T_6 = & - (\bfu_h \otimes (\bfu - \bfu_h), \nabla \delta_{\bfphi}) - (\frac12 (\nabla \cdot (\bfu -\bfu_h)) \bfu_h, \delta_{\bfphi}) + \langle \frac12 (\bfu_h \otimes (\uhat_h - \bfu_h)) \bfn, \delta_{\bfphi} \rangle_{\partial \calT_h} \\
& + \langle (\tau_C(\bfu) -\tau_C(\uhat_h)) (\uhat_h - \bfu_h), \delta_{\bfphi} - \delta_{\widehat{\bfphi}} \rangle_{\partial \calT_h}+ \langle (\uhat_h \otimes (\bfu - \uhat_h)) \bfn, \delta_{\bfphi} - \delta_{\widehat{\bfphi}} \rangle_{\partial \calT_h} \\
\intertext{applying the generalized H\"{o}lder's inequality for each term, we have}
\le & \|\bfu_h\|_{\infty, \Omega} (\|\bfu - \bfu_h\|_{\Omega} \|\nabla \delta_{\bfphi}\|_{\calT_h} + \|\nabla \cdot (\bfu - \bfu_h)\|_{\calT_h} \|\delta_{\bfphi}\|_{\Omega}) + \|\bfu_h\|_{\infty, \Omega} \|\bfu_h - \uhat_h\|_{\partial \calT_h} \|\delta_{\bfphi}\|_{\partial \calT_h} \\
&+ (\|\bfu\|_{\infty, \Omega} + \|\uhat_h\|_{\infty, \partial \calT_h}) \|\bfu_h - \uhat_h\|_{\partial \calT_h} \|\delta_{\bfphi} - \delta_{\widehat{\bfphi}}\|_{\partial \calT_h} + \|\uhat_h\|_{\infty, \partial \calT_h} \|\bfu - \uhat_h\|_{\partial \calT_h} \|\delta_{\bfphi} - \delta_{\widehat{\bfphi}}\|_{\partial \calT_h}\\
\intertext{now if we apply the inequalities \eqref{uhinfty} - \eqref{u_uh}, \eqref{basic_6} - \eqref{basic_PM} and \eqref{eq:dualreg}, we have}
\le & \mathcal{C} h^{k+2} (h^{k+1 - \frac{d}{2}} + 1) \|e_{\bfu}\|_{\Omega} \le \mathcal{C} h^{k+2} \|e_{\bfu}\|_{\Omega}.
\end{align*}

Finally, we need to estimate $T_7$ which is more involved than the previous terms. To this end, we begin by expanding the nonlinear operator $\mathcal{O}$:
\begin{align*}
T_7 &= (\bfu_h \otimes (\bfu - \bfu_h), \nabla \bfphi)_{\calT_h} + (\frac12 \nabla \cdot (\bfu - \bfu_h) \bfu_h, \bfphi)_{\calT_h} + \langle \frac12 (\bfu_h \otimes (\bfu_h - \uhat_h)) \bfn, \bfphi \rangle_{\partial \calT_h} - (e_{\bfu}, \bfY)_{\calT_h},\\
\intertext{integrating by parts the second term, we have}
&= (\bfu_h \otimes (\bfu - \bfu_h), \nabla \bfphi)_{\calT_h} + \langle \frac12 (\bfu_h \otimes (\bfu - \bfu_h)) \bfn, \bfphi \rangle_{\partial \calT_h} - (\frac12 \bfu_h \otimes (\bfu - \bfu_h), \nabla \bfphi)_{\calT_h} \\
& \quad - (\frac12 \bfphi \otimes (\bfu - \bfu_h), \nabla \bfu_h)_{\calT_h} + \langle \frac12 (\bfu_h \otimes (\bfu_h - \uhat_h)) \bfn, \bfphi \rangle_{\partial \calT_h} - (e_{\bfu}, \bfY)_{\calT_h} \\
& = (\frac12 \bfu_h \otimes (\bfu - \bfu_h), \nabla \bfphi)_{\calT_h} + \langle \frac12 (\bfu_h \otimes (\bfu - \uhat_h)) \bfn, \bfphi \rangle_{\partial \calT_h} - (\frac12 \bfphi \otimes (\bfu - \bfu_h), \nabla \bfu_h)_{\calT_h} - (e_{\bfu}, \bfY)_{\calT_h}, \\
\intertext{inserting the zero term $ - \langle \frac12 (\bfu \otimes (\bfu - \uhat_h)) \bfn, \bfphi \rangle_{\partial \calT_h} = 0$ into above expression,} 
& =- \langle \frac12 ((\bfu - \bfu_h) \otimes (\bfu - \uhat_h)) \bfn, \bfphi \rangle_{\partial \calT_h} + (\frac12 \bfu_h \otimes (\bfu - \bfu_h), \nabla \bfphi)_{\calT_h}  - (\frac12 \bfphi \otimes (\bfu - \bfu_h), \nabla \bfu_h)_{\calT_h} - (e_{\bfu}, \bfY)_{\calT_h} \\
& =- \langle \frac12 ((\bfu - \bfu_h) \otimes (\bfu - \uhat_h)) \bfn, \bfphi \rangle_{\partial \calT_h} - (\frac12( \bfu - \bfu_h) \otimes (\bfu - \bfu_h), \nabla \bfphi)_{\calT_h}  + (\frac12 \bfphi \otimes (\bfu - \bfu_h), \nabla (\bfu - \bfu_h))_{\calT_h} \\
&\quad + (\frac12 \bfu \otimes (\bfu - \bfu_h), \nabla \bfphi)_{\calT_h}  - (\frac12 \bfphi \otimes (\bfu - \bfu_h), \nabla \bfu)_{\calT_h} - (e_{\bfu}, \bfY)_{\calT_h}, \\
\intertext{by the definition of $\bfY = \frac12 (\nabla \bfphi)^{\top}\bfu - \frac12 (\nabla \bfu)^{\top} \bfphi$, we obtain:}
& =- \langle \frac12 ((\bfu - \bfu_h) \otimes (\bfu - \uhat_h)) \bfn, \bfphi \rangle_{\partial \calT_h} - (\frac12( \bfu - \bfu_h) \otimes (\bfu - \bfu_h), \nabla \bfphi)_{\calT_h}  + (\frac12 \bfphi \otimes (\bfu - \bfu_h), \nabla (\bfu - \bfu_h))_{\calT_h} \\
&\quad + (\frac12 \bfu \otimes \delta_{\bfu}, \nabla \bfphi)_{\calT_h}  - (\frac12 \bfphi \otimes \delta_{\bfu}, \nabla \bfu)_{\calT_h} \\
&= T_{71} + \cdots + T_{75}.
\end{align*}
We are going to estimate each of the above terms. For $T_{71}$ we apply the generalized H\"{o}lder's inequality, \eqref{basic_3}, \eqref{u_uh}, and \eqref{eq:dualreg}, 
\[
T_{71} \le \|\bfu - \bfu_h\|_{L^4(\partial \calT_h)} \|\bfu - \uhat_h\|_{\partial \calT_h} \|\bfphi\|_{L^4(\partial \calT_h)} \le C h^{-\frac12} \|\bfu - \bfu_h\|_{1, h} \|\bfphi\|_{1, h}  \|\bfu - \uhat_h\|_{\partial \calT_h} \le \mathcal{C} h^{2k+1} \|e_{\bfu}\|_{\Omega}.
\]
For $T_{72}$, we apply the generalized H\"{o}lder's inequality, \eqref{basic_1}, \eqref{u_uh} and \eqref{eq:dualreg} to get:
\[
T_{72} \le \|\bfu - \bfu_h\|^2_{L^4(\Omega)} \|\nabla \bfphi\|_{\Omega} \le C \|\bfu - \bfu_h\|^2_{1, h} \|\bfphi\|_{1, \Omega} \le \mathcal{C} h^{2k + 2} \|e_{\bfu}\|_{\Omega}.
\]
Similarly, we can bound $T_{73}$ as 
\[
T_{73} \le \|\bfphi\|_{L^4(\Omega)} \|\bfu - \bfu_h\|_{L^4(\Omega)} \|\nabla (\bfu - \bfu_h)\|_{\calT_h} \le \mathcal{C} h^{2k+2} \|e_{\bfu}\|_{\Omega}.
\]
For $T_{74}, T_{75}$ we apply the generalized H\"{o}lder's inequality as
\begin{align*}
T_{74} & \le \|\bfu\|_{\infty, \Omega} \|\delta_{\bfu}\|_{\Omega} \|\nabla \bfphi\|_{\Omega} \le C \|\bfu\|_{\infty, \Omega} h^{k+2} \|e_{\bfu}\|_{\Omega}, \\
T_{75} & \le \|\nabla \bfu\|_{\infty, \Omega} \|\bfphi\|_{\Omega} \|\delta_{\bfu}\|_{\Omega} \le C \|\nabla \bfu\|_{\infty, \Omega} h^{k+2} \|e_{\bfu}\|_{\Omega}.
\end{align*}

The proof is complete by combining all the estimates for $T_1 - T_8$. 
\end{proof}

\section{Concluding remarks} In this paper, we introduced and analyzed a new HDG method for the Navier-Stokes equations. The work can be seen as a continuation of our previous work on HDG methods for linear problems, see \cite{QiuShi2015a, QiuShi2015b}. Comparing with the original HDG method for Navier-Stokes equation \cite{CCQ2015, NPCockburn2011}, our method uses an enriched polynomial space for the velocity in each element, a modified numerical flux and a modified HDG formulation. As a concequence, we obtained optimal order of convergence for all unknowns and superconvergence for the velocity without postprocessing. In addition, similar as in \cite{QiuShi2015a, QiuShi2015b}, the analysis in this paper is valid for general polygonal meshes. 

Numerical study of the method as well as other computational aspects including the characterization of the scheme, implementation of the method using Picard iteration and other related issues will be extensively discussed in a separate paper.

\providecommand{\bysame}{\leavevmode\hbox to3em{\hrulefill}\thinspace}
\providecommand{\MR}{\relax\ifhmode\unskip\space\fi MR }
% \MRhref is called by the amsart/book/proc definition of \MR.
\providecommand{\MRhref}[2]{%
  \href{http://www.ams.org/mathscinet-getitem?mr=#1}{#2}
}
\providecommand{\href}[2]{#2}

\end{document}